\documentclass[11pt, leqno]{article}
\textwidth=5in
\textheight=7.5in
\usepackage{amsmath,amsfonts,amsthm,amssymb,amscd}

\newtheorem{thm}{Theorem}[section]
\newtheorem{cor}{Corollary}[section]
\newtheorem{prop}{Proposition}[section]
\newtheorem{lem}{Lemma}[section]
\newtheorem{sublem}{Sublemma}[section]
\newtheorem{clm}{Claim}

\theoremstyle{definition}
\newtheorem{defi}{Definition}[section]
\newtheorem{as}{Assumption}[section]
\newtheorem{ex}[thm]{Example}

\theoremstyle{remark}
\newtheorem{rem}{Remark}[section]

\numberwithin{equation}{section}

\renewcommand{\t}{\tau}
\newcommand{\tilV}{\tilde{V}}
\newcommand{\tilNu}{\mathcal{\widetilde V}}
\newcommand{\vol}{{\rm Vol}}
\newcommand{\dert}[1]{\frac{\partial #1}{\partial t}}
\newcommand{\dertau}[1]{{\partial #1\over\partial\tau}}

\newcommand{\Ric}{{\rm Ric}}
\newcommand{\Rm}{{\rm Rm}}
\newcommand{\Hess}{{\rm Hess}\,}
\newcommand{\tr}{{\rm tr}}
\renewcommand{\L}{\mathcal{L}}
\renewcommand{\l}{\ell}
\renewcommand{\H}{\mathcal{H}}
\newcommand{\Z}{\mathbb{Z}^{+}}
\newcommand{\R}{\mathbb{R}}
\newcommand{\K}{\mathcal{K}}

\renewcommand{\[}{\Bigl[}
\renewcommand{\]}{\Bigr]}
\renewcommand{\(}{\Bigl(}
\renewcommand{\)}{\Bigr)}
\renewcommand{\|}{\Big|}

\renewcommand{\phi}{\varphi}
\renewcommand{\epsilon}{\varepsilon}

\newcommand{\bt}{{\bar{\tau}}}
\newcommand{\btt}{\bar{t}}
\newcommand{\bs}{\bar{\sigma}}
\newcommand{\bg}{\bar{g}}
\newcommand{\bp}{\bar{p}}
\newcommand{\bq}{\bar{q}}
\newcommand{\bM}{\bar{M}}

\begin{document}

\title{Perelman's reduced volume and a gap theorem for~the~Ricci flow}

\author{Takumi Yokota}

\date{}

\maketitle

\begin{abstract}
In this paper, we show that any ancient solution to the
Ricci flow with the reduced volume whose asymptotic limit
is sufficiently close to that of the Gaussian soliton is
isometric to the Euclidean space for all time. This is a
generalization of Anderson's result for Ricci-flat
manifolds. As a corollary, a gap theorem for gradient
shrinking Ricci solitons is also obtained.
\end{abstract}

\section{Introduction}\label{sec1}

Let us consider a smooth one-parameter family of Riemannian
metrics $g(t),\break t \in [0, T)$ on a manifold $M$. We call
$(M, g(t))$ a {\it Ricci flow} if it satisfies
\begin{equation}
\dert{}g = -2\Ric
\end{equation}
where $\Ric$ denotes the Ricci tensor of $g(t)$. We also
use $R:=\tr \Ric$ to denote the scalar curvature. The
purpose of the present paper is to show a gap theorem for
the Ricci flow. In order to state our main theorem, we
first recall a heuristic argument given
in~\cite[Section~6]{P}.

In his seminal paper~\cite{P}, Perelman introduced a comparison
geometric approach to the Ricci flow, called reduced geometry
in~\cite{Ni-Mean}. For a Ricci flow $(M^n, g(t)), t\in [0, T)$
with singular time $T$, take $T_0 < T$ and consider the backward
Ricci flow $g(\t)$, where $\t :=T_0-t \in [0, T_0]$ is the
reverse time. Equipping $\widetilde{M} :=M\times S^N\times (0,
T_0]$, for large $N\gg 1$, with a metric $\widetilde{g}$\break
written as
\begin{equation}
\widetilde{g} = g(\t) + \t g_{S^N} +\left(
R+\frac{N}{2\tau}\right)\,d\t^2
\end{equation}
Perelman observed that $(\widetilde{M}, \widetilde{g})$ has
vanishing Ricci curvature up to mod\,$N^{-1}$. Here, $(S^N,
g_{S^N})$ is the $N$-sphere with constant curvature
$\frac{1}{2N}$. An easy way to get a feeling of this is to
regard $\widetilde{g}$ as a cone metric by setting $\eta :=
\sqrt{2N\t}$. Recall that the metric cone $(N\times(0, T),
d\eta^2+\eta^2g_N)$ of $(N, g_N)$ is Ricci-flat if and only
if $\Ric_{g_N}= ({\rm dim}N-1)g_N$. Then he applied the
Bishop-Gromov inequality to $(\widetilde{M},
\widetilde{g})$ formally to obtain an invariant $\tilV_{(p,
0)}(\t)$ which he called the reduced volume. As expected,
the reduced volume is non-increasing in $\t$ (Theorem~2.1)
and his first application of this was the (re)proof of his
no local collapsing theorem~\cite[Section~7]{P}.

Throughout this paper, we adopt the convention that the
reduced volume is identically 1 for the Gaussian soliton.
The {\it Gaussian soliton} is the trivial Ricci flow
$(\R^n, g_{\rm E})$ on the Euclidean space regarded as a gradient
shrinking Ricci soliton $(\R^n, g_{\rm E}, \frac{|\,\cdot\,|^2}{4})$.

Now we state our main theorem of this paper.
\begin{thm}\label{main}
There exists $\epsilon_n>0$ which depends only on $n\ge2$
and\break satisfies the following: let $(M^n, g(\t)), \t\in [0,
\infty)$ be a complete ancient solution to the Ricci flow on an
$n$-manifold $M$ with Ricci curvature bounded below. Suppose
that the asymptotic limit of the reduced volume $\lim_{\t
\rightarrow \infty}\tilV_{(p, 0)}\break (\t)$ is greater than
$1-\epsilon_n$ for some $p \in M$. Then $(M^n, g(\t))$ is the
Gaussian soliton, i.e., isometric to the Euclidean space
$(\mathbb{R}^n, g_{\rm E})$ for all $\t\in [0, \infty)$.
\end{thm}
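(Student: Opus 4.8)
The plan is to exploit the rigidity case of the monotonicity formula for the reduced volume (Theorem~2.1). The first step is to recall that $\widetilde V_{(p,0)}(\tau)$ is nonincreasing in $\tau$ and that, by our normalization, it is always $\le 1$, with the asymptotic limit $\widetilde V_\infty := \lim_{\tau\to\infty}\widetilde V_{(p,0)}(\tau)$ existing. The hypothesis gives $\widetilde V_\infty > 1-\epsilon_n$, hence $\widetilde V_{(p,0)}(\tau) > 1-\epsilon_n$ for \emph{every} $\tau>0$. One then wants a quantitative almost-rigidity statement: if $\widetilde V_{(p,0)}(\tau_0)$ is close to $1$, then the pointed manifold $(M, g(\tau_0), p)$ is Gromov--Hausdorff close, on a controlled scale, to Euclidean space $(\R^n, g_{\rm E}, 0)$. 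This should follow from the same Bishop--Gromov-type comparison underlying Theorem~2.1 applied to Perelman's formal cone $(\widetilde M, \widetilde g)$: when the reduced-volume ratio is close to the Euclidean value, the $\L$-exponential map is almost volume-preserving, which by a standard Cheeger--Colding-type argument forces almost-metric-rigidity.

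The second step is to upgrade this sequence of almost-rigid slices to a genuine limit. Take a sequence $\tau_i \to \infty$ and rescale: set $g_i(s) := \tau_i^{-1} g(\tau_i s)$, a family of ancient Ricci flows with Ricci bounded below by a constant tending to $0$. The scale invariance of the reduced volume gives $\widetilde V^{g_i}_{(p,0)}(1) = \widetilde V^{g}_{(p,0)}(\tau_i) \to \widetilde V_\infty$. Using the lower Ricci bound together with the no-local-collapsing consequence of $\widetilde V$ being bounded below (Perelman's argument), one extracts a limit; the point is that in the limit the reduced volume is identically equal to $\widetilde V_\infty$ as a function of the backward time, so the monotonicity formula is saturated. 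The equality case of Theorem~2.1 (Perelman's rigidity, as in the heuristic that equality in Bishop--Gromov forces the cone to be flat) then identifies the limit flow as the Gaussian soliton, which forces $\widetilde V_\infty = 1$. Feeding $\widetilde V_\infty = 1$ back into the monotonicity $\widetilde V_{(p,0)}(\tau)\ge \widetilde V_\infty = 1 \ge \widetilde V_{(p,0)}(\tau)$ shows $\widetilde V_{(p,0)}(\tau)\equiv 1$ for all $\tau$, i.e.\ the monotonicity is saturated on the \emph{original} flow.

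The final step is to invoke the equality case directly on $(M, g(\tau))$: saturation of the reduced-volume monotonicity on an ancient solution forces it to be a gradient shrinking soliton whose reduced volume is that of the Gaussian soliton, and the classification of such solitons (the soliton equation $\Ric + \Hess f = \frac{1}{2\tau} g$ together with the volume-ratio equality) yields $\Ric \equiv 0$ and then, again by the equality in Bishop--Gromov for the asymptotic cone or by a direct computation with $f$, that $(M, g(\tau))$ is isometric to $(\R^n, g_{\rm E})$ for all $\tau$. I expect the main obstacle to be the first step --- making the passage from "reduced volume near its maximal value" to "quantitative metric closeness to $\R^n$" rigorous, since the reduced volume is defined through the $\L$-geometry of Perelman rather than through ordinary distance balls, so one needs the analogue of Colding's volume-convergence theorem in the reduced-geometry setting, controlling the $\L$-exponential map and the reduced distance $\ell$ well enough that GH-closeness follows; the extraction of limits in step two also requires care because only a \emph{lower} Ricci bound is assumed, so one must lean on Perelman's $\kappa$-noncollapsing rather than on two-sided curvature bounds.
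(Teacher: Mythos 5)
Your overall strategy — get $\widetilde{\mathcal V}(g)=1$, then invoke the equality case of Theorem~2.1 — is a sound endpoint, but the route you describe has two genuine gaps, and the paper takes a different and sharper path around them.

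The critical gap is in your limit extraction. Rescaling $g_i(s):=\tau_i^{-1}g(\tau_i s)$ and trying to pass to a smooth limit Ricci flow requires uniform two-sided curvature bounds (to invoke Hamilton's compactness theorem); a lower Ricci bound plus $\kappa$-noncollapsing does \emph{not} give this, since $\kappa$-noncollapsing only yields an injectivity radius lower bound \emph{assuming} a curvature upper bound. The paper's way around this is Perelman's point-picking lemma (Lemma~A.2): rather than blowing down around the original base point, one assumes for contradiction that the scale-invariant curvature bound $|\Rm|\le\alpha(T-\t)^{-1}$ fails, picks a point where curvature is locally controlled on a large parabolic neighborhood, and rescales \emph{there}. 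This is the content of Lemma~4.1, which is a scale-invariant $\epsilon$-regularity/pseudolocality-type statement: if the reduced volume based at all ``high-curvature'' points (at the corresponding curvature scale) is close to $1$, then there are no high-curvature points. After rescaling at the picked point, one has $|\Rm|\le2$ on a large ball, Hamilton compactness applies, the reduced volumes converge to $1$ by Lemma~3.5, and the limit is Gaussian by Theorem~2.1, contradicting $|\Rm|(p_\infty,0)=1$. Applying Lemma~4.1 for all $T$ shows the flow is flat; combined with Lemma~3.3 (finite fundamental group), the conclusion follows. Note the paper never needs to show $\widetilde{\mathcal V}(g)=1$ directly.

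Your ``almost-rigidity'' step (a Cheeger--Colding-type volume-to-GH-closeness argument in the reduced-geometry setting), which you flag as the main obstacle, is in fact not used: the paper sidesteps quantitative almost-rigidity entirely by a compactness/contradiction argument where the limit attains exact equality in the monotonicity formula. Also, a smaller issue: you cannot feed ``constant reduced volume'' into the rigidity case to conclude Gaussian unless you have also observed that $\widetilde V(\t)\to 1$ as $\t\to 0^+$ on the limit; without that, constant reduced volume only gives the shrinking-soliton equation along $\L$-geodesics, not flatness. The paper's Theorem~2.1 bundles this correctly (rigidity requires $\widetilde V(\bt)=1$, not merely constancy), but your write-up conflates the two.
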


We say that $(M, g(\t))$ is {\it ancient} when $g(\t)$
exists for all $\t \in[0, \infty)$. Ancient solutions are
important objects in the study of singularities of the
Ricci flow. The limit $\tilNu(g) :=
\lim_{\t\to\infty}\tilV_{(p, 0)}(\t)$ will be called the
{\it asymptotic reduced volume} of the flow $g(\t)$. We
will see in Lemma~3.1 below that $\tilNu(g)$ is independent
of the choice of $p\in M$.

By regarding a Ricci-flat metric as an ancient solution as in
Theorem \ref{main}, we recover the following result, which is
the motivation of the present\break paper.
\begin{thm}[{\rm \cite[Gap Lemma 3.1]{A}}]\label{gap}
There exists $\epsilon_n>0$ which satisfies the following:
let $(M^n, g)$ be an $n$-dimensional complete Ricci-flat
Riemannian manifold. Suppose that the asymptotic volume
ratio $\nu(g) := \lim_{r \to\infty}\break \vol\,B(p
,r)/\omega_n r^n$ of $g$ is greater than $1-\epsilon_n$.
Here $\omega_n$ stands for the volume of the unit ball in
the Euclidean space $(\mathbb{R}^n, g_{\rm E})$. Then $(M^n, g)$
is isometric to $(\mathbb{R}^n, g_{\rm E})$.
\end{thm}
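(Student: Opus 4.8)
The plan is to deduce Theorem~\ref{gap} from Theorem~\ref{main} by regarding a complete Ricci-flat manifold $(M^n,g)$ as the static Ricci flow $g(\t)\equiv g$, $\t\in[0,\infty)$, and by identifying its asymptotic reduced volume with its asymptotic volume ratio, i.e., proving $\tilNu(g)=\nu(g)$. First I would check that the static flow satisfies the hypotheses of Theorem~\ref{main}: it is a complete solution of the Ricci flow existing for all $\t\ge0$, hence ancient, and it has vanishing Ricci curvature, which is in particular bounded below.

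Next I would compute the reduced volume of the static flow. Since $R\equiv0$, after the change of variable $\sigma=2\sqrt{\bar\t}$ the $\L$-length of a spacetime curve issuing from $(p,0)$ equals the energy of its spatial trace; minimizing over curves therefore gives $\l(q,\t)=d(p,q)^2/4\t$, whence
\begin{equation*}
\tilV_{(p,0)}(\t)=(4\pi\t)^{-n/2}\int_M e^{-d(p,q)^2/4\t}\,dV(q).
\end{equation*}
Writing $\vol B(p,r)=\omega_n r^n\,\theta(r)$, integrating in the distance variable (coarea formula, then integration by parts, the boundary terms vanishing by Bishop--Gromov), and substituting $r=\sqrt\t\,\rho$, one obtains
\begin{equation*}
\tilV_{(p,0)}(\t)=c_n\int_0^\infty \rho^{n+1}e^{-\rho^2/4}\,\theta(\sqrt\t\,\rho)\,d\rho
\end{equation*}
for a constant $c_n$ depending only on $n$; evaluating this identity on $(\R^n,g_{\rm E})$, where $\theta\equiv1$ and $\tilV\equiv1$, pins down $c_n\int_0^\infty\rho^{n+1}e^{-\rho^2/4}\,d\rho=1$. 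By the Bishop--Gromov inequality, $r\mapsto\theta(r)$ is non-increasing with $\lim_{r\to0^+}\theta(r)=1$ and $\lim_{r\to\infty}\theta(r)=\nu(g)$; in particular $0\le\theta\le1$, so dominated convergence yields
\begin{equation*}
\tilNu(g)=\lim_{\t\to\infty}\tilV_{(p,0)}(\t)=\nu(g)\cdot c_n\int_0^\infty\rho^{n+1}e^{-\rho^2/4}\,d\rho=\nu(g).
\end{equation*}

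Finally, I would let $\epsilon_n$ be the constant provided by Theorem~\ref{main}: if $\nu(g)>1-\epsilon_n$, then $\tilNu(g)=\nu(g)>1-\epsilon_n$, and Theorem~\ref{main} applied to the static flow $g(\t)\equiv g$ shows that $(M^n,g)$ is isometric to $(\R^n,g_{\rm E})$. The deduction is short, and essentially all the content is in the identity $\tilNu(g)=\nu(g)$; the step I expect to require the most care is the formula $\l(q,\t)=d(p,q)^2/4\t$ (the bound ``$\le$'' from test curves, and ``$\ge$'' because any spacetime curve has $\L$-length at least the optimally reparametrized energy of its spatial projection), together with keeping all normalization constants consistent with the convention that the Gaussian soliton has reduced volume $1$.
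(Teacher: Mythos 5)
Your proposal takes essentially the same approach as the paper. The paper regards the Ricci-flat manifold as a stationary super Ricci flow and invokes Lemma~2.1 (citing [Chow et al., Lemma~8.10]) for the identity $\tilV_{(p,0)}(\t)=\int_M(4\pi\t)^{-n/2}e^{-d(p,q)^2/4\t}\,d\mu$ and $\tilNu(g)=\nu(g)$, then applies Theorem~\ref{main}; your proposal is the same deduction, simply supplying in detail the computation that the paper outsources to the cited lemma.
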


On the way to the proof of Theorem~\ref{main}, we establish
several lemmas. Here we state one of them as a theorem,
which is of independent interest.
\begin{thm}\label{finitefundaRicci}
Let $(M^n, g(\t)), \t\in[0, \infty)$ be a complete ancient
solution to the Ricci flow on $M$ with Ricci curvature bounded
below. If $\tilNu(g)>0$, then~the fundamental group of $M$ is
finite. In particular, any ancient $\kappa$-solution to the
Ricci flow has finite fundamental group.
\end{thm}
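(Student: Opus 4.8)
The plan is to pass to the universal cover $\pi\colon(\widetilde M,\widetilde g(\t))\to(M,g(\t))$ and exploit the monotonicity of the reduced volume together with the fact that the reduced volume of a $k$-fold cover is $k$ times the reduced volume downstairs (for finite covers) — more precisely, that the asymptotic reduced volume is multiplicative under covering maps in the sense that $\tilde{\mathcal V}(\widetilde g)\ge |\Gamma|\cdot\tilde{\mathcal V}(g)$ when $\Gamma=\pi_1(M)$ is finite, and that in general $\tilde{\mathcal V}(\widetilde g)$ controls the order of any finite subgroup of $\Gamma$. Since the reduced volume is normalized so that $\tilV\equiv 1$ for the Gaussian soliton, and since (by the heuristic Bishop–Gromov comparison underlying Perelman's construction) $\tilV_{(p,0)}(\t)\le 1$ always, the asymptotic reduced volume upstairs satisfies $\tilde{\mathcal V}(\widetilde g)\le 1$. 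Combined with $\tilde{\mathcal V}(\widetilde g)\ge |\Gamma|\,\tilde{\mathcal V}(g)$ and the hypothesis $\tilde{\mathcal V}(g)>0$, this forces $|\Gamma|\le 1/\tilde{\mathcal V}(g)<\infty$.

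Carrying this out, the first step is to check that $(\widetilde M,\widetilde g(\t))$ is again a complete ancient solution with Ricci curvature bounded below (these properties lift trivially along a local isometry), so that $\tilde{\mathcal V}(\widetilde g)$ is defined. The second and main step is the covering inequality for the reduced volume. Recall $\tilV_{(p,0)}(\t)=(4\pi\t)^{-n/2}\int_M e^{-\ell_{(p,0)}(q,\t)}\,dV_{g(\t)}(q)$, where $\ell$ is Perelman's reduced distance based at the spacetime point $(p,0)$. Fix a lift $\tilde p$ of $p$. Because $\pi$ is a local isometry and minimizing $\L$-geodesics project to $\L$-geodesics, one has $\ell_{(\tilde p,0)}(\tilde q,\t)\ge \ell_{(p,0)}(\pi(\tilde q),\t)$ for every $\tilde q\in\widetilde M$, with equality along at least one lift in each fiber; hence $\int_{\widetilde M}e^{-\ell_{(\tilde p,0)}}\,d\widetilde V\ge |\Gamma|\int_M e^{-\ell_{(p,0)}}\,dV$ when $\Gamma$ is finite (and $\ge k\int_M e^{-\ell_{(p,0)}}\,dV$ after passing to any $k$-fold intermediate cover in general), giving $\tilV_{(\tilde p,0)}(\t)\ge |\Gamma|\,\tilV_{(p,0)}(\t)$. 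Letting $\t\to\infty$ yields $\tilde{\mathcal V}(\widetilde g)\ge |\Gamma|\,\tilde{\mathcal V}(g)$. Third, invoke the elementary bound $\tilde{\mathcal V}(\widetilde g)\le 1$ (monotonicity in $\t$ from Theorem~2.1 together with the normalization; one also needs that the asymptotic reduced volume is well-defined and finite, which follows from the Ricci lower bound as in the cited works) to conclude $|\Gamma|<\infty$. For the last sentence, recall that an ancient $\kappa$-solution has nonnegative curvature operator and is $\kappa$-noncollapsed, which in particular gives $\tilde{\mathcal V}(g)>0$ (a $\kappa$-noncollapsing lower bound on the reduced volume), so the general statement applies.

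The main obstacle I anticipate is making the covering inequality for the reduced distance fully rigorous: one must argue that every minimizing $\L$-geodesic from $(p,0)$ to $(q,\t)$ lifts to a minimizing $\L$-geodesic from $(\tilde p,0)$ to exactly one preimage $\tilde q$ in the fiber over $q$, and that for each of the $|\Gamma|$ points of the fiber the reduced distance from $\tilde p$ is no larger than $\ell_{(p,0)}(q,\t)$ — the subtle point being the behavior near $\t=0$ where $\L$-geodesics emanate from the single point $p$, and the uniqueness/measurability of minimizers. This is handled by working with the $\L$-exponential map and the fact that the cut locus has measure zero, but it requires care. A secondary technical point is justifying that $\tilde{\mathcal V}$ is finite and $\le 1$ in the generality of Ricci-lower-bounded ancient solutions (not just $\kappa$-solutions), which is presumably established in one of the earlier lemmas of the paper and can be quoted.
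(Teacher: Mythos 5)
You have correctly identified the overall architecture of the argument — pass to the universal cover $(\bar M,\bar g)$, establish $\tilNu(\bar g)\ge|\Gamma|\,\tilNu(g)$ for finite subsets $\Gamma\subset\pi_1(M)$, and combine this with $\tilNu(\bar g)\le 1$ and $\tilNu(g)>0$ to bound $|\pi_1(M)|$. However, the reasoning you give for the central inequality is wrong, and in fact points in the opposite direction.

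The projection inequality $\ell_{(\bar p,0)}(\bar q,\t)\ge\ell_{(p,0)}(\pi(\bar q),\t)$ (with equality on at least one lift in each fiber) yields
\[
\int_{\bar M}e^{-\ell_{(\bar p,0)}}\,d\mu_{\bar g}\;\le\;\int_{\bar M}e^{-\ell_{(p,0)}\circ\pi}\,d\mu_{\bar g}\;=\;|\Gamma|\int_M e^{-\ell_{(p,0)}}\,d\mu_g
\]
when $\Gamma$ is finite, i.e.\ $\tilV^{\bar g}_{(\bar p,0)}(\t)\le|\Gamma|\,\tilV^g_{(p,0)}(\t)$. This is exactly the \emph{easy} direction, which the paper uses to obtain the upper bound $\tilNu(\bar g)\le|\pi_1(M)|\,\tilNu(g)$ and hence the equality in Lemma~3.2. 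Having equality on one lift per fiber only gives $\tilV^{\bar g}\ge\tilV^g$, a factor of $1$, not $|\Gamma|$; the inequality on the remaining lifts goes the wrong way for your purpose. So the claimed covering inequality does not follow from the stated facts, and the obstacle you flag near $\t=0$ is not a matter of polishing the argument — the argument, as given, proves nothing in the direction you need.

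The genuine content of the paper's proof (Lemma~3.2) is different. One fixes a large $\bt$, introduces the $\L$-Dirichlet fundamental domain $F:=\bigcap_{\alpha\neq e}\{\bq : L^{\bar g}_{(\bp,0)}(\bq,\bt)<L^{\bar g}_{(\alpha\bp,0)}(\bq,\bt)\}$, and then shows that for $\bq$ in the translate $\alpha\bar F$ the reduced distance from $\bp$ is close to the reduced distance from $\alpha\bp$ (which projects to $\ell^g_{(p,0)}$): one concatenates a short $\bg(0)$-geodesic on $[0,\delta]$ from $\bp$ to $\bar\gamma(\delta)$ with the minimal $\L$-geodesic $\bar\gamma$ from $\alpha\bp$, getting $\ell^{\bar g}_{(\bp,0)}(\bq,\bt)\le\ell^{\bar g}_{(\alpha\bp,0)}(\bq,\bt)+C(\delta,\Gamma)\bt^{-1/2}$. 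Summing over $\alpha\in\Gamma$ and invoking Proposition~3.1 to control the contribution of the $\L$-balls $\L B_\bt(\alpha\bp,r)\cap\alpha\bar F$, and then letting $\bt\to\infty$ (so that the $\bt^{-1/2}$ error disappears) and $r\to\infty$, yields $\tilNu(\bar g)\ge|\Gamma|\,\tilNu(g)$. This comparison-curve construction and the $\bt^{-1/2}$ decay of the error are the essential ideas your proposal is missing.
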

More generally, Theorem~\ref{finitefundaRicci} is shown for
super Ricci flows in Lemma~\ref{funda} under certain
assumptions. See also Remark~\ref{rem:funda} below for
application.

Finally, we apply the theorems above to gradient shrinkers.
We call a triple $(M, g, f)$ a {\it gradient shrinking Ricci soliton} when
\begin{equation*}
\Ric+ \Hess\, f -\frac{1}{2\lambda}g=0
\end{equation*}
holds for some positive constant $\lambda>0$. Shrinking
Ricci solitons are typical examples of ancient solutions to
the Ricci flow. We normalize the potential function $f \in
C^\infty(M)$ by adding a constant so that
\begin{equation}\label{fnormalize}
R+ |\nabla f|^2 -\frac{f}{\lambda}=0\quad \hbox{on } M.
\end{equation}
The left-hand side of $(\ref{fnormalize})$ is known to be
constant~\cite[Proposition 1.15]{V2}.

\begin{cor}\label{main3}
Let $(M^n, g, f)$ be a complete gradient shrinking Ricci
soliton with Ricci curvature bounded below. Then
\begin{enumerate}
\item[$(1)$]the fundamental group of $M$ is finite and
\item[$(2)$]the normalized $f$-volume $\int_M(4\pi\lambda)^{-n/2}{\rm e}^{-f}\,d\mu_g$ does not exceed $1$.
\item[$(3)$]Suppose that
\begin{equation*}
\int_M(4\pi\lambda)^{-n/2}e^{-f}d\mu_g >
1-\epsilon_n,
\end{equation*}
then $(M^n, g, f)$ is, up to scaling, the Gaussian soliton
$(\R^n, g_{\rm E}, \frac{|\,\cdot\,|^2}{4})$. Here the
constant $\epsilon_n$ comes from Theorem~$\ref{main}$.
\end{enumerate}
\end{cor}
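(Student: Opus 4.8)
The plan is to realize the gradient shrinking soliton as a self-similar ancient Ricci flow, to identify its asymptotic reduced volume with the normalized $f$-volume, and then to deduce $(1)$--$(3)$ from Theorems~\ref{main} and \ref{finitefundaRicci} together with the monotonicity of the reduced volume (Theorem~2.1). First note that all three assertions are unchanged when $g$ is replaced by $cg$, $c>0$: the normalized $f$-volume $\int_M(4\pi\lambda)^{-n/2}{\rm e}^{-f}\,d\mu_g$ is scale invariant (one has $\lambda\mapsto c\lambda$, $d\mu_g\mapsto c^{n/2}\,d\mu_g$, and $f$ unchanged), while finiteness of $\pi_1(M)$ and ``being the Gaussian soliton up to scaling'' plainly are. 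So we may assume $\lambda=1$, i.e. $\Ric+\Hess f=\frac{1}{2}g$ and $R+|\nabla f|^2=f$. Let $\bg(\t)$, $\t\in[0,\infty)$, be the canonical self-similar backward Ricci flow generated by $(M,g,f)$, so that $\bg(\t)=\sigma(\t)\,\psi_\t^{*}g$ for an affine increasing function $\sigma$ with $\sigma(0)=1$ and diffeomorphisms $\psi_\t$ ($\psi_0=\mathrm{id}$) coming from the gradient flow of $f$; this is a complete ancient solution. Since Ricci curvature is invariant under scaling, $\Ric_{\bg(\t)}=\psi_\t^{*}\Ric_g$, so the hypothesis $\Ric_g\ge -Kg$ together with $\sigma(\t)\ge1$ gives $\Ric_{\bg(\t)}\ge -K\bg(\t)$ for all $\t\ge0$. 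Hence $(M,\bg(\t))$ satisfies the hypotheses of Theorems~\ref{main} and \ref{finitefundaRicci}.

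The heart of the matter is the identity
\begin{equation*}
\tilNu(\bg)=\int_M(4\pi\lambda)^{-n/2}{\rm e}^{-f}\,d\mu_g .
\end{equation*}
By Lemma~3.1 the left-hand side does not depend on the basepoint, so to prove the identity I would compute it taking the basepoint $p_0$ to be a minimum point of $f$; such a point exists since the potential function of a complete shrinking soliton is proper and grows quadratically at infinity (which in particular makes the right-hand side finite), and the right-hand side is strictly positive as the integral of a positive function. The point $p_0$ is fixed by the flow $\{\psi_\t\}$ (as $\nabla f(p_0)=0$), and for the self-similar solution the $\l$-geodesics emanating from $(p_0,0)$, the reduced distance $\l_{(p_0,0)}(\,\cdot\,,\t)$, and the corresponding $\l$-Jacobian can be expressed, after an affine reparametrization of time and a parabolic rescaling, in terms of the flow of $\nabla f$ and the function $f$; inserting these into Perelman's integral formula for $\tilV_{(p_0,0)}(\t)$ and letting $\t\to\infty$ gives the asserted value. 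Carrying this out rigorously (in particular the asymptotic analysis of $\l$ and the justification of the passage to the limit on a possibly noncompact shrinker) is the main technical obstacle; the rest of the corollary is then formal. This identification is standard in reduced geometry and produces precisely the Gaussian density of the shrinking soliton.

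Granting the identity, the three assertions follow at once. For $(1)$, $\tilNu(\bg)>0$, so Theorem~\ref{finitefundaRicci} shows that $\pi_1(M)$ is finite. For $(2)$, $\tilV_{(p_0,0)}(\t)$ is non-increasing in $\t$ by Theorem~2.1 and tends to $1$ as $\t\to0^{+}$ (the standard short-time behavior of the reduced volume, valid since $\bg(\t)$ is smooth and complete up to $\t=0$), so $\int_M(4\pi\lambda)^{-n/2}{\rm e}^{-f}\,d\mu_g=\tilNu(\bg)\le1$. For $(3)$, if this integral exceeds $1-\epsilon_n$ then $\tilNu(\bg)>1-\epsilon_n$, so Theorem~\ref{main} forces $(M,\bg(\t))$ to be isometric to $(\R^n,g_{\rm E})$ for every $\t$; taking $\t=0$, where $\bg(0)=g$, we find that $(M,g)$ is flat Euclidean space, and reinstating the original scale we conclude that $(M,g,f)$ is, up to scaling, the Gaussian soliton $(\R^n,g_{\rm E},\frac{|\,\cdot\,|^2}{4})$.
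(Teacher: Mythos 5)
Your plan is sound and the deduction of $(1)$--$(3)$ from Theorems~\ref{main}, \ref{finitefundaRicci} and the monotonicity of the reduced volume is exactly right, but you have left a genuine gap precisely at the step you flag as ``the main technical obstacle'': you never actually establish the identification $\tilNu(\bg)=\int_M(4\pi\lambda)^{-n/2}{\rm e}^{-f}\,d\mu_g$. That identity is \emph{not} routine here, for the reasons the paper explicitly points out: the self-similar flow $g_0(\t)=\frac{\t}{\lambda}\psi_\t^*g$ has a singular time at $\t=0$, so the monotone quantity $\tilV^{g_0}_{(p,0)}$ is not even defined; one must work with a shifted flow $g_1(\t)=g_0(\t+1)$, and then the formal constancy of the reduced volume along the self-similar flow no longer applies at the shifted basepoint. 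Your proposal to compute $\l$ and the $\L$-Jacobian explicitly and ``let $\t\to\infty$'' is the content of the heuristic paragraph in the paper's proof, and the paper immediately warns that this heuristic cannot be made to work directly.

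What the paper does instead is cleverer and lighter: it proves only the one-sided inequality $\tilNu(g_1)\ge\int_M(4\pi\lambda)^{-n/2}{\rm e}^{-f}\,d\mu_g$, which is all that is needed for $(1)$--$(3)$. For $(1)$ the right-hand side is positive, giving $\tilNu(g_1)>0$; for $(2)$ the left-hand side is $\le 1$ by Theorem~2.1, giving the desired bound on the $f$-volume; and for $(3)$ the hypothesis forces $\tilNu(g_1)>1-\epsilon_n$ so that Theorem~\ref{main} applies. The inequality itself is established by splitting a minimizing $\L^{g_1}$-geodesic at a fixed intermediate time, applying the $\L$-triangle inequality, the time-shift Sublemma~\ref{timeshift}, the explicit formula for $L^{g_0}$ along the self-similar trajectories (equation (\ref{shrinkdist})), and the non-negativity of scalar curvature for ancient flows (Proposition~\ref{ancient}), then letting the exhausting compact set $\K$ and the time $\bt$ go to infinity. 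The full equality is genuinely harder (it requires in addition the finiteness of $\int_M{\rm e}^{-\alpha_if}\,d\mu_g$ via a Bakry--Emery argument, and a limiting procedure as $\alpha_i\nearrow1$) and is deferred to Proposition~\ref{fvolredvol}, which is proved \emph{after} the corollary and not used in its proof. So your overall scheme is correct, but as written it rests on an unproved identity that the paper deliberately avoids needing; to close the gap you should either supply the argument of Proposition~\ref{fvolredvol}, or, better, notice that only the inequality $\tilNu(g_1)\ge\int_M(4\pi\lambda)^{-n/2}{\rm e}^{-f}\,d\mu_g$ is required and prove that instead.
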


Part (1) of Corollary~\ref{main3} is a restatement of the
result obtained by many people in more general context
(cf.~\cite{W}). The other statements in
Corollary~\ref{main3} are intimately related to the results
of Carrillo--Ni~\cite{CN}. In particular,
Corollary~1.2.(3) proves their conjecture that {\it
the normalized $f$-volume is $1$ only for the Gaussian
soliton}~\cite{CN}. See Remark~\ref{finalrem} below.

The paper is organized as follows. In Section 2, we review
definitions and Perelman's results in~\cite{P}. We will do
this for super Ricci flows. In Section~3, we prove some
lemmas required in the proof of the main theorem. In
Section 4, we give a proof of Theorem~\ref{main}. In
Section 5, we prove Corollary~\ref{main3} and consider
expanding solitons with non-negative Ricci curvature. The
final section contains some remarks. Appendix A is devoted
to detailed proofs of the facts used in the argument
without proof.

\section{Comparison geometry of super Ricci flows}

\subsection{Super Ricci flow}

In this section, we recall the
definitions and results in \cite[Sections~6 and 7]{P}. The
main references are \cite{P,Ye, KL, V2}. Among them,
Ye~\cite{Ye} paid careful attention to argue under the
assumption of Ricci curvature bounded below rather than
bounded sectional curvature (see also
\cite[Appendix]{EKNT}). The assumption of Theorem
\ref{main} on the Ricci flow $(M^n, g(\t))$ is the same as
that considered in~\cite{Ye}. We mainly adopt the notation
of~\cite{V2}.

We would like to develop Perelman's reduced geometry in
more general situation, that is, the super Ricci flow. This
will provide us with a convenient setting for comparison
geometry of the Ricci flow. A smooth one-parameter family
of Riemannian metrics $(M, g(\t)), \t\in [0, T)$ is called
a {\it super Ricci flow} when it satisfies
\begin{equation}
\dertau{}g \le 2\,\Ric.
\end{equation}
Super Ricci flow was introduced by
McCann--Topping~\cite{MT} in their attempt to generalize
the contraction property of heat equation in the
Wasserstein spaces, which characterizes the non-negativity
of the Ricci curvature of the Riemannian metrics
(see~\cite{RS}), to time-depending metrics. See
also~\cite{Lo} for this topic.

Basic and important examples of super Ricci flows are
\begin{ex}\label{ex}
\begin{enumerate}
\item[(1)]A solution to the backward Ricci flow equation $\dertau{}g=2\,\Ric$ and
\item[(2)]$g(\t) := (1+2C\t)g_0, \t \in[0, \frac{1}{|C|-C})$ for some fixed Riemannian metric\vspace*{3pt} $g_0$ with Ricci curvature bounded from below by $C \in \R$.
\end{enumerate}
\end{ex}
Therefore, it can be said that the study of super Ricci
flows includes those of (backward) Ricci flows and
manifolds with Ricci curvature bounded from below.

We can straightforwardly generalize Perelman's reduced geometry to the super Ricci flow
if we impose the following assumptions.
\begin{as}\label{as}
Putting $2h := \dertau{}g$ and $H := \tr_{g(\t)} h$, $h$ satisfies
\begin{enumerate}
\item[(1)]contracted second Bianchi identity $2\,{\rm div}\,h(\cdot) = \langle \nabla H, \cdot\rangle$ and
\item[(2)]heat-like equation $-\tr_{g(\t)}\dertau{}h \ge \varDelta_{g(\t)} H$, or equivalently,
\begin{equation}\label{evol}
-\dertau{}H \ge \varDelta_{g(\t)} H + 2|h|^2.
\end{equation}
\end{enumerate}
\end{as}
Clearly, the ones in Example~\ref{ex} above satisfy
Assumption \ref{as}. It is known that the evolution
equation for the scalar curvature $R$ under the Ricci flow
$g(\t)$ is given by $-\dertau{}R= \varDelta_{g(\t)} R +
2|\Ric|^2.$

In what follows, we denote by $(M, g(\t)), \t\in [0, T)$ a
complete super, or backward Ricci flow on an $n$-manifold
$M$ satisfying Assumption~\ref{as}. It is also assumed that
the time-derivative $\dertau{}g$ is bounded from below in
each compact time interval, that is, for any compact
interval $[\t_1, \t_2] \subset [0, T)$, we can find
$K=K(\t_1, \t_2) \ge 0$ such that $-Kg(\t) \le \dertau{}g
\le 2\,\Ric_{g(\t)}$ and hence
\begin{equation*}
\hbox{e}^{K(\t_2-\t)}g(\t_2) \ge g(\t) \ge
\hbox{e}^{-K(\t-\t_1)}g(\t_1)
\end{equation*}
for all $\t\kern-1pt \in\kern-1pt [\t_1, \t_2]$. Although Assumption~\ref{as}
looks too restrictive, the author's intention is a unified
treatment of backward Ricci flows and Riemannian manifolds
with non-negative Ricci curvature. (See also
Remark~\ref{rem:entropy}~below.)

\subsection{Definition of the reduced volume}

Let us start with the definitions.
Fix $p \in M$, $[\t_1, \t_2] \subset [0, T)$ and\break $\bt \in
(0, T)$.
\begin{defi}
Let $\gamma : [\t_1, \t_2] \rightarrow M$ be a curve.
We define the {\it $\L$-length} of $\gamma$ and the {\it $\L$-distance}, respectively, by
\begin{equation*}
\L(\gamma) := \int_{\t_1}^{\t_2} \sqrt\tau \( \|\frac{d\gamma}{d\t}\|^2_{g(\t)} + H(\gamma(\t), \t)\) d\t
\end{equation*}
and
\begin{equation*}
L_{(p, \t_1)}(q, \t_2) := \inf \bigl\{\L(\gamma) ;\ \gamma
: [\t_1, \t_2]\rightarrow M \hbox{ with } \gamma(\t_1)=p,
\gamma(\t_2)=q \bigr\}.
\end{equation*}
\end{defi}

The lower bound of $\dertau{}g$ guarantees that the
$\L$-distance between any two points is achieved by a
minimal $\L$-geodesic. This is the only place where we
employ the assumption on $\dertau{}g$. A curve $\gamma(\t)$
is called an {\it $\L$-geodesic} when
\begin{equation}\label{geod}
2\nabla_X X + \frac{X}{\t} - \nabla H + 4h(X, \cdot) =0, \quad X:=\frac{d\gamma}{d\t}(\t)
\end{equation}
is satisfied.

Then the {\it reduced distance} and the {\it reduced
volume} based at $(p, 0)$ are defined, respectively, by
\begin{equation*}
\l_{(p, 0)}(q, \bt):= \frac{1}{2\sqrt{\bt}} L_{(p, 0)}(q, \bt)
\end{equation*}
and
\begin{equation*}
\tilV_{(p, 0)}(\bt):= \int_M
(4\pi\bt)^{-n/2}\hbox{e}^{-\l_{(p, 0)} (q,
\bt)}d\mu_{g(\bt)}(q)
\end{equation*}
where $d\mu_{g(\bt)}$ denotes the volume element induced by
$g(\bt)$.

We can rewrite the reduced volume as
\begin{align}\label{eq:reducedpullback}
\tilV_{(p, 0)}(\bt) = \int_{T_pM}
(4\pi\bt)^{-n/2}\exp\(-\l_{(p, 0)}(\L\,\exp_{\bt}(V),\,
\bt)\)\L J_V(\bt)dx_{g(0)}(V)
\end{align}
\looseness=-1 by pulling back the integrand by the {\it
$\L$-exponential map} $\L\,\exp_{\bt} : T_pM \to M$ which
assigns $\gamma_V(\bt)$, if exists, to each $V \in T_pM$. Here
$\gamma_V$ is the $\L$-geodesic determined by $\gamma_V(0)=p$
and $\lim_{\t \to 0+}\sqrt\t \frac{d\gamma}{d\t}(\t)=V$. In
(\ref{eq:reducedpullback}), $dx_{g(0)}$ denotes the Lebesgue
measure on $T_pM$ induced by the metric $g(0)$ and $\L J_V(\bt)$
is called the {\it $\L$-Jacobian}. Remember that we are using
the convention that $\L J_V(\bt)=0$ unless $V\in \Omega_{(p,
0)}(\bt)$. By $V \in \Omega_{(p, 0)}(\bt)$, we mean that
$\L\,\exp_{\bt}(V)$ exists and lies outside the {\it $\L$-cut
locus} at time $\bt$. It follows that $\Omega_{(p, 0)}(\bt)$ is
an open set of $T_pM$, on which $\L\,\exp_{\bt}$ is a
diffeomorphism, and that $\Omega_{(p, 0)}(\t_2) \subset
\Omega_{(p, 0)}(\t_1)$ for $\t_2 > \t_1>0$. The base point $(p,
0)$ will often be suppressed.\vskip6pt

\subsection{Monotonicity of the reduced volume}

Next, we recall the
computations performed in \cite[Section~7]{P}.\vskip6pt

Let $q \in \L\,\exp_{\bt}(\Omega_{(p, 0)}(\bt))$ and $\gamma :
[0, \bt] \rightarrow M$ be the unique minimal\break \hbox{$\L$-geodesic} from
$(p, 0)$ to $(q, \bt)$. Take a tangent vector $Y \in T_qM$ and
extend it to the vector field along $\gamma$ by
solving\vspace*{-3pt}
\begin{equation*}
\nabla_X Y = -h(Y, \cdot) + \frac{Y}{2\t}, \qquad
Y(\bt)=Y\vspace*{-3pt}
\end{equation*}
so that $|Y|^2(\t) = \frac{\t}{\bt}|Y|^2$.

Then we have that $\nabla \l(q, \bt) = \frac{d\gamma}{d\t}(\bt)$
and\vspace*{-3pt}
\begin{align}
\dertau{}\l(q, \bt) &= H(q, \bt) -\frac{\l(q, \bt)}{\bt} + \frac{1}{2\bt^{3/2}}K\nonumber\\
|\nabla \l|^2(q, \bt) &= -H(q, \bt) +\frac{\l(q, \bt)}{\bt} -\frac{1}{\bt^{3/2}}K\label{Hess}\\
\Hess\,\l(Y, Y)(q, \bt) &\le -h(Y, Y) +
\frac{|Y|_{g(\bt)}^2}{2\bt} - \frac{1}{2\sqrt{\bt}}
\int_0^{\bt} \sqrt{\t} \H(X, Y)\,d\t\nonumber\\
\Delta \l(q, \bt) &\le -H(q, \bt)+ \frac{n}{2\bt} - \frac{1}{2\bt^{3/2}}K\label{Laplace}\\
\dertau{}\log\,\L J_V(\bt) &= \Delta \l(q, \bt) +H(q, \bt)
\le \frac{n}{2\bt} -
\frac{1}{2\bt^{3/2}}K.\nonumber\vspace*{-3pt}
\end{align}
Here, following \cite[Section~7]{P}, we have put
\begin{align*}
\H(X) &:= -\dertau{H} -\frac{H}{\tau} -2\langle\nabla H, X\rangle +2h(X, X)\\
K &:= \int_0^{\bt} \t^{3/2}\H(X)d\t\\
\H(X, Y) &:= - \langle\nabla_Y\nabla H, Y\rangle +2\langle R(X, Y)Y, X\rangle +4\nabla_Yh(X, Y) -4\nabla_Xh(Y, Y)\\
&\quad\hphantom{:}-2\dertau{h}(Y, Y) +2|h(Y, \cdot)|^2 -
\frac{1}{\t}h(Y, Y).
\end{align*}

The point where we have used Assumption~\ref{as} is the
derivation of (\ref{Laplace}) from (\ref{Hess})
(cf.~\cite[Lemma 7.42)]{V2}):
\begin{align*}
&\tr\,\H(X, \cdot)\\
&\quad= -\Delta H +2\,\Ric(X, X)+4\,{\rm div}\,h(X)-4\langle\nabla H, X\rangle
-2\dertau{H}-2|h|^2 -\frac{H}{\t}\\
&\quad= \H(X) + 2\[\Ric(X, X)-h(X, X)\]+\[-\dertau{H}-\varDelta H-2|h|^2\]\\ &\qquad + 2\[2\,{\rm div}\,h(X)- \langle\nabla H, X\rangle\]\\ &\quad\ge \H(X).
\end{align*}
The quantities corresponding to $\H(X)$ and $\tr\,\H(X,
\cdot)$ appear in~\cite[(1.2)]{ChCh} and \cite[(1.4)]{ChCh}
as the trace Harnack expressions of Hamilton~\cite{Ha-Ha}
and Chow--Hamilton \cite{CH}, respectively.

We now state the main theorem of this section (cf. [8, 25, 29]).
\setcounter{thm}{0}
\begin{thm}\label{reduced}
Let $(M^n, g(\t)), \t \in [0, T)$ be a complete super Ricci
flow satisfying~Assumption $\ref{as}$ with time derivative
bounded below. Then for any $p\in M$ and $V\in T_pM$,
\begin{equation}\label{integrand}
(4\pi\t)^{-n/2}{\rm e}^{-\l_{(p, 0)}(\gamma_V(\t),\, \t)}\L J_V(\t)
\end{equation}
is non-increasing in $\t$ and
\begin{equation*}
\lim_{\t\to 0+} \[(4\pi\t)^{-n/2}e^{-\l_{(p,
0)}(\gamma_V(\t),\, \t)}\L J_V(\t)\] =
\pi^{-n/2}e^{-|V|_{g(0)}^2}.
\end{equation*}
Moreover, $(\ref{integrand})$ is constant on $(0, \bt]$ if
and only if the shrinking soliton equation:
\begin{equation}\label{soliton}
\[ \frac{1}{2}\dertau{g} + \Hess\,\l_{(p, 0)} -\frac{1}{2\t}g \](\gamma_V(\t), \t) =0
\end{equation}
holds along the $\L$-geodesic $\gamma_V(\t)$ for $\t \in (0, \bt]$.

Hence, $\tilV_{(p, 0)}(\t)$ is non-increasing in $\t$,
$\lim_{\t\to 0+} \tilV_{(p, 0)}(\t) =1$ and hence
$\tilV_{(p, 0)}(\t) \le 1$. Moreover, $\tilV_{(p,
0)}(\bt)=1$ for some $\bt>0$ if and only if $(M^n,\break
g(\t)), \t \in[0, \bt]$ is the Gaussian soliton.
\end{thm}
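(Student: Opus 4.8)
My plan is to follow Perelman's computation from \cite[Section~7]{P} (in the streamlined form of \cite{V2,KL,Ye}), isolating the two spots where the super Ricci flow hypothesis and Assumption~\ref{as} enter, and then to treat the equality cases. Write $\Theta(V,\t)$ for the quantity in $(\ref{integrand})$, with the convention $\Theta(V,\t)=0$ when $V\notin\Omega_{(p,0)}(\t)$. Along $\gamma_V$ one has $\nabla\l=X$, so $\frac{d}{d\t}\l(\gamma_V(\t),\t)=\dertau{}\l+|\nabla\l|^2=-K/(2\t^{3/2})$ by the first two lines of $(\ref{Hess})$; combined with $\dertau{}\log\L J_V=\Delta\l+H$ this gives
\begin{equation*}
\frac{d}{d\t}\log\Theta(V,\t)=-\frac{n}{2\t}+\frac{K}{2\t^{3/2}}+\Delta\l+H\le 0
\end{equation*}
by $(\ref{Laplace})$. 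More precisely $-\frac{d}{d\t}\log\Theta$ is a sum of two non-negative terms: the excess in the second-variation bound for $\Hess\,\l$ along the particular field $Y(\t)$, and $\frac{1}{2\sqrt\t}\int_0^\t\sqrt s\,[\tr\,\H(X,\cdot)-\H(X)]\,ds$, where $\tr\,\H(X,\cdot)-\H(X)=2[\Ric(X,X)-h(X,X)]+[-\dertau{}H-\Delta H-2|h|^2]\ge 0$ by $h\le\Ric$ and $(\ref{evol})$, exactly as in the display before the theorem. For the $\t\to0^+$ behaviour I would use the standard asymptotics $\L\,\exp_\t(V)=\exp_p^{g(0)}(2\sqrt\t\,V)+O(\t)$, which give $\l(\gamma_V(\t),\t)\to|V|_{g(0)}^2$ and $\L J_V(\t)=(2\sqrt\t)^n(1+o(1))$, hence $\Theta(V,\t)\to\pi^{-n/2}e^{-|V|_{g(0)}^2}$. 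Since $\Theta(V,\cdot)$ is non-increasing, $0\le\Theta(V,\t)\le\pi^{-n/2}e^{-|V|_{g(0)}^2}$, so dominated convergence in $(\ref{eq:reducedpullback})$ yields $\lim_{\t\to0^+}\tilV_{(p,0)}(\t)=\int_{T_pM}\pi^{-n/2}e^{-|V|^2}dx_{g(0)}(V)=1$, and monotonicity of the integral then forces $\tilV_{(p,0)}(\t)\le 1$.

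Next, the equality case. If $\Theta(V,\cdot)$ is constant on $(0,\bt]$, both non-negative terms above vanish for all $\t$. Vanishing of the second-variation excess means the field $Y(\t)$ is the genuine variation field of the $\L$-geodesic family; substituting this back into the second-variation identity and using the $\L$-geodesic equation $(\ref{geod})$ produces, after the usual manipulation, precisely $(\ref{soliton})$ along $\gamma_V$, and conversely $(\ref{soliton})$ makes the computation run in reverse. I would also record that vanishing of the second term forces $\H(X)\equiv0$ along $\gamma_V$, hence $K\equiv0$ and $\l(\gamma_V(\t),\t)\equiv|V|_{g(0)}^2$ on $(0,\bt]$.

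Finally, rigidity. If $\tilV_{(p,0)}(\bt)=1$ then, $\tilV_{(p,0)}$ being non-increasing with limit $1$ as $\t\to0^+$, it equals $1$ on $(0,\bt]$; together with $\Theta(V,\t)\le\pi^{-n/2}e^{-|V|^2}$ and equality of the integrals this forces $\Theta(V,\t)=\pi^{-n/2}e^{-|V|^2}$ for a.e.\ $V$ and every $\t\in(0,\bt]$. Hence $\Omega_{(p,0)}(\bt)$ has full measure, $\L\,\exp_\bt$ is a diffeomorphism onto a dense open set $U\subset M$, and by continuity $\Theta(V,\cdot)$ is constant on $(0,\bt]$ for every $V\in\Omega_{(p,0)}(\bt)$, so $(\ref{soliton})$ holds along all such $\gamma_V$ while $\Ric(X,X)=h(X,X)$ for the velocities $X$, which fill an open set of directions at each point of $U$. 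Polarizing gives $\Ric=h$ on $U$, hence on $M$ by smoothness, so $g(\t)$ is a genuine backward Ricci flow and $(\ref{soliton})$ reads $\Ric+\Hess\,\l-\frac{1}{2\t}g=0$ on $M\times(0,\bt]$. Each time-slice is then a complete gradient shrinking Ricci soliton, so $R\ge0$ (Chen); moreover $\l\ge0$ on $M$ with $\inf_M\l=0$ attained, and the soliton identity $R+|\nabla\l|^2=\l/\t$ coming from $(\ref{Hess})$ gives $R=0$ at a minimum point of $\l$. Since $R\ge0$ satisfies the standard elliptic inequality for the scalar curvature of a shrinking soliton, the strong maximum principle forces $R\equiv0$, hence $\Ric\equiv0$, on $M\times[0,\bt]$; the flow is therefore static and Ricci-flat and $(\ref{soliton})$ reduces to $\Hess\,\l=\frac{1}{2\t}g$ with $\l$ attaining its minimum. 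A classical rigidity argument for this Hessian equation (integrate along unit-speed minimizing geodesics from the minimum point to identify $\l$ with a multiple of the squared distance and deduce that the exponential map is a diffeomorphism) then gives $(M,g(\bt))\cong(\R^n,g_{\rm E})$, so $(M^n,g(\t))$ is the Gaussian soliton for every $\t\in[0,\bt]$; the converse is the direct computation of $\tilV\equiv1$ for the Gaussian soliton under our normalization.

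I expect the main obstacle to be the equality-case bookkeeping, namely tracing equality through $(\ref{Hess})$, $(\ref{Laplace})$ and the trace inequality $\tr\,\H(X,\cdot)\ge\H(X)$ until it collapses to the clean soliton form $(\ref{soliton})$, together with the soft step of upgrading the almost-everywhere statement on $T_pM$ to the pointwise structure equations on all of $M$, which hinges on $\Omega_{(p,0)}(\bt)$ having full measure and on the smoothness of $g$ and $\Ric$.
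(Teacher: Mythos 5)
Your treatment of the monotonicity, the $\t\to0^+$ asymptotics, and the derivation of the soliton equation $(\ref{soliton})$ from the equality case follows the standard computation the paper references, and is fine. But your route to rigidity is genuinely different from the paper's and contains a real gap. You try to upgrade the super Ricci flow to an honest backward Ricci flow by arguing that equality forces $\Ric(X,X)=h(X,X)$ along the $\L$-geodesic velocities and then ``polarizing'' to get $\Ric=h$. This polarization has no justification: at each space--time point $(q,\t)$ with $q$ outside the $\L$-cut locus there is exactly one minimal $\L$-geodesic from $(p,0)$ and hence exactly one velocity $X$; the velocities do not fill an open set of directions at any fixed point. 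Without $\Ric=h$, everything downstream collapses: the time-slices are not known to be gradient shrinking Ricci solitons, so Chen's $R\ge0$, the strong maximum principle step, and the Hessian rigidity are not available. You also invoke $\l\ge0$ and $R\ge0$, which in the paper's framework come from $H\ge0$ for \emph{ancient} super Ricci flows (Proposition A.1); here the flow lives only on $[0,T)$, so these are not given.

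The paper's proof never needs $\Ric=h$, and that is what makes it work in the super Ricci flow generality. The soliton identity $(\ref{soliton})$ is phrased in terms of $\frac12\dertau{}g=h$, not $\Ric$, and one integrates it directly: if $\phi_{\t-\t_\delta}$ flows along $\nabla\l$ starting at time $\t_\delta$, then $\dertau{}\bigl[\frac1\t(\phi_{\t-\t_\delta})^*g(\t)\bigr]=0$, giving $g(\t)=\frac{\t}{\t_\delta}(\phi_{\t-\t_\delta}^{-1})^*g(\t_\delta)$. Since $g$ is smooth near $(p,0)$ and $\phi_{\t-\t_\delta}^{-1}(q)\to p$ as $\t_\delta\to0$, this scaling relation forces $|\Rm|(q,\t)\le\frac{\t_\delta}{\t}\bigl(|\Rm|(p,0)+\theta(\t_\delta)\bigr)\to0$, so $g(\t)$ is flat. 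Simple connectedness (obtained by comparing $\tilV$ with the reduced volume of the universal cover) then gives $(\R^n,g_{\rm E})$, and the residual conformal factor $g(\t)=u(\t)^{-1}g_{\rm E}$ is shown to be constant by computing $\tilV$ explicitly and applying Jensen's inequality. You should replace your rigidity step by this flow-pullback argument, or else supply a valid reason why $\Ric=h$ holds when $\tilV\equiv1$.
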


We need to give a proof that $\tilV_{(p, 0)}(\bt)=1$ for some
$\bt>0$ implies that $(M^n, g(\t))$ is the Gaussian soliton on
$[0, \bt]$. The proofs of the other statements are minor
modifications of those of~\cite[Lemma 8.16, Corollary 8.17]{V2}
for the Ricci flow. It should be noted that we have no
assumption on the curvature of $g(\t)$ other than the lower
bound of $\dertau{}g$ in contrast to\break \cite[Corollary
8.17]{V2}.

\begin{proof}[Proof of Theorem~$\ref{reduced}$]
Suppose that $\tilV_{(p, 0)}(\bt)=1$. This implies that $M$
is simply connected. Otherwise, the reduced volume of the
universal covering $(\bM, \bg(\bt))$ of $(M, g(\bt))$ must
be greater than $1$, which is a contradiction.

Fix some small $\t_\delta \in (0, \bt)$.
For any $\t \in (\t_\delta, \bt]$,
let $\phi_{\t-\t_\delta} : M \to M$ be the map which sends $q\in M$ to $\gamma(\t)$,
where $\gamma: [0, \bt]\to M$ is the minimal $\L$-geodesic passing $(q, \t_\delta)$ with $\gamma(0)=p$.

Since $\dertau{}\phi_{\t-\t_\delta}(q) = \frac{d\gamma}{d\t}(\t) = \nabla \l_{(p, 0)}(\gamma(\t), \t)$,
we deduce from (\ref{soliton}) that
\begin{equation*}
\dertau{}\frac{1}{\t}(\phi_{\t-\t_\delta})^*g(\t) =
\frac{1}{\t}(\phi_{\t-\t_\delta})^*\[-\frac{1}{\t}g(\t) + 2\,
\Hess \l_{(p, 0)} + \dertau{g}(\t) \] = 0.
\end{equation*}
Hence,
\begin{equation*}
\frac{1}{\t}(\phi_{\t-\t_\delta})^*g(\t)= \frac{1}{\t_\delta}g(\t_\delta)
\hbox{ or equivalently } g(\t)=
\frac{\t}{\t_\delta}(\phi_{\t-\t_\delta}^{-1})^*g(\t_\delta).
\end{equation*}
Since $g(\t)$ is smooth around $(p, 0)$, we have
\begin{align*}
|\Rm|(q, \t) &= \frac{\t_\delta}{\t}|\Rm|(\phi_{\t-\t_\delta}^{-1}(q), \t_\delta)\\
&\le \frac{\t_\delta}{\t}\(|\Rm|(p, 0) +
\theta(\t_\delta)\) \to 0 \hbox{ as } \t_\delta \to 0
\end{align*}
where $\Rm$ denotes the Riemann curvature tensor and
$\theta(\t_\delta)$ is a function such that
$\theta(\t_\delta)\to 0$ as $\t_\delta \to 0$.
Consequently, $(M^n, g(\t))$ is flat and hence isometric to
$(\mathbb{R}^n, g_{\rm E})$ for each $\t\in[0, \bt]$. We can
write $g(\t) = u(\t)^{-1}g_{\rm E}$ for some positive
non-decreasing function $u(\t)$ with $u(0)=1$. It remains
to show that $u(\t) =1$ for all $\t \in [0, \bt]$.

Introduce a new parameter $\sigma:=2\sqrt\t$ to write
$g(\sigma)=u(\sigma)^{-1}g_{\rm E}$ for $\sigma\in[0, \bs]$, where
$\bs:=2\sqrt{\bt}$. By calculation (cf.~\cite[Lemma 7.67]{V2}),
it is easy to\break see that
\begin{equation*}
\l_{(p, 0)}(q, \bt)= \frac{d_{\rm E}(p, q)^2}{\bs\int_0^{\bs}
u(\sigma)\,d\sigma}  - \frac{n}{2}\log u(\bs) +
\frac{n}{2}\frac{\int_0^{\bs} \log u(\sigma)\,d\sigma}{\bs}
\end{equation*}
and
\begin{align*}
\tilV_{(p, 0)}(\bt) &=
\int_{\R^n} \(\pi{\bs}^2 \exp{ \frac{\int_0^{\bs}\log u(\sigma)\,d\sigma}{\bs} }\)^{-n/2} \exp
-\frac{d_{\rm E}(p, q)^2}{\bs \int_0^{\bs} u(\sigma)\,d\sigma} \,dx(q)\\
&=\( \frac{1}{\bs} \int_0^{\bs} u(\sigma)\,d\sigma
\)^{n/2}\( \exp{ \frac{\int_0^{\bs}\log
u(\sigma)\,d\sigma}{\bs} } \)^{-n/2}.
\end{align*}

It follows from Jensen's inequality
\begin{equation}\label{Jensen}
\frac{1}{\bs} \int_0^{\bs} \log u(\sigma)\,d\sigma \le \log
\frac{1}{\bs} \int_0^{\bs} u(\sigma)\,d\sigma
\end{equation}
that $\tilV_{(p, 0)}(\bt) \ge 1$. As  $\tilV_{(p, 0)}(\bt)
\le 1$, we must have equality in (\ref{Jensen}), that is,
$u(\bs)=1$. This completes the proof of
Theorem~\ref{reduced}.
\end{proof}

\subsection{Example}

As an important example, let us
look at a stationary super Ricci flow. Then we obtain an
invariant which is called the {\it static reduced volume}
in~\cite{V2}. Its relation to the volume ratio is given by
\begin{lem}[{\rm \cite[Lemma 8.10]{V2}}]\label{Ricciflat}
Let $(M^n, g)$ be an $n$-dimensional complete Riemannian
manifold of non-negative Ricci curvature regarded as a
stationary super Ricci flow, i.e., $\dertau{}g=0 \le
2\,\Ric$. Then for any $p \in M$ and $\t>0$, we have
\begin{equation}\label{staticred}
\tilV_{(p, 0)}(\t) = \int_M
(4\pi\t)^{-n/2}\exp\({-\frac{d(p, q)^2}{4\t}}\)\,d\mu(q)
\le 1,
\end{equation}
and
\begin{equation*}
\tilNu(g) :=\lim_{\t \rightarrow\infty} \tilV_{(p, 0)}(\t)
= \lim_{r \rightarrow\infty} \frac{\vol\, B(p, r)}{\omega_n
r^n} =:\nu(g).
\end{equation*}
Furthermore, the equality holds in $(\ref{staticred})$ for
some $\t>0$ if and only if $(M^n, g)$ is isometric to
$(\R^n, g_{\rm E})$.
\end{lem}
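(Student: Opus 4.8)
The plan is to deduce Lemma~\ref{Ricciflat} directly from the general machinery of Theorem~\ref{reduced} specialized to the stationary case $g(\tau) \equiv g$, $h \equiv 0$, $H \equiv 0$, and then to identify the limiting invariant with the asymptotic volume ratio by an elementary rescaling argument. First I would observe that when $h \equiv 0$ the $\L$-length of a curve $\gamma\colon[0,\bt]\to M$ is $\int_0^{\bt}\sqrt{\tau}\,|\dot\gamma|^2\,d\tau$, and after the substitution $\sigma = 2\sqrt{\tau}$ this becomes $\tfrac14\int_0^{\bs}|\gamma'(\sigma)|^2\,d\sigma$ with $\bs = 2\sqrt{\bt}$; minimizing over curves with fixed endpoints, the minimizer is a constant-speed geodesic of $(M,g)$, so $L_{(p,0)}(q,\bt) = \tfrac14\cdot\tfrac{d(p,q)^2}{\bs} = \tfrac{d(p,q)^2}{2\sqrt{\bt}}$ and hence $\l_{(p,0)}(q,\bt) = \tfrac{d(p,q)^2}{4\bt}$. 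Substituting this into the definition of $\tilV_{(p,0)}(\bt)$ gives the first displayed formula in \eqref{staticred} immediately; the inequality $\le 1$ is then just the already-established general fact $\tilV_{(p,0)}(\tau)\le 1$ from Theorem~\ref{reduced} (alternatively it follows from Bishop--Gromov, but invoking Theorem~\ref{reduced} is cleanest).

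Next I would handle the identification $\tilNu(g) = \nu(g)$. Write $I(\t) := \int_M (4\pi\t)^{-n/2}\exp(-d(p,q)^2/4\t)\,d\mu(q)$ and use the layer-cake / coarea formula: $I(\t) = \int_0^\infty (4\pi\t)^{-n/2}\exp(-r^2/4\t)\,d(\vol B(p,r))$, or equivalently, after integrating by parts, $I(\t) = \int_0^\infty \vol B(p,r)\cdot (4\pi\t)^{-n/2}\tfrac{r}{2\t}\exp(-r^2/4\t)\,dr$. Substituting $r = 2\sqrt{\t}\,s$ turns this into $I(\t) = \int_0^\infty \frac{\vol B(p, 2\sqrt{\t}\,s)}{(2\sqrt{\t}\,s)^n}\cdot \frac{s^n}{(4\pi)^{n/2}}\cdot s\,e^{-s^2}\cdot 2^n\,ds$, i.e.\ $I(\t) = \omega_n^{-1}\int_0^\infty \frac{\vol B(p,2\sqrt{\t}\,s)}{\omega_n(2\sqrt{\t}\,s)^n}\cdot c_n s^{n+1}e^{-s^2}\,ds$ where the constant $c_n$ is chosen so that $\omega_n^{-1}\int_0^\infty c_n s^{n+1}e^{-s^2}\,ds = 1$ (this normalization is exactly what makes $I(\t)\to 1$ when $M = \R^n$). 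By Bishop--Gromov the ratio $\vol B(p,\rho)/\omega_n\rho^n$ is non-increasing in $\rho$, hence bounded by $1$ and convergent to $\nu(g)$ as $\rho\to\infty$; monotone convergence (the integrand increases pointwise as $\t\to\infty$ by Bishop--Gromov monotonicity, up to the fixed positive weight) then gives $\lim_{\t\to\infty} I(\t) = \nu(g)\cdot\omega_n^{-1}\int_0^\infty c_n s^{n+1}e^{-s^2}\,ds = \nu(g)$.

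Finally, for the rigidity statement: if equality holds in \eqref{staticred} for some $\t>0$, then $\tilV_{(p,0)}(\t) = 1$, and Theorem~\ref{reduced} already tells us that this forces $(M^n, g(\t))$ on $[0,\t]$ to be the Gaussian soliton — in particular $(M,g)$ is isometric to $(\R^n, g_{\rm E})$. The converse is the trivial direct computation $I(\t) = \int_{\R^n}(4\pi\t)^{-n/2}e^{-|x|^2/4\t}\,dx = 1$. The main obstacle — really the only point requiring care rather than bookkeeping — is justifying the passage to the limit in the integral representation of $I(\t)$: one must confirm that $\vol B(p,\rho)/\omega_n\rho^n$ is not merely bounded but actually monotone (Bishop--Gromov, valid under $\Ric\ge 0$), so that the convergence $I(\t)\to\nu(g)$ follows from the monotone convergence theorem without any further integrability worries; the Gaussian weight $c_n s^{n+1}e^{-s^2}$ makes all tails harmless. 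One should also note at the outset that the completeness and $\Ric\ge 0$ hypotheses ensure $\dertau{}g = 0$ is bounded below (trivially) so that Theorem~\ref{reduced} applies, and that $\Ric\ge 0 \ge \tfrac12\dertau{}g$ together with $h\equiv 0$ makes Assumption~\ref{as} automatic.
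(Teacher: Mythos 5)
Your overall strategy is correct and is essentially the standard proof; the paper itself does not prove this lemma but cites \cite[Lemma 8.10]{V2}, and your reconstruction follows the same line one finds there. Two minor slips, neither of which damages the argument. First, the substitution $\sigma = 2\sqrt{\tau}$ gives $\sqrt{\tau}\,|\dot\gamma|^2\,d\tau = |\gamma'(\sigma)|^2\,d\sigma$ exactly (each of $\sqrt{\tau}$, $|\dot\gamma|^2$, $d\tau$ contributes $\sigma/2$, $4/\sigma^2$, $\sigma/2$ respectively, and these cancel), so $\L(\gamma) = \int_0^{\bs}|\gamma'|^2\,d\sigma$ with no factor $\tfrac14$, and $L_{(p,0)}(q,\bt) = d(p,q)^2/\bs = d(p,q)^2/(2\sqrt{\bt})$; your intermediate display $L = \tfrac14\cdot d^2/\bs = d^2/(2\sqrt{\bt})$ is internally inconsistent, but the resulting $\l_{(p,0)}(q,\bt) = d(p,q)^2/4\bt$ is the correct formula, which is all that matters downstream. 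Second, Bishop--Gromov says $\rho\mapsto \vol B(p,\rho)/\omega_n\rho^n$ is \emph{non-increasing}, so for fixed $s$ the integrand in your rewritten $I(\t)$ \emph{decreases} (not increases) as $\t\to\infty$; the limit $I(\t)\to\nu(g)$ still follows immediately, either by monotone convergence for decreasing sequences with integrable first term, or most cleanly by dominated convergence using the uniform bound $\vol B(p,\rho)/\omega_n\rho^n \le 1$ together with integrability of $s^{n+1}e^{-s^2}$. With those two corrections the proof is complete; the appeal to Theorem~\ref{reduced} for both the bound $\le 1$ and the rigidity is exactly the right way to exploit the machinery already set up in the paper.
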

By virtue of Lemma~\ref{Ricciflat}, we know that
Theorem~\ref{main} generalizes\break \hbox{Theorem}~\ref{gap}.

One can easily compute how the reduced distance and reduced
volume change under parabolic rescaling.
\begin{prop}[{\rm \cite[Lemma 8.34]{V2}}]\label{rescaling}
If $g(\t), \t\in [0, T)$ is a super Ricci flow, then
$(Qg)(\t) := Qg(Q^{-1}\t), \t\in [0, QT)$ is also a super
Ricci flow for any $Q>0$. Under this parabolic
rescaling, we have
\begin{equation*}
\l^{Qg}(q, \t)=\l^{g}(q, Q^{-1}\t) \hbox{ and }
\tilV^{Qg}(\t) = \tilV^{g}(Q^{-1}\t).
\end{equation*}
In particular, the asymptotic reduced volume is invariant
under the parabolic rescaling, i.e.,
$\tilNu(g)=\tilNu(Qg)$, for any ancient super Ricci flow
$g(\t), \t\in[0, \infty)$.
\end{prop}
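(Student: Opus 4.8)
The plan is to prove every assertion by a direct change of variables, so I would first record how the basic quantities rescale. Writing $s := Q^{-1}\t$, one has $\dertau{}(Qg)(\t) = Q\cdot Q^{-1}\tfrac{\partial g}{\partial s}(s) = \tfrac{\partial g}{\partial s}(s)$, while $\Ric_{(Qg)(\t)} = \Ric_{g(s)}$ by scale invariance of the Ricci tensor; hence $\dertau{}(Qg)(\t) \le 2\,\Ric_{(Qg)(\t)}$ and $Qg$ is a super Ricci flow on $[0, QT)$. Moreover $h^{Qg}(\t) = h^{g}(s)$ as $(0,2)$-tensors, so $H^{Qg}(\t) = \tr_{(Qg)(\t)}h^{Qg}(\t) = Q^{-1}H^{g}(s)$, and since the two conditions in Assumption~\ref{as} and the lower bound on $\dertau{}g$ are tensorial with every term rescaling by a consistent power of $Q$, they are inherited by $Qg$; thus the reduced geometry of $Qg$ is well defined.

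Next I would compute the $\L$-length. Given a curve $\gamma : [0, \t] \to M$, set $\tilde\gamma(s) := \gamma(Qs)$, a curve on $[0, Q^{-1}\t]$ with the same endpoints. Using $|\tfrac{d\gamma}{d\t'}|^{2}_{(Qg)(\t')} = Q^{-1}|\tfrac{d\tilde\gamma}{ds'}|^{2}_{g(s')}$, $H^{Qg}(\gamma(\t'), \t') = Q^{-1}H^{g}(\tilde\gamma(s'), s')$, $\sqrt{\t'} = \sqrt{Q}\,\sqrt{s'}$, and $d\t' = Q\,ds'$ with $\t' = Qs'$, the three factors of $Q$ combine into a single $\sqrt{Q}$ and one finds $\L^{Qg}(\gamma) = \sqrt{Q}\,\L^{g}(\tilde\gamma)$. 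Since $\gamma \mapsto \tilde\gamma$ is a bijection between admissible curves, taking infima gives $L^{Qg}_{(p,0)}(q, \t) = \sqrt{Q}\,L^{g}_{(p,0)}(q, Q^{-1}\t)$, and dividing by $2\sqrt{\t} = 2\sqrt{Q}\,\sqrt{Q^{-1}\t}$ yields $\l^{Qg}_{(p,0)}(q, \t) = \l^{g}_{(p,0)}(q, Q^{-1}\t)$, the first stated identity.

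For the second identity I would substitute this into the definition of $\tilV$ and use $(4\pi\t)^{-n/2} = Q^{-n/2}(4\pi s)^{-n/2}$ together with $d\mu_{(Qg)(\t)} = Q^{n/2}\,d\mu_{g(s)}$; the powers of $Q$ cancel and $\tilV^{Qg}_{(p,0)}(\t) = \tilV^{g}_{(p,0)}(Q^{-1}\t)$. Finally, if $g(\t)$ is ancient ($T = \infty$) then so is $Qg(\t)$, and letting $\t \to \infty$ --- so that $s = Q^{-1}\t \to \infty$ as well, since $Q > 0$ is fixed --- gives $\tilNu(Qg) = \lim_{\t\to\infty}\tilV^{g}_{(p,0)}(Q^{-1}\t) = \tilNu(g)$.

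The argument is entirely mechanical, so there is no genuine obstacle; the one place requiring care --- the step I would single out --- is the bookkeeping of the several competing powers of $Q$: inside the $\L$-length there are three (the weight $\sqrt{\t}$, the metric in the speed term, and the trace defining $H$), and inside the reduced-volume integrand there are two (the $(4\pi\t)^{-n/2}$ prefactor and the volume element), and one must check that in each case they cancel exactly so that $\l$ and $\tilV$ pick up nothing beyond the reparametrization $\t \mapsto Q^{-1}\t$.
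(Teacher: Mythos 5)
Your computation is correct, and since the paper itself gives no proof of Proposition~2.2 (it is cited directly to~\cite[Lemma 8.34]{V2}), your direct change-of-variables verification is essentially \emph{the} standard argument one would write out. The bookkeeping of the powers of $Q$ --- $h^{Qg}(\tau)=h^g(s)$, $H^{Qg}=Q^{-1}H^g$, $|\dot\gamma|^2_{Qg}=Q^{-1}|\dot{\tilde\gamma}|^2_g$, $\sqrt{\tau'}\,d\tau'=Q^{3/2}\sqrt{s'}\,ds'$, and $d\mu_{(Qg)(\tau)}=Q^{n/2}d\mu_{g(s)}$ against $(4\pi\tau)^{-n/2}=Q^{-n/2}(4\pi s)^{-n/2}$ --- is exactly right and yields $\L^{Qg}=\sqrt{Q}\,\L^g$, hence $\l^{Qg}(q,\t)=\l^g(q,Q^{-1}\t)$ and $\tilV^{Qg}(\t)=\tilV^g(Q^{-1}\t)$ with no leftover factor, and the asymptotic statement follows by $\t\to\infty$. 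Your remark that Assumption~\ref{as} is inherited because each condition is homogeneous in $Q$ is also correct (both sides of the Bianchi-type identity scale by $Q^{-1}$, both sides of the heat-type inequality by $Q^{-2}$), and this is needed for the reduced geometry of $Qg$ to be defined at all.
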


\section{Preliminary results}

In this section, we prove some lemmas needed in the proof
of our main theorem.

\subsection{Preliminary estimates}

Given a super Ricci flow $(M^n, g(\t)), \t\in [0, T)$, take
$p\in M$ and $\t\in(0, T)$. Let us put
\begin{equation*}
\L B_\t(p, r) := \{ \L\,\exp_\t(V) ; \ V\in \Omega_{(p,
0)}(\t), |V|_{g(0)}< r \}.
\end{equation*}
This notation comes from the fact that a geodesic ball in a
Riemannian manifold is the image of ball of the same radius
in the tangent space under the exponential map. In this
subsection, we derive a few estimates which we shall make
heavy use of in the remaining of this paper.

\begin{prop}\label{epsilon}
Let $u(\cdot, \t) := (4\pi\t)^{-n/2}\exp(-\l_{(p, 0)}(\cdot, \t))$.
\begin{enumerate}
\item[(1)]For all $r>0$ and $\t\in(0, T)$, we have
\begin{equation*}
\tilV_{(p, 0)}(\t) -\epsilon(r) \le \int_{\L B_\t(p, r)}
u(\cdot, \t)\, d\mu_{g(\t)}.
\end{equation*}
\item[(2)]Given $r>0$ and $\t_0\in (0, T)$,
we can find a family of subsets $\L K_{\t, \t_0}(p, r)$ of $M$ for $\t\in(0, T)$ satisfying the following properties:
\begin{enumerate}
\item[(a)]For all $\t \le \t_0,\L K_{\t, \t_0}(p, r)$ is compact.
\item[(b)]For all $\t \le \bt,\L K_{\t, \t_0}(p, r)$ contains all of the points $\gamma(\t)$
on any minimal $\L$-geodesics $\gamma :[0, \bt]\to M$ connecting $(p, 0)$ and $(q, \bt)$
with $q \in \L K_{\bt, \t_0}(p, r)$.
\item[(c)]For all $\t\ge\t_0$ we have
\begin{equation*}
\tilV_{(p, 0)}(\t) - 2\epsilon(r) \le \int_{\L K_{\t,
\t_0}(p, r)}  u(\cdot, \t)\,d\mu_{g(\t)}.
\end{equation*}
\end{enumerate}
\end{enumerate}

Here, $\epsilon(r)$ is a function of $r>0$ with
$\epsilon(r) \le {\rm e}^{-r^2/2}$ for all $r$ large
enough. Clearly, $\epsilon(r)$ decays to $0$ exponentially
as $r\to \infty$.
\end{prop}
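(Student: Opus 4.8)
The plan is to bound the ``mass at infinity'' of the integrand $u(\cdot,\t)$ uniformly in $\t$, using the monotonicity of the integrand in Theorem~\ref{reduced} together with the Gaussian profile of its limit as $\t\to0+$. For part~(1), I would argue directly on $T_pM$ via the pull-back formula (\ref{eq:reducedpullback}): since $\tilV_{(p,0)}(\t)=\int_{\Omega_{(p,0)}(\t)} u(\L\exp_\t(V),\t)\,\L J_V(\t)\,dx_{g(0)}(V)$ and $\L B_\t(p,r)$ is precisely the image of $\Omega_{(p,0)}(\t)\cap\{|V|_{g(0)}<r\}$ under the diffeomorphism $\L\exp_\t$, the difference $\tilV_{(p,0)}(\t)-\int_{\L B_\t(p,r)}u(\cdot,\t)\,d\mu_{g(\t)}$ equals $\int_{\Omega_{(p,0)}(\t)\cap\{|V|_{g(0)}\ge r\}}(4\pi\t)^{-n/2}e^{-\l_{(p,0)}(\gamma_V(\t),\t)}\,\L J_V(\t)\,dx_{g(0)}(V)$. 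By the monotonicity statement, the integrand here is bounded above by its $\t\to0+$ limit $\pi^{-n/2}e^{-|V|_{g(0)}^2}$; hence the tail is at most $\int_{\{|V|\ge r\}}\pi^{-n/2}e^{-|V|^2}\,dx_{g(0)}(V)=:\epsilon(r)$, and a routine estimate of the Gaussian tail gives $\epsilon(r)\le e^{-r^2/2}$ for $r$ large.

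For part~(2), I would take $\L K_{\t,\t_0}(p,r)$ to be, for $\t\le\t_0$, the closure of $\L B_\t(p,r)$ — more precisely the set of endpoints $\gamma(\t)$ of minimal $\L$-geodesics $\gamma:[0,\bt]\to M$ from $(p,0)$ to points $q\in\overline{\L B_{\t_0}(p,r)}$ with $\bt\le\t_0$, and for $\t\ge\t_0$ simply $\L K_{\t,\t_0}(p,r):=\L B_\t(p,r)$ (equivalently its image under the later flow is irrelevant since we only need (c) there). Property~(b) is then essentially built into the definition: a subarc of a minimal $\L$-geodesic is itself minimal, so if $q\in\L K_{\bt,\t_0}(p,r)$ and $\gamma$ is a minimal $\L$-geodesic from $(p,0)$ to $(q,\bt)$, then $\gamma(\t)$ lies in $\L K_{\t,\t_0}(p,r)$ for $\t\le\bt$. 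Property~(a), compactness for $\t\le\t_0$, follows because $\overline{\L B_{\t_0}(p,r)}$ is compact (the closure of a precompact set; precompactness uses the metric equivalence $g(\t)\ge e^{-K\t}g(0)$ on $[0,\t_0]$ to bound $\L$-geodesic endpoints within a fixed $g(0)$-ball, and completeness of $g(\t)$) and the $\L$-geodesic flow depends continuously on its data, so the union over $\bt\le\t_0$ of the time-$\t$ positions of minimal $\L$-geodesics ending in a fixed compact set is precompact. Property~(c), for $\t\ge\t_0$, is immediate from part~(1) with the cruder bound $2\epsilon(r)$, giving room to absorb any discrepancy.

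The main obstacle I expect is the continuity/compactness argument underlying property~(a): one must know that minimal $\L$-geodesics to points of a compact set, over the time range $[0,\t_0]$, form a compact family, so that their intermediate positions stay in a compact set. This requires (i) a uniform bound on the initial vectors $V=\lim_{\t\to0+}\sqrt\t\,\dot\gamma(\t)$ for such geodesics — obtainable from an $\L$-length bound, since $q$ ranges over a compact set and $L_{(p,0)}(\cdot,\bt)$ is bounded there, combined with the lower bound on $\dertau{}g$ which controls $H$ from below — and (ii) the fact, guaranteed by the lower bound on $\dertau{}g$ (the same hypothesis that ensures minimizers exist), that the $\L$-exponential map and its domain behave well enough that limits of minimal $\L$-geodesics are again minimal $\L$-geodesics. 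These are exactly the technical facts established in the references (\cite{Ye}, \cite{V2}) in the Ricci flow case and extended to the present setting; I would cite them and relegate the detailed verification to Appendix~A. The rest — the Gaussian tail estimate and the bookkeeping with $\epsilon(r)$ versus $2\epsilon(r)$ — is routine.
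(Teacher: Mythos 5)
Your part (1) coincides with the paper's argument: pull back by $\L\exp_\t$, dominate the integrand by its $\t\to0^+$ limit $\pi^{-n/2}e^{-|V|_{g(0)}^2}$ via the pointwise monotonicity in Theorem~\ref{reduced}, and estimate the Gaussian tail.

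For part (2) you take a genuinely different route. The paper places the compact set in the tangent space: it chooses a compact $K\subset B(0,r)\cap\Omega_{(p,0)}(\t_0)\subset T_pM$ whose complement in $B(0,r)\cap\Omega_{(p,0)}(\t_0)$ has $g(0)$-Lebesgue measure $<\pi^{n/2}\epsilon(r)$, and sets $\L K_{\t,\t_0}(p,r):=\L\exp_\t\bigl(K\cap\Omega_{(p,0)}(\t)\bigr)$. Then (a) is immediate — for $\t\le\t_0$ one has $K\subset\Omega_{(p,0)}(\t_0)\subset\Omega_{(p,0)}(\t)$, so $\L K_{\t,\t_0}(p,r)=\L\exp_\t(K)$ is the continuous image of a compact set; (b) is built in because $\L\exp_\t(V)=\gamma_V(\t)$; and (c) is obtained by splitting $M\setminus\L K_{\t,\t_0}(p,r)$ into $M\setminus\L B_\t(p,r)$ (mass $\le\epsilon(r)$ by (1)) and $\L B_\t(p,r)\setminus\L K_{\t,\t_0}(p,r)$, whose pre-image in $T_pM$ lies, for $\t\ge\t_0$, inside $B(0,r)\cap\Omega_{(p,0)}(\t_0)\setminus K$ and so contributes at most $\pi^{-n/2}\cdot\pi^{n/2}\epsilon(r)=\epsilon(r)$ by the same pointwise Gaussian bound. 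You instead build $\L K_{\t,\t_0}(p,r)$ directly in $M$ from the time-$\t$ positions of minimal $\L$-geodesics ending in $\overline{\L B_{\t_0}(p,r)}$ (and take $\L B_\t(p,r)$ itself for $\t\ge\t_0$). This flips the difficulty: your (c) becomes immediate (even with constant $\epsilon(r)$ rather than $2\epsilon(r)$), and (b) is also nearly immediate, but your (a) now hinges on a compactness theorem for families of minimal $\L$-geodesics — uniform control of the initial vectors $V$ over a compact target set, plus the fact that limits of minimal $\L$-geodesics are minimal — which you rightly flag as nontrivial and relegate to \cite{Ye,V2}. The paper's construction is leaner precisely because it never needs this ingredient: compactness lives in the flat vector space $T_pM$ and propagates for free through $\L\exp_\t$. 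Both routes are sound, but if you want the argument to be self-contained at the level of what has already been established, the tangent-space construction is the more economical one.

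One minor wording point: your $\L K_{\t,\t_0}(p,r)$ for $\t\le\t_0$ is described first as ``the closure of $\L B_\t(p,r)$'' and then ``more precisely'' as a set of geodesic positions relative to $\overline{\L B_{\t_0}(p,r)}$; these are not a priori the same set, and it is only the second description that your arguments for (a) and (b) actually use. Also, ``endpoints $\gamma(\t)$'' should read ``time-$\t$ positions $\gamma(\t)$,'' since $\t$ is generally interior to $[0,\bt]$.
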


\begin{proof}
(1) We deduce from~(\ref{eq:reducedpullback}) and
Theorem~\ref{reduced} that
\begin{align*}
\int_{M \setminus \L B_\t(p, r)} u(\cdot, \t)\, d\mu_{g(\t)}&=\int_{ \Omega_{(p, 0)}(\t) \setminus B(0, r) } u(\L\,\exp_\t(V), \t) \L J_V(\t) dx_{g(0)}(V)\\
&\le\int_{T_{p}M \setminus B(0, r)}
\pi^{-n/2}e^{-|V|_{g(0)}^2}\, dx_{g(0)}(V)=:\epsilon(r).
\end{align*}

(2) Take a compact set $K$ of $T_pM$ so that $K \subset
B(0, r)\cap \Omega_{(p, 0)}(\t_0)$ and the Lebesgue measure
of $B(0, r)\cap \Omega_{(p, 0)}(\t_0)\setminus K$, induced
by $g(0)$, is less than $\pi^{n/2}\epsilon(r)$. We show
that $\L K_{\t, \t_0}(p, r) :=
\L\,\exp_{\t}(K\cap\Omega_{(p, 0)}(\t))$ has the desired
properties. It is clear that (a) and (b) hold by
construction, since $\Omega_{(p, 0)}(\t_0) \subset
\Omega_{(p, 0)}(\t)$ for $\t\le \t_0$. Furthermore, by the
same argument as in (1), we deduce that
\begin{align*}
&\int_{M \setminus \L K_{\t, \t_0}(p, r)} u(\cdot, \t)\, d\mu_{g(\t)}\\
&\quad=\int_{M\setminus \L B_\t(p, r)} +\int_{\L B_\t(p, r)
\setminus \L K_{\t, \t_0}(p, r)} u(\cdot, \t)\, d\mu_{g(\t)}
\quad \le 2\epsilon(r)
\end{align*}
for $\t\ge \t_0$.

Finally, we estimate $\epsilon(r)$ for $r\ge r_0$ by
\begin{equation*}
\epsilon(r) = n\omega_n \pi^{-n/2} \int_r^\infty
{\rm e}^{-r^2}r^{n-1}\,dr \le \int_r^\infty {\rm e}^{-r^2/2}r\,dr
= {\rm e}^{-r^2/2}.
\end{equation*}
Here $r_0\gg1$ is taken so that
$n\omega_n\pi^{-n/2}{\rm e}^{-r^2/2}r^{n-2} \le 1$ for all $r \ge
r_0$.
\end{proof}

\begin{prop}\label{escape}
Assume that $h\kern-1pt  \ge\kern-1pt  -C_0g(\t)$ and $|\nabla H|^2\kern-1pt  \le\kern-1pt  D_0$ on
$\mathcal{K}\kern-1pt \times\kern-1pt [0, T_0]$ for some compact set
$\mathcal{K} \subset M$ containing a ball $B_{g(0)}(p, r)$.
Consider the \hbox{$\L$-geodesic} $\gamma_V :[0, \bt] \to M$ with
$\gamma_V(0)=p$ and $\lim_{\t \to
0+}\sqrt\t\frac{d\gamma_V}{d\t}=V$. Then we can find
$C=C(C_0, T_0), D=D(C_0, D_0, T_0)$ and small $\delta =
\delta(C_0, D_0, r,\break  |V|_{g(0)}, T_0)>0$ such that
\begin{equation}\label{ineq:escape}
d_{g(0)}(p, \gamma_V(\t)) \le (C|V|_{g(0)}+D)\sqrt\t
\end{equation}
and hence $\gamma_V(\t) \in B_{g(0)}(p, r) \subset
\mathcal{K}$ for all $\t \in[0, \delta]$.
\end{prop}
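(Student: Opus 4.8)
The plan is to analyze the $\L$-geodesic equation~(\ref{geod}) as a (singular) ODE near $\t=0$, extract a Gr\"onwall-type bound for the rescaled speed $\sqrt\t\,|X|_{g(\t)}$, where $X:=\frac{d\gamma_V}{d\t}$, and then deduce~(\ref{ineq:escape}) together with the containment by a continuity (bootstrap) argument. I will freely use from the foundational theory of reduced geometry that $\gamma_V$ is smooth on $(0,\bt]$, extends continuously to $\t=0$ with $\gamma_V(0)=p$, and satisfies $\sqrt\t\,|X|_{g(\t)}\to|V|_{g(0)}$ as $\t\to0+$ (this is what makes the initial condition $\lim_{\t\to0+}\sqrt\t\frac{d\gamma_V}{d\t}=V$ meaningful); in particular $\sup_{[0,\t_0]}d_{g(0)}(p,\gamma_V)\to0$ as $\t_0\to0+$, so $\gamma_V([0,\t_0])\subset B_{g(0)}(p,r)$ for all small $\t_0>0$. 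Accordingly set
\begin{equation*}
\t^{*}:=\sup\bigl\{\t_0\in(0,\min\{\bt,T_0\}]\ :\ \gamma_V([0,\t_0])\subset B_{g(0)}(p,r)\bigr\}>0 .
\end{equation*}

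On the interval $(0,\t^{*})$ the hypotheses are available because $\gamma_V(\t)\in B_{g(0)}(p,r)\subset\mathcal{K}$. Using $\frac{\partial}{\partial\t}g=2h$ and rewriting~(\ref{geod}) as $\nabla_X X=-\frac{X}{2\t}+\frac12\nabla H-2h(X,\cdot)$, a direct computation along $\gamma_V$ gives
\begin{equation*}
\frac{d}{d\t}|X|^2_{g(\t)}=-\frac{|X|^2}{\t}-2h(X,X)+\langle\nabla H,X\rangle ,
\end{equation*}
so that $f(\t):=\t\,|X|^2_{g(\t)}$ satisfies $f'=-2\t\,h(X,X)+\t\langle\nabla H,X\rangle$. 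Since $h\ge-C_0g(\t)$ and $|\nabla H|\le\sqrt{D_0}$ on $\mathcal{K}\times[0,T_0]$, the Cauchy--Schwarz inequality yields
\begin{equation*}
f'(\t)\le 2C_0\,f(\t)+\sqrt{D_0}\,\sqrt\t\,\sqrt{f(\t)} ,\qquad \lim_{\t\to0+}f(\t)=|V|^2_{g(0)} .
\end{equation*}
Setting $\phi:=\sqrt f$ turns this into the linear differential inequality $\phi'\le C_0\phi+\tfrac12\sqrt{D_0}\,\sqrt\t$, so the integrating factor $e^{-C_0\t}$ together with $\t^{*}\le T_0$ gives $\phi(\t)\le e^{C_0T_0}\bigl(|V|_{g(0)}+\tfrac13\sqrt{D_0}\,T_0^{3/2}\bigr)$, i.e. $|X|_{g(\t)}\le\t^{-1/2}e^{C_0T_0}\bigl(|V|_{g(0)}+\tfrac13\sqrt{D_0}\,T_0^{3/2}\bigr)$ for $\t\in(0,\t^{*})$.

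Next I would pass from $g(\t)$ back to $g(0)$: from $\frac{\partial}{\partial\t}g=2h\ge-2C_0g$ one gets $g(0)\le e^{2C_0\t}g(\t)$, hence $|X|_{g(0)}\le e^{C_0\t}|X|_{g(\t)}$, and integrating (the right-hand side being integrable near $\t=0$),
\begin{equation*}
d_{g(0)}(p,\gamma_V(\t))\le\int_0^\t|X|_{g(0)}\,d\t'\le\bigl(C|V|_{g(0)}+D\bigr)\sqrt\t\qquad\text{for }0\le\t<\t^{*},
\end{equation*}
where $C:=2e^{2C_0T_0}$ depends only on $(C_0,T_0)$ and $D:=\tfrac23e^{2C_0T_0}\sqrt{D_0}\,T_0^{3/2}$ only on $(C_0,D_0,T_0)$. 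To finish, choose $\delta\in(0,\min\{\bt,T_0\}]$ small enough that $(C|V|_{g(0)}+D)\sqrt\delta<r$; such a $\delta$ exists and may be taken to depend only on $C_0,D_0,r,|V|_{g(0)},T_0$ (and on $\bt,T_0$ through the truncation). If we had $\t^{*}<\delta$, then $\t^{*}<\min\{\bt,T_0\}$ and, by continuity of $\gamma_V$ at $\t^{*}$, $d_{g(0)}(p,\gamma_V(\t^{*}))\le(C|V|_{g(0)}+D)\sqrt{\t^{*}}<r$; hence $\gamma_V([0,\t_0])\subset B_{g(0)}(p,r)$ for some $\t_0>\t^{*}$, contradicting the definition of $\t^{*}$. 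Therefore $\t^{*}\ge\delta$, and so $\gamma_V([0,\delta])\subset B_{g(0)}(p,r)\subset\mathcal{K}$ and~(\ref{ineq:escape}) holds on $[0,\delta]$.

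The step I expect to be the main obstacle is precisely this reconciliation of \emph{where} the hypotheses hold (the fixed compact set $\mathcal{K}$) with \emph{where} the conclusion is wanted (inside $B_{g(0)}(p,r)$): the differential inequality for $f$ is only legitimate while $\gamma_V$ has not yet left $\mathcal{K}$, so~(\ref{ineq:escape}) cannot be obtained by a single global integration and must be propagated by the continuity argument above. A minor additional point requiring care is the singular behaviour at $\t=0$ — the existence of $\lim_{\t\to0+}f(\t)=|V|^2_{g(0)}$ and the continuity of $\gamma_V$ there — which belongs to the standard setup of reduced geometry (the substitution $\sigma=2\sqrt\t$ renders the $\L$-geodesic equation regular, with $\gamma_V(0)=p$ and $\frac{d\gamma_V}{d\sigma}(0)=V$), and I would simply quote it.
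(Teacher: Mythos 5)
Your proof is correct and follows essentially the same route as the paper: one differentiates $f(\t)=\t|X|^2_{g(\t)}$ along $\gamma_V$ using the $\L$-geodesic equation, applies a Gr\"onwall-type estimate under the hypotheses $h\ge-C_0g$ and $|\nabla H|^2\le D_0$ valid while $\gamma_V$ remains in $\mathcal{K}$, and then integrates $|X|_{g(0)}\le e^{C_0\t}|X|_{g(\t)}$ and closes with a continuity argument. The only cosmetic difference is the Gr\"onwall step itself — you linearize by passing to $\phi=\sqrt f$, whereas the paper absorbs the cross term via $\t\langle\nabla H,X\rangle\le|\sqrt\t X|^2+\t|\nabla H|^2$ to get the ODI $f'\le(2C_0+1)f+D_0T_0$ directly in $f$ — and you spell out the bootstrap more explicitly than the paper does; both variants yield the same conclusion.
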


\begin{proof}
Let $\t^\prime \in[0, T_0]$ be the maximal time such that
$\gamma_V([0, \t^\prime]) \subset \mathcal{K}$. For $\t \le
\t^\prime$, we use the $\L$-geodesic equation~(\ref{geod})
to obtain
\begin{align*}
\frac{d}{d\t} |\sqrt\t X|^2_{g(\t)}
&= |X|_{g(\t)}^2 + 2h(\sqrt\t X, \sqrt\t X) + 2\t\langle\nabla_X X, X\rangle\\
&= -2h(\sqrt\t X, \sqrt\t X) + \t\langle\nabla H, X\rangle\\
&\le -2h(\sqrt\t X, \sqrt\t X) + |\sqrt\t X|^2_{g(\t)} + \t |\nabla H|^2_{g(\t)}\\
&\le (2C_0+1)|\sqrt\t X|^2_{g(\t)} + D_0T_0.
\end{align*}

From this, we derive that
\begin{align*}
|\sqrt\t X|^2_{g(\t)}
&\le {\rm e}^{(2C_0+1)\t}|V|_{g(0)}^2 + D_0T_0({\rm e}^{(2C_0+1)\t}-1)\\
&\le (C|V|_{g(0)}+D)^2
\end{align*}
for $C=C(C_0, T_0)$ and $D=D(C_0, D_0, T_0)$, and hence
\begin{align*}
d_{g(0)}(p, \gamma_V(\t))
&\le \int_0^\t |X|_{g(0)}\,d\t \le \int_0^\t {\rm e}^{C_0\t}|X|_{g(\t)}\,d\t\\
&\le (C|V|_{g(0)}+D)\int_0^\t \t^{-1/2}\,d\t=
(C|V|_{g(0)}+D)\sqrt\t.
\end{align*}
As a consequence, we can find $\delta = \delta(C, D, r,
|V|_{g(0)}, T_0) >0$ such that (\ref{ineq:escape}) holds
for $\t\in[0, \delta]$.
\end{proof}

\subsection{Asymptotic reduced volume}

Given an ancient super Ricci flow $(M, g(\t)), \t\in[0,
\infty)$, it is natural to expect that the asymptotic
reduced volume $\tilNu(g):= \lim_{\t\to\infty}
\tilV^{g}_{(p, 0)}(\t)$ is well defined, namely it does not
depend on $p\in M$, as the asymptotic volume ratio is. In
this subsection, we prove the following
\begin{lem}\label{basept}
Let $(M^n, g(\t)), \t \in[0, \infty)$ be a complete ancient
super Ricci flow satisfying Assumption~$\ref{as}$ with time
derivative bounded from below. Then for any $(p_k, \t_k)
\in M \times [0, \infty)$ for $k=1, 2$ with $\t_2\ge \t_1$,
we have
\begin{equation*}
\lim_{\t \to \infty} \tilV^{g_2}_{(p_2, 0)}(\t) \ge
\lim_{\t \to \infty} \tilV^{g_1}_{(p_1, 0)}(\t)
\end{equation*}
where $g_k(\t):=g(\t+\t_k), \t\in[0, \infty)$. In
particular, $\tilNu(g)$ is well defined.
\end{lem}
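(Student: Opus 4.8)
Here I reduce the statement to a basepoint comparison for a single ancient flow and carry it out with competitor curves.

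First I would record that by Theorem~\ref{reduced} each $\tilV^{g_k}_{(p_k,0)}(\t)$ is non‑increasing and $\le 1$, so the two limits exist. Writing $a:=\t_2-\t_1\ge0$ we have $g_2(\t)=g(\t+\t_2)=g_1(\t+a)$, and $g_1$ is again a complete ancient super Ricci flow satisfying Assumption~\ref{as} with time derivative bounded below; so it suffices to prove that, for such a flow $g$ and any $p,p'\in M$, the asymptotic reduced volume of $g(\,\cdot\,+a)$ based at $(p',0)$ is at least $\tilNu(g)$ based at $(p,0)$. I would split this into a pure spatial shift ($a=0$, $p\rightsquigarrow p'$) and a pure temporal shift ($p'=p$, $0\rightsquigarrow a$), each an instance of the same statement for a reparametrized flow; applying the result with $\t_1=\t_2$ in both directions then shows $\tilNu(g)$ is independent of the basepoint, hence well defined.

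The engine, for fixed large $\bt$, is Proposition~\ref{epsilon}(2) applied to the $(p,0)$‑geometry: given $r>0$ and a small $\t_0>0$, for $\bt\ge\t_0$ the integral of $u(\cdot,\bt)$ over the compact set $\L K_{\bt,\t_0}(p,r)$ exceeds $\tilV^g_{(p,0)}(\bt)-2\epsilon(r)$, and property (b) there confines every minimal $\L$‑geodesic $\gamma_q$ from $(p,0)$ to such a $q$ to prescribed compact regions at all $\t\le\bt$. In particular $\gamma_q|_{[0,\t_0]}$ stays in one fixed compact spacetime set $\K$, independent of $q$ and $\bt$, on which $h$ and $|\nabla H|$ are bounded, so Proposition~\ref{escape} gives $d_{g(0)}\bigl(p,\gamma_q(\t_0)\bigr)\le(Cr+D)\sqrt{\t_0}$ and a uniform bound on $|\sqrt\t\,\tfrac{d\gamma_q}{d\t}|^2_{g(\t)}$ over $[0,\t_0]$. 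For the spatial shift I would then, for each such $q$, take as competitor from $(p',0)$ to $(q,\bt)$ a constant‑speed $g(0)$‑geodesic bridge from $p'$ to $\gamma_q(\t_0)$ on $[0,\t_0]$ — whose $\L$‑cost is bounded by a constant $A=A(r,\t_0,p,p')$ free of $\bt$ since its $g(0)$‑length is at most $d_{g(0)}(p',p)+(Cr+D)\sqrt{\t_0}$ — followed by $\gamma_q$ itself on $[\t_0,\bt]$, whose $\L$‑length differs from $L^g_{(p,0)}(q,\bt)$ only by the omitted piece over $[0,\t_0]$, again bounded free of $\bt$ via the escape estimate and the bound on $|H|$ over $\K$. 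Dividing by $2\sqrt\bt$ yields $\l^g_{(p',0)}(q,\bt)\le\l^g_{(p,0)}(q,\bt)+o_\bt(1)$ uniformly on $\L K_{\bt,\t_0}(p,r)$, hence $\tilV^g_{(p',0)}(\bt)\ge e^{-o_\bt(1)}\bigl(\tilV^g_{(p,0)}(\bt)-2\epsilon(r)\bigr)$; letting $\bt\to\infty$ and then $r\to\infty$ settles the spatial case.

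The temporal shift runs along the same lines — bridge from $p$ to $\gamma_q(\t_0)$ over backward‑time $[a,a+\t_0]$, then follow an affine reparametrization of $\gamma_q$ on $[a+\t_0,\bt]$ to reach $(q,\bt-a)$ in the $g(\,\cdot\,+a)$‑geometry — but now the $\L$‑length of $g(\,\cdot\,+a)$ weights the second piece by $\sqrt{\t-a}$ instead of $\sqrt\t$ and the reparametrization stretches by a factor $1+O(a/\bt)$, so one picks up a "weight‑change" remainder. On the velocity terms this remainder is $\le0$ (down to an $O(a/\bt)$ multiple of $L^g_{(p,0)}$, which I would absorb using the integrability of $\l$ against the reduced‑volume density $u$), and the real obstacle is its $H$‑contribution: it is controlled — indeed $\le0$ on the dominant part — precisely when $H\ge0$ along the flow. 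Establishing (or citing) that nonnegativity is the key input: for the Ricci flow it is the standard maximum‑principle argument on $-\dertau{}R=\varDelta R+2|\Ric|^2$ using $R\ge-nC$, and for a super Ricci flow it follows from a maximum principle on~(\ref{evol}) together with $|h|^2\ge H^2/n$. Granting it, one again gets $\l^{g(\cdot+a)}_{(p,0)}(q,\bt-a)\le\l^g_{(p,0)}(q,\bt)+o_\bt(1)$ on $\L K_{\bt,\t_0}(p,r)$, and passing to the limits $\bt\to\infty$, $r\to\infty$ yields $\tilNu\bigl(g(\,\cdot\,+a)\bigr)\ge\tilNu(g)$. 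In short, the hard part is keeping every estimate uniform in $\bt$ — which the compact‑set confinement of Proposition~\ref{epsilon}(2) is designed for — and controlling the sign of the weight‑change term, where the nonnegativity of $H$ enters.
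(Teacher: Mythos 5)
Your decomposition into a pure spatial shift and a pure temporal shift is a legitimate reduction (the spatial case gives equality by symmetry, so independence of basepoint follows), and your spatial-shift argument — bridge on $[0,\t_0]$ followed by $\gamma_q$ on $[\t_0,\bt]$, with compact confinement from Proposition~\ref{epsilon}(2) and Proposition~\ref{escape} — is essentially the paper's $\L$-triangle-inequality step. However, your temporal shift via an \emph{affine stretch} of $\gamma_q$ has a gap that the paper's argument is built to avoid. If $\sigma(s)=\gamma_q(\phi(s))$ with $\phi$ an affine bijection of $[\t_0,\bt-a]$ onto $[\t_0,\bt]$, then $\phi(s)\in[s,s+a]$, so the velocity $\gamma_q'(\phi(s))$ must be measured in the shifted metric $g(s+a)$, a \emph{later} time than the time $\phi(s)$ at which this vector naturally lives. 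The only uniform estimate available under the hypotheses is $\dertau{}g\ge -Kg$, which gives $|v|^2_{g(s+a)}\ge {\rm e}^{-Ka}|v|^2_{g(\phi(s))}$ — the wrong direction for bounding $\L^{g_2}(\sigma)$ from above; a two-sided comparison would need an upper bound on $\Ric$, which is not assumed.

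The paper sidesteps this entirely. Sublemma~\ref{timeshift} compares $\L$-lengths of the \emph{same} curve, merely re-indexed by the translation $\t\mapsto\t-\t_\Delta$, so tangent vector and metric always sit at the same time; the comparison then reduces to integrating the pointwise inequality $\sqrt{\t-\t_\Delta}/(2\sqrt{\bt-\t_\Delta})\le\sqrt{\t}/(2\sqrt{\bt})$ against the non-negative integrand $|\gamma'|^2+H$, which is where $H\ge0$ (Proposition~\ref{ancient}) enters, exactly as you identified. It also eliminates the $O(a/\bt)$ multiplicative remainder you propose to absorb through integrability of $\l$ against $u$: with the pure translation the only error is an additive $C(r)\bt^{-1/2}$ from the bridge. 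Finally, the paper handles both shifts simultaneously by taking the midpoint $p_\Delta=\gamma(2\t_\Delta)$ on the minimal $\L^{g_1}$-geodesic — which lies in the fixed compact set $\L^{g_1}K_{2\t_\Delta,2\t_\Delta}(p_1,r)$ by Proposition~\ref{epsilon}(2)(b) — and applying the $\L$-triangle inequality from $(p_2,0)$ through $(p_\Delta,\t_\Delta)$, combined with $(\ref{eq:midpt})$ and the Sublemma. If you replace your stretch by a pure time translation, your outline becomes essentially Sublemma~\ref{timeshift} plus the triangle inequality, and the remaining ingredients you list match the paper's proof.
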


\makeatletter

\renewcommand{\openbox}{}

\makeatother

\begin{proof}
Put $\t_\Delta :=\t_2-\t_1 \ge0$ to notice that
$g_2(\t-\t_\Delta) = g_1(\t)$. We first\break \hbox{verify}
\end{proof}

\makeatletter

\renewcommand{\openbox}{\leavevmode
  \hbox to .77778em{\hfil${\square}$\hfil}}

\makeatother

\begin{sublem}\label{timeshift}
For any $(p, \t_p), (q, \bt)\in M\times [0, \infty)$ with $\bt > \t_p \ge \t_\Delta$,
\begin{equation*}
\frac{1}{2\sqrt{\bt-\t_\Delta}} L^{g_2}_{(p,
\t_p-\t_\Delta)}(q, \bt-\t_\Delta)\le \frac{1}{2\sqrt{\bt}}
L^{g_1}_{(p, \t_p)}(q, \bt)
\end{equation*}
and
\begin{equation*}
\frac{1}{2\sqrt{\bt-\t_\Delta}} L^{g_2}_{(p,
\t_p-\t_\Delta)}(q, \bt-\t_\Delta) \ge
\alpha^{\t_\Delta}_{\t_p, \bt} \frac{1}{2\sqrt{\bt}}
L^{g_1}_{(p, \t_p)}(q, \bt),
\end{equation*}
where $\alpha^{\t_\Delta}_{\t_p, \bt}  := \sqrt{\frac{\t_p-\t_\Delta}{\t_p}\frac{\bt}{\bt-\t_\Delta}} \ge
\sqrt{1-\frac{\t_\Delta}{\t_p}}$.
\end{sublem}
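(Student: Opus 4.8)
The plan is to reduce both inequalities to a single time-shift of curves together with an elementary comparison of the weights $\sqrt\t$ and $\sqrt{\t-\t_\Delta}$. Since $\t_\Delta=0$ is trivial, assume $\t_\Delta>0$, so $\t_p>0$. From $g_2(s)=g(s+\t_2)=g_1(s+\t_\Delta)$ one sees that a curve $\eta\colon[\t_p-\t_\Delta,\bt-\t_\Delta]\to M$ joining $p$ to $q$ is exactly $\eta(s)=\gamma(s+\t_\Delta)$ for a curve $\gamma\colon[\t_p,\bt]\to M$ joining $p$ to $q$, and conversely; moreover $2h=\dertau{}g$ forces $H_{g_2}(\cdot,s)=H_{g_1}(\cdot,s+\t_\Delta)$. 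Hence, substituting $\t=s+\t_\Delta$ in the definition of the $\L$-length,
\begin{equation*}
\L^{g_2}(\eta)=\int_{\t_p}^{\bt}\sqrt{\t-\t_\Delta}\,\Bigl(\bigl|\tfrac{d\gamma}{d\t}\bigr|^{2}_{g_1(\t)}+H_{g_1}(\gamma(\t),\t)\Bigr)\,d\t ,
\end{equation*}
which is exactly $\L^{g_1}(\gamma)=\int_{\t_p}^{\bt}\sqrt\t\,(\,\cdots\,)\,d\t$ with the weight $\sqrt\t$ replaced by $\sqrt{\t-\t_\Delta}$.

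For $0\le\t_\Delta\le\t_p\le\t\le\bt$ one has $\sqrt{1-\t_\Delta/\t_p}\,\sqrt\t\le\sqrt{\t-\t_\Delta}\le\sqrt{(\bt-\t_\Delta)/\bt}\,\sqrt\t$, each reducing after squaring to $\t\ge\t_p$, resp.\ $\t\le\bt$. Provided the integrand $\bigl|\tfrac{d\gamma}{d\t}\bigr|^2+H_{g_1}$ is nonnegative, I may multiply by it and integrate to get $\sqrt{1-\t_\Delta/\t_p}\,\L^{g_1}(\gamma)\le\L^{g_2}(\eta)\le\sqrt{(\bt-\t_\Delta)/\bt}\,\L^{g_1}(\gamma)$. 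For the first inequality of the Sublemma I take $\gamma$ to be a minimal $\L^{g_1}$-geodesic from $(p,\t_p)$ to $(q,\bt)$ (it exists by the lower bound on $\dertau{}g$); then $\eta$ is an admissible competitor for $L^{g_2}_{(p,\t_p-\t_\Delta)}(q,\bt-\t_\Delta)$, so
\begin{equation*}
L^{g_2}_{(p,\t_p-\t_\Delta)}(q,\bt-\t_\Delta)\ \le\ \L^{g_2}(\eta)\ \le\ \sqrt{\tfrac{\bt-\t_\Delta}{\bt}}\;L^{g_1}_{(p,\t_p)}(q,\bt),
\end{equation*}
and division by $2\sqrt{\bt-\t_\Delta}$ gives the claim. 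For the second inequality I take instead $\eta$ to be a minimal $\L^{g_2}$-geodesic from $(p,\t_p-\t_\Delta)$ to $(q,\bt-\t_\Delta)$; then $\gamma$ is a competitor for $L^{g_1}_{(p,\t_p)}(q,\bt)$, so $\L^{g_2}(\eta)\ge\sqrt{1-\t_\Delta/\t_p}\,\L^{g_1}(\gamma)\ge\sqrt{1-\t_\Delta/\t_p}\,L^{g_1}_{(p,\t_p)}(q,\bt)$, and dividing by $2\sqrt{\bt-\t_\Delta}$ and using $\sqrt{1-\t_\Delta/\t_p}/\sqrt{\bt-\t_\Delta}=\alpha^{\t_\Delta}_{\t_p,\bt}/\sqrt{\bt}$ gives the second claim; the bound $\alpha^{\t_\Delta}_{\t_p,\bt}\ge\sqrt{1-\t_\Delta/\t_p}$ follows from $\bt\ge\bt-\t_\Delta$. (When $\t_p=\t_\Delta$ the second inequality degenerates to $L^{g_2}_{(p,0)}(q,\bt-\t_\Delta)\ge0$, which is itself immediate once $H\ge0$.)

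The one point that is not pure bookkeeping is the nonnegativity of $\bigl|\tfrac{d\gamma}{d\t}\bigr|^2+H$, for which it suffices that $H\ge0$ on $M\times[0,\infty)$. This holds for an ancient super Ricci flow satisfying Assumption~\ref{as}: combining $(\ref{evol})$ with $|h|^2\ge H^2/n$ (Cauchy--Schwarz on the trace) yields $-\dertau{}H\ge\Delta H+\tfrac2n H^2$, and the standard maximum-principle/ODE-comparison argument --- the one used for the scalar curvature of ancient Ricci flows --- then forces $H\ge0$, for otherwise $\inf_M H(\cdot,\t)$ would drop to $-\infty$ at a finite $\t$, contradicting that $g(\t)$ is defined for all $\t\ge0$. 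This is the only place where ancientness enters; I expect the only real care needed elsewhere is to keep straight, in each of the two inequalities, which of $\gamma$ and $\eta$ is taken as the minimizer and in which direction the corresponding weight estimate is applied.
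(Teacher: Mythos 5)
Your proof is correct and follows essentially the same route as the paper: identify the $\L^{g_2}$-length of the time-shifted curve as the $\L^{g_1}$-integral with the weight $\sqrt{\t}$ replaced by $\sqrt{\t-\t_\Delta}$, use the nonnegativity of the integrand (which requires $H\ge0$ for ancient super Ricci flows, the paper's Proposition~A.1), and compare the two weights by the elementary pointwise inequalities that reduce to $\t\le\bt$ resp.\ $\t\ge\t_p$. The only cosmetic difference is that you select an explicit minimizer on one side and use the shifted curve as a competitor on the other, whereas the paper phrases the same comparison by passing the inequality through the infimum directly; the weight bounds you use are algebraically identical to the paper's (your $\sqrt{1-\t_\Delta/\t_p}\,\sqrt{\t}\le\sqrt{\t-\t_\Delta}$ rearranges exactly to the paper's $\alpha^{\t_\Delta}_{\t_p,\bt}\frac{\sqrt{\t}}{2\sqrt{\bt}}\le\frac{\sqrt{\t-\t_\Delta}}{2\sqrt{\bt-\t_\Delta}}$).
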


\begin{proof}
We use the fact that $H(\cdot, \t) \ge0$ for ancient super Ricci
flows\break (Proposition~A.1) and the inequality
\begin{equation*}
\frac{1}{2\sqrt{\bt}}\sqrt{\t} \ge
\frac{1}{2\sqrt{\bt-\t_\Delta}}\sqrt{\t-\t_\Delta}\quad
\hbox{ for all } \t_p \le \t \le \bt
\end{equation*}
to obtain
\begin{align*}
\frac{1}{2\sqrt{\bt}} L^{g_1}_{(p, \t_p)}(q, \bt)
&= \frac{1}{2\sqrt{\bt}} \inf_{\gamma} \biggl\{ \int_{\t_p}^{\bt} \sqrt{\t}\( |\gamma^\prime|_{g_1(\t)}^2+H_{g_1(\t)}(\gamma(\t))\) d\t\biggr\}\\
&\ge \frac{1}{2\sqrt{\bt-\t_\Delta}} \inf_{\gamma} \biggl\{ \int_{\t_p}^{\bt} \sqrt{\t-\t_\Delta}\(|\gamma^\prime|_{g_1(\t)}^2+H_{g_1(\t)}(\gamma(\t))\) d\t\!\biggr\}\!\\
&= \frac{1}{2\sqrt{\bt-\t_\Delta}} L^{g_2}_{(p, \t_p-\t_\Delta)}(q, \bt-\t_\Delta).
\end{align*}
Here $\inf$ runs over all curves $\gamma :[\t_p, \bt]\to M$
with $\gamma(\t_p)=p$ and $\gamma(\bt)=q$.

To see the second inequality,
we use instead
\begin{equation*}
\alpha^{\t_\Delta}_{\t_p,
\bt}\frac{1}{2\sqrt{\bt}}\sqrt{\t} \le
\frac{1}{2\sqrt{\bt-\t_\Delta}}\sqrt{\t-\t_\Delta} \hbox{
for all } \t_p \le \t \le \bt.
\end{equation*}
\end{proof}

We return to the proof of the lemma. Fix $r>0$ and
$\bt\gg1$. Take $q\in \K(\bt) := \L^{g_1} K_{\bt,
2\t_\Delta}(p_1, r)$ and the point $p_\Delta
=\gamma(2\t_\Delta) \in M$ on the minimal
$\L^{g_1}$-geodesic $\gamma :[0, \bt]\to M$ from $(p_1, 0)$
to $(q, \bt)$ such that
\begin{align}\label{eq:midpt}
\begin{split}
L^{g_1}_{(p_1, 0)}(q, \bt)
&= L^{g_1}_{(p_\Delta, 2\t_\Delta)}(q, \bt) + L^{g_1}_{(p_1, 0)}(p_\Delta, 2\t_\Delta)\\
&\ge L^{g_1}_{(p_\Delta, 2\t_\Delta)}(q, \bt).
\end{split}
\end{align}
The inequality in~(\ref{eq:midpt}) is due to the
non-negativity of $H$. Recall that $\K := \L^{g_1}
K_{2\t_\Delta, 2\t_\Delta}(p_1, r)$ is compact and
$p_\Delta \in \K$ by construction
(Proposition~\ref{epsilon}). It follows from the
combination of the triangle inequality for $\L$-distance,
Sublemma~\ref{timeshift} and (\ref{eq:midpt}) that
\begin{align*}
\l^{g_2}_{(p_2, 0)}(q, \bt-\t_\Delta)
&\le \frac{1}{2\sqrt{\bt-\t_\Delta}} \(  L^{g_2}_{(p_\Delta, \t_\Delta)} (q, \bt-\t_\Delta) + L^{g_2}_{(p_2, 0)} (p_\Delta, \t_\Delta) \)\\
&\le \frac{1}{2\sqrt{\bt}} L^{g_1}_{(p_\Delta, 2\t_\Delta)} (q, \bt) + \frac{1}{2\sqrt{\bt-\t_\Delta}} \max_{\K} L^{g_2}_{(p_2, 0)} (\cdot, \t_\Delta)\\
&\le \l^{g_1}_{(p_1, 0)}(q, \bt) + C(r)\bt^{-1/2}.
\end{align*}

Thus, as $\bt>0$ is large enough,
\begin{align*}
\lim_{\t\to\infty}\tilV_{(p_2, 0)}^{g_2}(\t)
&\ge \tilV_{(p_2, 0)}^{g_2}(\bt-\t_\Delta) -\epsilon(r)\\
&\ge \int_{\K(\bt)} (4\pi\bt)^{-n/2}\exp\({-\l^{g_2}_{(p_2, 0)}(\cdot, \bt-\t_\Delta)}\) d\mu_{g_2(\bt-\t_\Delta)} -\epsilon(r)\\
&\ge {\rm e}^{-C(r)\bt^{-1/2}} \int_{\K(\bt)} (4\pi\bt)^{-n/2}\exp\({-\l^{g_1}_{(p_1, 0)}(\cdot, \bt)}\) d\mu_{g_1(\bt)} -\epsilon(r)\\
&\ge {\rm e}^{-C(r)\bt^{-1/2}} \tilV^{g_1}_{(p_1, 0)}(\bt) -3\epsilon(r)\\
&\ge {\rm e}^{-C(r)\bt^{-1/2}} \lim_{\t\to\infty}\tilV_{(p_1, 0)}^{g_1}(\t) -3\epsilon(r).
\end{align*}
We have used Proposition~\ref{epsilon} to derive the fourth
inequality. Since $\bt>0$ and $r>0$ are arbitrary, the
proof of Lemma~\ref{basept} is now complete.\hfill$\square$

\subsection{Finiteness of fundamental group}

Now we are ready to establish
Theorem~\ref{finitefundaRicci}. As mentioned in the
introduction, what we intend to show is the following.
\begin{lem}\label{funda}
Let $(M^n, g(\t)), \t \in[0, \infty)$ be a complete ancient
super Ricci flow satisfying Assumption~$\ref{as}$ with time
derivative bounded below. We lift them to the universal
covering $\bM$ of $M$ to obtain the lifted flow $(\bM,
\bg(\t))$. Take $p \in M$ and $\bp \in \pi^{-1}(p)$, where
$\pi : \bM \to M$ is the projection. Suppose that
$\tilNu(g) := \lim_{\t \to \infty}\tilV^g_{(p, 0)}(\t) >0$.
Then we have
\begin{equation*}
|\pi_1(M)| = \tilNu(\bar{g}) \tilNu(g)^{-1} < +\infty.
\end{equation*}
\end{lem}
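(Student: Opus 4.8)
The plan is to relate the reduced volume of $(M,g)$ based at $p$ to that of the universal cover $(\bM,\bg)$ based at $\bp$, by analyzing how $\L$-geodesics and the $\L$-exponential map behave under the covering projection $\pi$. The key observation is that $\pi$ is a local isometry at each time $\t$, so it sends $\L$-geodesics of $\bg$ to $\L$-geodesics of $g$ and vice versa, and moreover $d\pi$ at $\bp$ identifies $T_{\bp}\bM$ with $T_pM$ isometrically with respect to $g(0)$. Under this identification the $\L$-exponential maps satisfy $\pi\circ\L\exp^{\bg}_\t = \L\exp^g_\t$ on $T_{\bp}\bM = T_pM$, and the reduced distances satisfy $\l^{\bg}_{(\bp,0)}(\bq,\t) \ge \l^g_{(p,0)}(\pi(\bq),\t)$, with the minimal $\L$-geodesic downstairs from $(p,0)$ lifting to a (not necessarily minimal) $\L$-geodesic upstairs; hence in fact $\l^{\bg}_{(\bp,0)}$ restricted to the appropriate domain equals $\l^g_{(p,0)}\circ\pi$ along lifts of minimizers. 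The upshot I expect is that the integrand $(4\pi\t)^{-n/2}e^{-\l}\L J$ for $\bg$ at a point $\bq$ over $q$, summed over the fiber $\pi^{-1}(q)$, reproduces the integrand for $g$ at $q$, up to the cut-locus bookkeeping.

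Concretely, I would first establish the fiber-counting identity: for $\t>0$ and for $q$ outside the (measure-zero) projection of bad sets, one has
\begin{equation*}
(4\pi\t)^{-n/2}e^{-\l^g_{(p,0)}(q,\t)} = \sum_{\bq\in\pi^{-1}(q)} (4\pi\t)^{-n/2}e^{-\l^{\bg}_{(\bp,0)}(\bq,\t)},
\end{equation*}
which follows because a minimal $\L^g$-geodesic from $(p,0)$ to $(q,\t)$ has exactly $|\pi_1(M)|$ lifts starting at $\bp$ (one for each deck transformation applied to the endpoint), each realizing $\l^{\bg}$ at its own endpoint in $\pi^{-1}(q)$ — here one uses that the $\L$-distance upstairs is computed by minimizing over lifts of curves downstairs, so the minimal lifted $\L$-geodesic to a given $\bq$ projects to the minimal one downstairs. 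Integrating this identity over a fundamental domain, or equivalently integrating the $g$-side over $M$ and the $\bg$-side over $\bM$, gives $\tilV^{\bg}_{(\bp,0)}(\t) = |\pi_1(M)|\cdot\tilV^g_{(p,0)}(\t)$ for every $\t>0$ — provided $|\pi_1(M)|<\infty$. Taking $\t\to\infty$ then yields $\tilNu(\bg) = |\pi_1(M)|\cdot\tilNu(g)$, and since $\tilNu(g)>0$ by hypothesis and $\tilNu(\bg)\le 1$ by Theorem~\ref{reduced} applied to $(\bM,\bg)$ (which is itself an ancient super Ricci flow satisfying Assumption~\ref{as}), we conclude $|\pi_1(M)| \le \tilNu(g)^{-1} < \infty$ and the claimed equality.

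The main obstacle is the a priori possibility that $\pi_1(M)$ is infinite, in which case the fiber sum above is an infinite series and one cannot simply write $\tilV^{\bg} = |\pi_1(M)|\tilV^g$. To handle this I would argue by truncation: fix a finite subset $F\subset\pi_1(M)$ of deck transformations and restrict attention to the lifts $\{\gamma\cdot\bp : \gamma\in F\}$ of $p$ and the corresponding translated fundamental domains; using Proposition~\ref{epsilon} to capture all but $\epsilon(r)$ of the reduced volume inside a compact $\L$-ball, and the fact that distinct deck translates of this compact set are disjoint for suitable configurations, one obtains $\tilV^{\bg}_{(\bp,0)}(\t) \ge (|F| - o(1))\,\tilV^g_{(p,0)}(\t) - C\epsilon(r)$ for $\t$ large. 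Since $\tilV^{\bg}_{(\bp,0)}(\t)\le 1$ and $\tilV^g_{(p,0)}(\t)\to\tilNu(g)>0$, letting $\t\to\infty$ and $r\to\infty$ forces $|F|\le \tilNu(g)^{-1}$ for every finite $F$, hence $|\pi_1(M)|\le\tilNu(g)^{-1}<\infty$; once finiteness is known, the clean identity $\tilNu(\bg)=|\pi_1(M)|\tilNu(g)$ follows as above. The delicate points to get right are that the $\L$-distance on $\bM$ is genuinely the infimum over lifted curves (so each minimizing lift is an $\L$-geodesic and the projection of a minimizer upstairs is a minimizer downstairs), and the cut-locus/Jacobian bookkeeping — but these are local-isometry statements and should go through as in the Riemannian case.
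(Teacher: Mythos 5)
Your displayed ``fiber-counting identity'' is false as an equality, and the justification offered for it rests on a mistaken lifting fact. A curve in $M$ from $p$ to $q$ has exactly \emph{one} lift to $\bM$ starting at $\bp$, not $|\pi_1(M)|$ of them; the minimal $\L^g$-geodesic therefore lifts to a single $\L^{\bg}$-geodesic which realizes $\l^{\bg}_{(\bp,0)}(\bq_0,\t)=\l^g_{(p,0)}(q,\t)$ for exactly one $\bq_0\in\pi^{-1}(q)$. For every other $\bq\in\pi^{-1}(q)$ the minimal $\L^{\bg}$-geodesic from $\bp$ projects to a non-minimal $\L^g$-geodesic, so $\l^{\bg}_{(\bp,0)}(\bq,\t)>\l^g_{(p,0)}(q,\t)$ strictly. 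Hence the fiber sum satisfies only
$\sum_{\bq\in\pi^{-1}(q)} e^{-\l^{\bg}_{(\bp,0)}(\bq,\t)} \le |\pi_1(M)|\,e^{-\l^g_{(p,0)}(q,\t)}$
(with strict inequality generically), and the relation $\tilV^{\bg}_{(\bp,0)}(\t)=|\pi_1(M)|\,\tilV^g_{(p,0)}(\t)$ does \emph{not} hold at finite $\t$. The correct assertion, and the one the paper proves, is an equality of asymptotic reduced volumes only; your easy bound $\tilV^{\bg}_{(\bp,0)}(\t)\le|\pi_1(M)|\tilV^g_{(p,0)}(\t)$ is sound and gives $\tilNu(\bg)\le|\pi_1(M)|\tilNu(g)$, which is indeed one half of the argument.

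The second half — the lower bound $\tilNu(\bg)\ge|\Gamma|\tilNu(g)$ for each finite $\Gamma\subset\pi_1(M)$ — is where your truncation sketch glosses over the key technical step. You assert $\tilV^{\bg}_{(\bp,0)}(\t)\ge(|F|-o(1))\tilV^g_{(p,0)}(\t)-C\epsilon(r)$ but give no mechanism producing the $o(1)$. What is needed is a quantitative comparison $\l^{\bg}_{(\bp,0)}(\bq,\bt)\le\l^{\bg}_{(\alpha\bp,0)}(\bq,\bt)+C(\delta,\Gamma)\bt^{-1/2}$ for $\bq$ in the $\alpha$-translated $\L$-ball, and the nontrivial point is that the error vanishes as $\bt\to\infty$. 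The paper gets this by building a comparison curve $\hat\gamma$ from $\bp$: for a small $\delta$ chosen via Proposition~\ref{escape}, $\hat\gamma$ follows a $\bg(0)$-geodesic from $\bp$ to $\bar\gamma(\delta)$ on $[0,\delta]$ and then the minimal $\L^{\bg}$-geodesic $\bar\gamma$ from $\alpha\bp$ on $[\delta,\bt]$; the $\sqrt\t$ weight in the $\L$-length makes the extra cost of the initial detour of order $\delta^{3/2}$, so after dividing by $2\sqrt\bt$ the correction is $O(\bt^{-1/2})$. Without this construction (or some substitute), the step ``distinct deck translates of a compact set contribute $\approx\tilV^g$ each'' does not follow, because the relevant reduced distance on $\alpha\bar F$ is $\l^{\bg}_{(\bp,0)}$, not $\l^{\bg}_{(\alpha\bp,0)}$, and you have not shown they are asymptotically equal. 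This comparison-curve estimate is the heart of Lemma~\ref{funda}, and it is missing from your proposal.
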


Before we begin the proof of Lemma~\ref{funda}, let us state the
following immediate corollary, which follows from
Lemma~\ref{funda} combined with Lemma~\ref{Ricciflat}.

\begin{cor}[{\rm \cite{A2,Li}}]\label{AL}
Let $(M, g)$ be a complete Riemmanian manifold with non-negative Ricci curvature
and $(\bM, \bar{g})$ be the universal covering of $(M, g)$.
If $(M, g)$ has Euclidean volume growth, i.e., $\nu(g) >0$, then we have
\begin{equation*}
|\pi_1(M)| =\nu(\bar{g})\nu(g)^{-1} < +\infty.
\end{equation*}
Here, $\nu(g)$ denotes the asymptotic volume ratio as
before.
\end{cor}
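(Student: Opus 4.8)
The plan is to read Corollary~\ref{AL} directly off Lemma~\ref{funda} and Lemma~\ref{Ricciflat}, by regarding the Riemannian manifold $(M,g)$ as a stationary super Ricci flow. Concretely, I would set $g(\t):=g$ for all $\t\in[0,\infty)$. Since $\Ric_g\ge 0$ by hypothesis, one has $\dertau{}g=0\le 2\,\Ric$, so $(M,g(\t))$ is a complete ancient super Ricci flow; with $h\equiv 0$ and $H\equiv 0$ both parts of Assumption~\ref{as} collapse to the trivial identity $0\ge 0$, and the time derivative $\dertau{}g=0$ is plainly bounded below on every compact time interval. The same remarks apply to the lifted flow $(\bM,\bg(\t))$, which is again a complete stationary flow with $\Ric_{\bg}\ge 0$ because the covering projection is a local isometry, so completeness and the lower Ricci bound lift to $\bM$. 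Thus both flows satisfy the hypotheses of Lemma~\ref{funda}.

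Next I would apply Lemma~\ref{Ricciflat}, which for a stationary flow of non-negative Ricci curvature identifies the asymptotic reduced volume with the asymptotic volume ratio: $\tilNu(g)=\nu(g)$ and $\tilNu(\bg)=\nu(\bg)$. In particular the Euclidean volume growth assumption $\nu(g)>0$ becomes $\tilNu(g)>0$, so Lemma~\ref{funda} applies and yields
\[
|\pi_1(M)|=\tilNu(\bg)\,\tilNu(g)^{-1}=\nu(\bg)\,\nu(g)^{-1}<+\infty ,
\]
which is exactly the statement of Corollary~\ref{AL}. Finiteness is already contained in the conclusion of Lemma~\ref{funda}; alternatively one may note directly that Bishop--Gromov gives $\nu(\bg)\le 1$, whence $\nu(\bg)\nu(g)^{-1}\le \nu(g)^{-1}<\infty$.

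There is no genuine obstacle here: the corollary is a translation of Lemma~\ref{funda} through the dictionary provided by Lemma~\ref{Ricciflat}. The only items deserving a line of justification are that a constant-in-time metric is admissible for the reduced-geometry machinery of Sections~2 and~3 (immediate, since every $h$- and $H$-dependent quantity vanishes), and that completeness and the lower Ricci bound pass to the universal cover so that Lemma~\ref{Ricciflat} may be invoked on $\bM$ as well. Granting these, Corollary~\ref{AL} follows at once.
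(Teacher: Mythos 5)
Your proposal is correct and is exactly the route the paper takes: the paper states Corollary~\ref{AL} as an immediate consequence of Lemma~\ref{funda} combined with Lemma~\ref{Ricciflat}, and your argument simply spells out the dictionary (stationary super Ricci flow, $\tilNu=\nu$, lift to the universal cover) that makes this precise. Nothing more is needed.
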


\begin{proof}[Proof of Lemma~$\ref{funda}$]
The proof is a modification of that of \cite[Theorem 1.1]{A2}.
Fix large $\bt \in(0, \infty)$ and define
\begin{equation*}
F := \bigcap_{\alpha\in \pi_1(M) \setminus \{e\}} \Bigl\{ \bq \in \bM ;\  L^{\bar{g}}_{(\bp, 0)}(\bq, \bt) < L^{\bar{g}}_{(\alpha\bp, 0)}(\bq, \bt) \Bigr\}.
\end{equation*}
Then $F$ is a fundamental domain of $\pi:\bM\to M$, namely
\begin{equation*}
F \cap \alpha F = \emptyset\quad \hbox{ for } \alpha \in
\pi_1(M)\setminus\{e\}\quad \hbox{and}\enspace
\bigcup_{\alpha\in \pi_1(M)} \alpha\bar{F} = \bM.
\end{equation*}

We claim that $\pi : \bar{F} \to M$ is locally isometric
and surjective. To see this, pick $q \in M$ and connect
$(p, 0)$ and $(q, \bt)$ by a minimal $\L^{g}$-geodesic
$\gamma :[0, \bt] \to M$. Then the lift $\bar{\gamma}$ of
$\gamma$ with $\bar{\gamma}(0)=\bp$ is a minimal
$\L^{\bar{g}}$-geodesic in $\bM$. Let $\bq :=
\bar{\gamma}(\bt)$. Then we have that $\bq \in \bar{F}$
and~$\pi(\bq)=q$.

Furthermore, $\bar{F}\setminus F$ has measure 0, since
$\pi(\bar{F}\setminus F)$ consists of the points in $M$
such that minimal $\L^{g}$-geodesic from $(p, 0)$ is not
unique. The set of such points has measure 0~\cite[Lemma
7.99]{V2}.

Fix any finite subset $\Gamma \subset \pi_1(M)$ and set
$D_\Gamma := \max\{ d_{\bg(0)}(\bp, \alpha\bp) ; \alpha \in
\Gamma \}$. Take $C_0<\infty$ such that $|h|\le C_0$ on
$B_{\bg(0)}(\bp, D_\Gamma+1)\times [0, 1]$ and $|\nabla
H|^2 \le C_0$ on $B_{g(0)}(p, 1)\times [0, 1]$. Fix $r>0$.
Due to Proposition~\ref{escape}, we can find $\delta
=\delta(C_0, r) >0$ such that
$d_{\bg(0)}(\bar{\gamma}_V(\t), \alpha\bp) \le 1$ for any
$\L^{\bg}$-geodesic $\bar{\gamma}_V$ starting from
$\alpha\bp$ with $|V|_{\bg(0)} < r$ and $\t \in
[0,~\delta]$.

For any $\alpha\kern-1pt \in\kern-1pt  \Gamma$ and $\bq \in \L B_{\bt}(\alpha
\bp, r) \cap \alpha\bar{F}$, let $\bar{\gamma}$ be the
minimal $\L^{\bg}$-geodesic from $(\alpha\bp, 0)$ to $(\bq,
\bt)$ in $\bM$ and connect $\bp$ and $\bar{\gamma}(\delta)$
by a minimal $\bg(0)$-geodesic $\xi_{\bp,
\bar{\gamma}(\delta)} :[0, \delta]\to \bM$. Define a curve
$\hat{\gamma}: [0, \bt]\to \bM$ by
\begin{equation*}
\hat{\gamma}(\t) :=
\begin{cases}
\xi_{\bp, \bar\gamma(\delta)}(\t) &\hbox{ on } [0, \delta]\\
\bar{\gamma}(\t) &\hbox{ on } [\delta, \bt]
\end{cases}
\end{equation*}
Then, letting $q:=\pi(\bq)$,
\begin{align*}
\l^{\bg}_{(\bp, 0)}(\bq, \bt)
&\le \frac{1}{2\sqrt{\bt}} \L^{\bg}(\hat{\gamma})\\
&= \frac{1}{2\sqrt{\bt}} \( \L^{\bg}(\bar\gamma) - \L^{\bg}(\bar\gamma|_{[0, \delta]}) + \L^{\bg}(\xi_{\bp, \bar\gamma(\delta)}) \)\\
&\le \l^{\bg}_{(\alpha\bp, 0)}(\bq, \bt) + \frac{1}{3\sqrt{\bt}} {\delta}^{3/2} \( {\rm e}^{2C_0\delta}\(\frac{D_\Gamma+1}{\delta}\)^2+2nC_0 \)\\
&= \l^{g}_{(p, 0)}(q, \bt) + C(\delta, \Gamma) \bt^{-1/2}
\end{align*}
where we have used that
\begin{equation*}
\l^{\bg}_{(\alpha\bp, 0)}(\bq, \bt) = \l^{g}_{(p, 0)}(q,
\bt)\quad \hbox{for any } \bq \in \L B_{\bt}(\alpha\bp, r)
\cap \alpha\bar{F}.
\end{equation*}

We apply Proposition~\ref{epsilon} to obtain that
\begin{align*}
\tilV^{\bg}_{(\bp, 0)}(\bt)
&\ge \sum_{\alpha \in \Gamma} \int_{ \L B_\t(\alpha\bp, r) \cap \alpha\bar{F} } (4\pi\bt)^{-n/2}\exp\(-\l^{\bar{g}}_{(\bp, 0)}(\cdot, \bt)\) d\mu_{\bar{g}(\bt)}\\
&\ge |\Gamma| \int_{\L B_\t(p, r)} (4\pi\bt)^{-n/2}\exp\(-\l^g_{(p, 0)}(\cdot, \bt)- C(\delta, \Gamma) \bt^{-1/2}\) d\mu_{g(\bt)}\\
&\ge {\rm e}^{-C(\delta, \Gamma) \bt^{-1/2}}|\Gamma|
\(\tilV^{g}_{(p, 0)}(\bt) -\epsilon(r) \)
\end{align*}
and taking $\bt \to \infty$ and $r \to \infty$ yields that
\begin{equation}\label{finitefunda}
\tilNu(\bg) \ge |\Gamma| \tilNu(g) \hbox{ for any finite
subset } \Gamma \subset \pi_1(M).
\end{equation}
Thus, $\pi_1(M)$ is finite and~(\ref{finitefunda}) holds
for $\Gamma = \pi_1(M)$.

On the other hand, since
\begin{equation*}
\l^{\bg}_{(\bp, 0)}(\bq, \t) \ge \l^{g}_{(p, 0)}(\pi(\bq),
\t) \hbox{ for any } (\bq, \t) \in \bM \times (0,
\infty)
\end{equation*}
we have
\begin{equation*}
\tilV^{\bg}_{(\bp, 0)}(\t) \le |\pi_1(M)| \tilV^{g}_{(p, 0)}(\t)
\end{equation*}
and hence $\tilNu(\bg) \le |\pi_1(M)| \tilNu(g)$. This
finishes the proof of the lemma.
\end{proof}

We close this subsection by giving another corollary of
Lemma~\ref{funda}.
\begin{cor}\label{kappafunda}
Any ancient $\kappa$-solution to the Ricci flow has finite fundamental group.
\end{cor}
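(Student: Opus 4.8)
The plan is to deduce the corollary immediately from Lemma~\ref{funda}. Recall that an ancient $\kappa$-solution is, by definition, a complete ancient solution $(M^n, g(t)),\ t\in(-\infty, 0]$, to the Ricci flow with bounded nonnegative curvature operator that is $\kappa$-noncollapsed on all scales. Passing to the reverse time $\t:=-t$, we obtain a complete ancient backward Ricci flow $(M^n, g(\t)),\ \t\in[0, \infty)$. Being a Ricci flow, it satisfies Assumption~\ref{as} with $h=\Ric$ --- the contracted second Bianchi identity $2\,{\rm div}\,\Ric = \nabla R$ and the evolution equation $-\dertau{}R=\varDelta_{g(\t)} R+2|\Ric|^2$ recorded in Section~2 --- and, since its curvature is nonnegative and bounded on each compact time interval, $\dertau{}g = 2\Ric$ is bounded from below (indeed $\ge0$) there. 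Hence all hypotheses of Lemma~\ref{funda} are satisfied except possibly for the positivity $\tilNu(g)>0$; once this is verified, Lemma~\ref{funda} gives $|\pi_1(M)| = \tilNu(\bg)\,\tilNu(g)^{-1}<+\infty$, which is the assertion.

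It thus remains to recall why $\tilNu(g)>0$ for an ancient $\kappa$-solution; this is classical, due to Perelman \cite[Section~7]{P} (see also \cite{KL}). Fix $p\in M$. For every $\t>0$ there is a point $q_\t\in M$ with $\l_{(p, 0)}(q_\t, \t)\le \frac{n}{2}$, the standard minimum estimate for the reduced distance. Since $H=R\ge0$ for ancient super Ricci flows (Proposition~A.1) and the trace Harnack expression of a $\kappa$-solution is nonnegative (so that the quantity $K$ in~(\ref{Hess}) is nonnegative), the right-hand side of~(\ref{Hess}) gives $|\nabla\l_{(p, 0)}|^2\le \l_{(p, 0)}/\t$; hence $|\nabla\sqrt{\l_{(p, 0)}(\cdot, \t)}|\le \frac{1}{2\sqrt\t}$, and therefore $\l_{(p, 0)}(\cdot, \t)\le c_1(n)$ on the geodesic ball $B_{g(\t)}(q_\t, \sqrt\t)$. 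Combining this with the lower bound for the volume of that ball furnished by $\kappa$-noncollapsing (applied at the appropriate scale, as in \cite{P}), one obtains
\begin{equation*}
\tilV_{(p, 0)}(\t)\ \ge\ \int_{B_{g(\t)}(q_\t,\, \sqrt\t)}(4\pi\t)^{-n/2}{\rm e}^{-\l_{(p, 0)}(\cdot,\, \t)}\,d\mu_{g(\t)}\ \ge\ c(n, \kappa)>0
\end{equation*}
uniformly in $\t>0$, whence $\tilNu(g)=\lim_{\t\to\infty}\tilV_{(p, 0)}(\t)\ge c(n, \kappa)>0$.

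Assembling the two paragraphs completes the proof. The only step that is not purely formal is the positivity $\tilNu(g)>0$: it rests on the existence of the point $q_\t$ with $\l_{(p, 0)}(q_\t, \t)\le \frac{n}{2}$, on Hamilton's trace Harnack inequality~\cite{Ha-Ha}, and on $\kappa$-noncollapsing, and is precisely Perelman's argument, which may simply be quoted. Everything else --- the verification of Assumption~\ref{as}, the lower bound on $\dertau{}g$, and the final application of Lemma~\ref{funda} --- is immediate.
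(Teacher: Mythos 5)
Your proof is correct and follows the same route as the paper: both reduce Corollary~\ref{kappafunda} to Lemma~\ref{funda} together with the fact that ancient $\kappa$-solutions have $\tilNu(g)>0$. The paper simply cites \cite[Lemma~8.38]{V2} for that positivity, whereas you sketch Perelman's underlying argument (the bound $\l(q_\t,\t)\le n/2$, the gradient estimate from~(\ref{Hess}) with $K\ge0$ via the trace Harnack inequality, and $\kappa$-noncollapsing); this is a legitimate expansion of the citation, not a different approach.
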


The proof is immediate since any ancient $\kappa$-solution has
positive\break asymptotic reduced volume~\cite[Lemma 8.38]{V2}.
Meanwhile, Perelman has\break shown that {\it any ancient
$\kappa$-solution has zero asymptotic volume ratio}
$\nu(g(\t))$\break \cite[Proposition 11.4]{P} (cf. \cite{CN}). This is
why Corollary~\ref{kappafunda} does not follow from
Corollary~\ref{AL}, but from Lemma~\ref{funda}.
See~\cite[Definition 8.31]{V2} for the definition of ancient
$\kappa$-solution.

\subsection{Reduced volume under Cheeger--Gromov convergence}

Although we have considered the super Ricci flow so far,
Theorem \ref{main} is not true for them. From now on, we
concentrate on the Ricci flow. To begin with, let us recall
Shi's gradient estimate. Shi's derivative estimate was also
employed in the proof of the compactness theorem for the
Ricci flow~\cite{Ha-Comp}, which we will use later.

\begin{thm}[{\rm (Shi's local gradient estimate {\cite[Theorem 13.1]{Ha-Form}})}]\label{Shi}
There exists a constant $C(n)\kern-1pt <\kern-1pt\infty$ satisfying the following:
let $(M^n, g(\t)), \t\in[0, T_0]$ be a complete backward Ricci flow on an $n$-manifold $M$. Assume that the ball $B_{g(T_0)}(p, r)$ is contained in $\mathcal{K}$ and $|\Rm| \le C_0$ on $\mathcal{K}\times [0, T_0]$ for some compact set $\mathcal{K} \subset M$.
Then for $\t \in[0, T_0)$,
\begin{equation}
|\nabla \Rm|^2(p, \t) \le C(n)C_0^2 \(\frac{1}{r^2} +
\frac{1}{T_0-\t} + \frac{1}{C_0} \).
\end{equation}
\end{thm}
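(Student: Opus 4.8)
The estimate is local and parabolic, of Bernstein--Bando--Shi type, and the natural proof is a maximum-principle argument on a parabolic cylinder applied to a well-chosen combination of $|\Rm|^2$ and $|\nabla\Rm|^2$, cut off in space and weighted by a power of time. First I would pass to the forward Ricci flow $g(t):=g(T_0-\t)$, $t\in[0,T_0]$, which satisfies $\partial_t g=-2\Ric$; fixing the target time $t_*:=T_0-\t\in(0,T_0]$, it suffices to bound $|\nabla\Rm|^2(p,t_*)$ by running the whole argument on $M\times[0,t_*]$, so that the factor $(T_0-\t)^{-1}$ in the statement is just the familiar $t_*^{-1}$. The hypotheses become: $|\Rm|\le C_0$ on $\mathcal K\times[0,t_*]$ and $B_{g(0)}(p,r)\subset\mathcal K$.

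Next I would record the standard evolution inequalities under the Ricci flow,
\begin{equation*}
\partial_t|\Rm|^2\le\Delta|\Rm|^2-2|\nabla\Rm|^2+c_n|\Rm|^3,\qquad \partial_t|\nabla\Rm|^2\le\Delta|\nabla\Rm|^2-2|\nabla^2\Rm|^2+c_n|\Rm|\,|\nabla\Rm|^2,
\end{equation*}
and form, on the region $\{|\Rm|\le C_0\}$, the product $G:=(\Lambda C_0^2+|\Rm|^2)\,|\nabla\Rm|^2$ with $\Lambda=\Lambda(n)$ a large dimensional constant. The point of taking a \emph{product} rather than a sum is that differentiating $G$ produces, from $|\nabla\Rm|^2\cdot\partial_t|\Rm|^2$, a genuinely quadratic favourable term $-2|\nabla\Rm|^4$; choosing $\Lambda$ large then lets one absorb the mixed gradient term $-2\nabla|\Rm|^2\cdot\nabla|\nabla\Rm|^2$ into this term and into $-2(\Lambda C_0^2+|\Rm|^2)|\nabla^2\Rm|^2$ via Young's inequality. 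One is left, on that region, with an inequality of the shape
\begin{equation*}
\partial_t G\le\Delta G-\frac{G^2}{C(n)C_0^4}+C(n)C_0^6 .
\end{equation*}

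Then I would choose a cutoff $\phi(\cdot,t)=\psi\bigl(d_{g(t)}(p,\cdot)/r\bigr)$ with $\psi$ a fixed bump, $\psi\equiv1$ on $[0,\tfrac12]$, supported in $[0,1)$, with $(\psi')^2/\psi$ and $|\psi''|$ bounded. After a short continuity argument one may assume $\mathrm{supp}\,\phi(\cdot,t)\subset\mathcal K$ for all $t\in[0,t_*]$, so the inequality for $G$ holds on $\mathrm{supp}\,\phi$; the Laplacian comparison (using $\Ric\ge-(n-1)C_0$ there) gives $|\nabla\phi|^2\le C\phi/r^2$ and $|\Delta\phi|\le C(n)(r^{-2}+C_0)$, and $|\partial_t d_{g(t)}(p,\cdot)|\le(n-1)C_0\,d$ along minimizing geodesics gives $|\partial_t\phi|\le C(n)C_0$ (with the Calabi trick at the cut locus). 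Applying the maximum principle to $F:=t\,\phi\,G$ on $M\times[0,t_*]$: at an interior maximum $(x_0,t_0)$ with $t_0>0$ (the cases $t_0=0$ or $\phi(x_0,t_0)=0$ force $F\le0$), the relations $\nabla F=0$, $\Delta F\le0$, $\partial_t F\ge0$, combined with the inequality for $G$ and, after multiplying through by $t\phi$, the bounds above, yield a quadratic inequality for $F_0:=F(x_0,t_0)$ of the form $F_0^2\le C(n)C_0^4\,F_0\,(1+C_0t_*+t_*/r^2)+C(n)C_0^{10}t_*^2$, whence $F_0\le C(n)C_0^4(1+C_0t_*+t_*/r^2)$. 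Evaluating $F$ at $(p,t_*)$, where $\phi=1$, and using $G\ge C_0^2|\nabla\Rm|^2$, gives the asserted bound for $|\nabla\Rm|^2(p,t_*)$, hence for $|\nabla\Rm|^2(p,\t)$ after the substitution $t_*=T_0-\t$.

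The main obstacle is the interplay between the algebra and the localization: one must choose $G$ so that its evolution carries a quadratic favourable term (this is the reason for the product form), and simultaneously keep the spatial and temporal cutoff errors — in particular the time derivative of the distance function and its failure to be smooth at the cut locus — under control with only dimensional constants, so that no exponential factor in $C_0t_*$ creeps in. Once $G$ and $\phi$ are correctly set up, the curvature and scaling bookkeeping in the final quadratic inequality is routine.
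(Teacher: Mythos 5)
The paper itself gives no proof of this theorem: it is quoted verbatim from Hamilton \cite[Theorem 13.1]{Ha-Form}, so there is no internal argument to compare against. Your proposal is an accurate reconstruction of the standard Bernstein--Bando--Shi proof. The product auxiliary function $G=(\Lambda C_0^2+|\Rm|^2)\,|\nabla\Rm|^2$ with $\Lambda=\Lambda(n)$ large, the absorption of the cross term $-2\nabla|\Rm|^2\cdot\nabla|\nabla\Rm|^2$ into the favourable terms $-2(\Lambda C_0^2+|\Rm|^2)|\nabla^2\Rm|^2$ and $-2|\nabla\Rm|^4$ via Young's inequality, the resulting closed inequality $\partial_t G\le\Delta G-G^2/(C(n)C_0^4)+C(n)C_0^6$ on $\{|\Rm|\le C_0\}$, and the maximum principle applied to $t\phi G$ with Calabi's trick at the cut locus --- these are precisely the ingredients of Shi's and Hamilton's arguments, and the bookkeeping at the maximum point does close up.

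Two caveats. First, the inequality as printed in the paper is dimensionally (and hence parabolically) inconsistent: under $g\mapsto Qg$ the left side $|\nabla\Rm|^2$ scales like $Q^{-3}$, as do $C_0^2/r^2$ and $C_0^2/(T_0-\t)$, but $C_0^2\cdot C_0^{-1}=C_0$ scales only like $Q^{-1}$. Hamilton's Theorem 13.1 has $+C_0$, not $+C_0^{-1}$, in the last slot, and that is exactly what your quadratic inequality $F_0\le C(n)C_0^4(1+C_0 t_*+t_*/r^2)$ produces once you divide by $t_*\Lambda C_0^2$, namely $|\nabla\Rm|^2(p,\t)\le C(n)C_0^2\bigl(r^{-2}+(T_0-\t)^{-1}+C_0\bigr)$. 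So the bound you actually derive is the correct one; the printed statement has a typo, and your final sentence should flag the discrepancy rather than claim agreement. Second, the step you compress into a ``short continuity argument'' --- keeping $\mathrm{supp}\,\phi(\cdot,t)\subset\mathcal K$ for all $t\in[0,t_*]$ while bounding $|\partial_t\phi|$ and $|\Delta\phi|$ by \emph{dimensional} multiples of $C_0$ and $r^{-2}$, so that no factor $e^{C(n)C_0 t_*}$ contaminates the constant --- is the genuinely delicate part of a sharp local Shi estimate. If one na\"ively uses the metric comparison $e^{-2(n-1)C_0 t}g(0)\le g(t)\le e^{2(n-1)C_0 t}g(0)$ to control a $g(0)$-based cutoff, exponentials do creep in. You correctly identify this as ``the main obstacle,'' but the proposal does not actually surmount it; that is where most of the work in Hamilton's Section 13 lies.
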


Recall that we say that a sequence of pointed backward
Ricci flows
\begin{equation*}
\{(M_k^n, g_k(\t), p_k)\}_{k\in \Z}, \t\in [0, T)
\end{equation*}
converges to a backward Ricci flow $(M_\infty^n,
g_\infty(\t), p_\infty), \t\in[0, T)$ in the $C^\infty$
Cheeger--Gromov sense if there exist open sets $U_k$ of
$M_\infty$ with $p_\infty \in U_k$ and $\cup_{k\in \Z}U_k =
M_\infty$ and deffeomorphisms $\Phi_k : U_k \to V_k:=
\Phi_k(U_k) \subset M_k$ with $\Phi_k(p_\infty)=p_k$ so
that $\{(U_k, \Phi_k^*g_k(\t))\}_{k\in \Z}$ converges to
$(M_\infty^n, g_\infty(\t))$ in the $C^\infty$ topology on
each compact set of $M_\infty^n \times [0, T)$.

By carefully investigating the proof of~\cite[Lemma
7.66]{V2}, where curvature is assumed to be bounded on the
whole of $M_k\times [0, T)$, one can show the following
lemma without modification (cf. [8, Lemma 7.66]).
\begin{lem}\label{limdis}
Let $\{(M_k^n, g_k(\t), p_k)\}_{k\in\Z}, \t\in [0, T)$ be a
converging sequence of pointed backward Ricci flows in the
sense of $C^\infty$ Cheeger--Gromov and $(M_\infty^n,
g_\infty(\t), p_\infty), \t\in[0, T)$ be the limit. Then we
have
\begin{equation}\label{limsupdis}
\limsup_{k \to \infty} \l_{(p_k, 0)}^{g_k}(\Phi_k(q), \t) \le \l_{(p_\infty, 0)}^{g_\infty}(q, \t)
\end{equation}
for $\t\in(0, T)$. The equality is achieved in
$(\ref{limsupdis})$, with $\limsup$ replaced by $\lim$,
provided $(\Phi_k(q), \t)$ can be joined to $(p_k, 0)$ by a
minimal $\L^{g_k}$-geodesic within the image
$\Phi_k(\mathcal{K}) \subset M_k$ of some compact set
$\mathcal{K} \subset M_\infty$ for all large $k\in\Z$.
\end{lem}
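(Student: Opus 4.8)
The plan is to prove the two assertions of Lemma~\ref{limdis} separately, both by exploiting the $C^\infty$ Cheeger--Gromov convergence to pull curves between the limit flow and the approximating flows.

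\smallskip

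For the inequality $(\ref{limsupdis})$, fix $q\in M_\infty$ and $\t\in(0,T)$, and fix a competitor curve: let $\gamma:[0,\t]\to M_\infty$ be any curve with $\gamma(0)=p_\infty$, $\gamma(\t)=q$ realizing (up to an arbitrary $\varepsilon>0$) the infimum defining $L^{g_\infty}_{(p_\infty,0)}(q,\t)$. Such a $\gamma$ has compact image, so it lies in some compact $\mathcal{K}\subset M_\infty$, and hence in $U_k$ for all large $k$; push it forward to $\gamma_k:=\Phi_k\circ\gamma$ in $M_k$, a curve from $p_k$ to $\Phi_k(q)$. Then $L^{g_k}_{(p_k,0)}(\Phi_k(q),\t)\le \L^{g_k}(\gamma_k)$, and since $\Phi_k^*g_k(\t)\to g_\infty(\t)$ in $C^\infty$ uniformly on $\mathcal{K}\times[0,\t]$ (which also gives convergence of $H_{g_k}$, as $H$ is built from two derivatives of the metric, via the backward Ricci flow equation), the $\L$-lengths converge: $\L^{g_k}(\gamma_k)\to\L^{g_\infty}(\gamma)$. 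Taking $\limsup_k$, then letting $\varepsilon\to 0$, gives $\limsup_k L^{g_k}_{(p_k,0)}(\Phi_k(q),\t)\le L^{g_\infty}_{(p_\infty,0)}(q,\t)$, and dividing by $2\sqrt{\t}$ yields $(\ref{limsupdis})$. One technical point to check: the $\L$-length integrand has the factor $\sqrt{\t}$ and is only integrable near $0$ because a good competitor behaves like a reparametrized geodesic; it suffices to take $\gamma$ to be (close to) a minimal $\L^{g_\infty}$-geodesic, along which $|\gamma'|^2_{g_\infty(\t)}$ is integrable against $\sqrt{\t}\,d\t$, and then the pushed-forward curves inherit a uniform-in-$k$ integrable bound from $C^0$-closeness of the metrics.

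\smallskip

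For the reverse inequality under the stated hypothesis, suppose that for all large $k$ the point $(\Phi_k(q),\t)$ is joined to $(p_k,0)$ by a minimal $\L^{g_k}$-geodesic $\gamma_k$ whose image lies in $\Phi_k(\mathcal{K})$ for a fixed compact $\mathcal{K}\subset M_\infty$. Pull these back to curves $\beta_k:=\Phi_k^{-1}\circ\gamma_k:[0,\t]\to\mathcal{K}$ from $p_\infty$ to $q$. The key estimate is a uniform bound on the $\L^{g_k}$-length of $\gamma_k$: by the already-established inequality $(\ref{limsupdis})$ together with the convergence of metrics one gets $\L^{g_k}(\gamma_k)=L^{g_k}_{(p_k,0)}(\Phi_k(q),\t)\le L^{g_\infty}_{(p_\infty,0)}(q,\t)+o(1)$, so the $\beta_k$ have uniformly bounded $\L^{g_\infty}$-length, hence (using $H_{g_\infty}\ge 0$ for ancient flows, Proposition~A.1, or simply the lower bound on $H$ on the compact set $\mathcal{K}\times[0,\t]$) uniformly bounded energy $\int_0^\t\sqrt{\t}|\beta_k'|^2_{g_\infty(\t)}\,d\t$. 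A standard Arzel\`a--Ascoli / weak compactness argument then extracts a subsequential limit curve $\beta_\infty:[0,\t]\to\mathcal{K}$ from $p_\infty$ to $q$ with $\L^{g_\infty}(\beta_\infty)\le\liminf_k\L^{g_k}(\gamma_k)$ by lower semicontinuity of the energy functional under weak convergence. Therefore $L^{g_\infty}_{(p_\infty,0)}(q,\t)\le\L^{g_\infty}(\beta_\infty)\le\liminf_k L^{g_k}_{(p_k,0)}(\Phi_k(q),\t)$, which combined with $(\ref{limsupdis})$ upgrades $\limsup$ to $\lim$ and gives equality.

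\smallskip

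The main obstacle is the second part: controlling the minimizers $\gamma_k$ near the initial time $\t=0$, where the $\L$-geodesic equation $(\ref{geod})$ is singular and the curves $\gamma_V$ are parametrized so that $\sqrt{\t}\,\gamma'$ has a limit. One must verify that the pulled-back curves $\beta_k$ do not oscillate wildly as $\t\to 0$ and that the limit $\beta_\infty$ is itself an admissible competitor with finite $\L^{g_\infty}$-length; this is exactly where the uniform energy bound on a fixed compact set $\mathcal{K}$ is used, and why the hypothesis confining $\gamma_k$ to $\Phi_k(\mathcal{K})$ cannot be dropped. The needed lower semicontinuity and compactness are essentially the statements proved in the bounded-curvature setting of~\cite[Lemma~7.66]{V2}; as the excerpt notes, once curvature is controlled on the relevant compact set (guaranteed here by the hypothesis), the argument there goes through without change.
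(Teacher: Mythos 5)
Your proposal is correct and follows the same route the paper implicitly takes: the paper does not write out a proof of Lemma~\ref{limdis} but defers to \cite[Lemma 7.66]{V2}, noting only that the global curvature bound assumed there can be replaced by the hypothesis confining the minimizers to $\Phi_k(\mathcal{K})$; your two-step argument (push curves forward for $\limsup$, pull the minimizers back and extract a limit via the energy bound / Arzel\`a--Ascoli / lower semicontinuity for $\liminf$) is precisely that argument, and you correctly identify the compactness hypothesis as the substitute for bounded curvature near $\t=0$.
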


Now we verify the convergence of reduced volumes.
\begin{lem}\label{limvol}
Let $\{(M^n_k, g_k(\t), p_k)\}_{k\in \Z}, \t\in [0, T)$ be
a sequence of pointed backward Ricci flows converging to
$(M^n_\infty, g_\infty(\t), p_\infty)$. Assume that
\begin{equation*}
|\Rm| \le C_0 \hbox{ on } V_k \times [0, T) \hbox{ and }
\bigcup_{k\in \Z}U_k =M_\infty.
\end{equation*}
Then for any $\t \in (0, T)$,
\begin{equation}
\lim_{k \to \infty} \tilV_{(p_k, 0)}^{g_k}(\t) = \tilV_{(p_\infty, 0)}^{g_\infty}(\t).
\end{equation}
\end{lem}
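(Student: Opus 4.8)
The plan is to prove convergence $\tilV_{(p_k, 0)}^{g_k}(\t) \to \tilV_{(p_\infty, 0)}^{g_\infty}(\t)$ by splitting each integral into a piece over a large $\L$-ball and a tail, and then passing to the limit on the main piece via Lemma~\ref{limdis}. First I would fix $\t \in (0, T)$ and, for a large parameter $r > 0$, use Proposition~\ref{epsilon}(1) to control the tails uniformly: for each $k$ (including $k = \infty$), one has
\begin{equation*}
\tilV_{(p_k, 0)}^{g_k}(\t) - \epsilon(r) \le \int_{\L B_\t^{g_k}(p_k, r)} u^{g_k}(\cdot, \t)\, d\mu_{g_k(\t)} \le \tilV_{(p_k, 0)}^{g_k}(\t),
\end{equation*}
where $\epsilon(r)$ is the universal function from Proposition~\ref{epsilon} depending only on $n$ and $r$. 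So it suffices to show the integral over $\L B_\t^{g_k}(p_k, r)$ converges, as $k \to \infty$, to the corresponding integral over $\L B_\t^{g_\infty}(p_\infty, r)$, and then let $r \to \infty$.

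Next I would transplant everything to a fixed manifold. Using the curvature bound $|\Rm| \le C_0$ on $V_k \times [0, T)$ together with Shi's estimate (Theorem~\ref{Shi}) to bound $|\nabla \Rm|$ on slightly smaller sets, and applying Proposition~\ref{escape}, one sees that $\L$-geodesics from $(p_k, 0)$ with initial data $|V|_{g_k(0)} < r$ stay, for a definite time, inside a compact set independent of $k$; more globally, the image $\L B_\t^{g_k}(p_k, r)$ is contained in $\Phi_k(\K)$ for a fixed compact $\K \subset M_\infty$ and all large $k$ (one obtains this by bounding $d_{g_k(0)}(p_k, \gamma_V(\t))$ along the whole geodesic via the $\L$-geodesic equation as in the proof of Proposition~\ref{escape}). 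Pulling back by $\Phi_k$, I would rewrite
\begin{equation*}
\int_{\L B_\t^{g_k}(p_k, r)} u^{g_k}(\cdot, \t)\, d\mu_{g_k(\t)} = \int_{\K} (4\pi\t)^{-n/2} \exp\(-\l_{(p_k, 0)}^{g_k}(\Phi_k(q), \t)\) \chi_k(q)\, d\mu_{\Phi_k^* g_k(\t)}(q),
\end{equation*}
where $\chi_k$ is the indicator of $\Phi_k^{-1}(\L B_\t^{g_k}(p_k, r))$ on $\K$. The metrics $\Phi_k^* g_k(\t)$ converge in $C^\infty$ on $\K$, so the volume forms converge; and by Lemma~\ref{limdis}, since the minimizing $\L^{g_k}$-geodesics stay in $\Phi_k(\K')$ for a fixed compact $\K' \supset \K$, we get $\l_{(p_k,0)}^{g_k}(\Phi_k(q), \t) \to \l_{(p_\infty, 0)}^{g_\infty}(q, \t)$ pointwise on the interior of $\L B_\t^{g_\infty}(p_\infty, r)$.

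The main obstacle is handling the indicator functions $\chi_k$ and the boundary $\partial \L B_\t^{g_\infty}(p_\infty, r)$: these characteristic functions need not converge pointwise everywhere, and one must rule out mass escaping to, or concentrating on, the cut locus. I would deal with this using the dominated convergence theorem: the integrand $(4\pi\t)^{-n/2} \exp(-\l_{(p_k, 0)}^{g_k}(\Phi_k(q), \t)) \chi_k(q)$ is bounded, in the pulled-back measure, by a uniformly $L^1$ function because of the reduced-volume normalization $\tilV_{(p_k, 0)}^{g_k} \le 1$ from Theorem~\ref{reduced}, and one shows that the limsup and liminf of $\chi_k$ agree with the indicator of $\L B_\t^{g_\infty}(p_\infty, r)$ off a set of measure zero — precisely the $\L^{g_\infty}$-cut locus and the sphere $\{|V|_{g_\infty(0)} = r\}$, both of which are $\mu_{g_\infty(\t)}$-null (the cut locus by \cite[Lemma 7.99]{V2}). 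Combining these facts, the dominated convergence theorem gives
\begin{equation*}
\lim_{k \to \infty} \int_{\L B_\t^{g_k}(p_k, r)} u^{g_k}(\cdot, \t)\, d\mu_{g_k(\t)} = \int_{\L B_\t^{g_\infty}(p_\infty, r)} u^{g_\infty}(\cdot, \t)\, d\mu_{g_\infty(\t)},
\end{equation*}
and feeding this back into the tail estimate and sending $r \to \infty$ completes the proof.
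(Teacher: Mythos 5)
Your overall framework — tail control via Proposition~\ref{epsilon}, transplantation to a fixed compact set of $M_\infty$ via $\Phi_k$ together with Shi's estimate and Proposition~\ref{escape}, and then passing to the limit in the main term using Lemma~\ref{limdis} — matches the paper's. The divergence, and the gap, is in how you pass to the limit: you invoke the dominated convergence theorem and assert that the indicator functions $\chi_k$ of $\Phi_k^{-1}(\L B_\t^{g_k}(p_k, r))$ converge a.e.\ to the indicator of $\L B_\t^{g_\infty}(p_\infty, r)$, the exceptional set being the $\L^{g_\infty}$-cut locus and the sphere $\{|V|_{g_\infty(0)}=r\}$. That claim is not proven, and it is genuinely nontrivial: to show that $q$ in the interior of $\L B_\t^{g_\infty}(p_\infty, r)$ has $\Phi_k(q)\in \L B_\t^{g_k}(p_k, r)$ for all large $k$, you would need not merely $C^\infty$ convergence of the flows but a stability statement for the $\L$-exponential maps, their Jacobians, \emph{and} the $\L$-cut loci — i.e.\ that $\Omega^{g_k}_{(p_k,0)}(\t)$ converges to $\Omega^{g_\infty}_{(p_\infty,0)}(\t)$ in a suitable sense. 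Cut-locus stability under metric perturbation is delicate and nothing in Lemma~\ref{limdis} delivers it; it only controls $\l$, not the set $\Omega$. (Also, your proposed dominating function is off: $\tilV^{g_k}\le 1$ is a bound on total mass, not a pointwise majorant; the correct majorant is the constant $(4\pi\bt)^{-n/2}\exp(\frac{1}{3}n(n-1)C_0\bt)$, which does work but for a different reason.)

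The paper avoids the cut-locus issue altogether by running \emph{two} one-sided Fatou arguments with \emph{different} domains of integration, so the characteristic functions never have to converge. For the $\limsup$ direction it integrates the truncated $\hat{u}_k := u_k\,\mathbf{1}_{\L B_\t^{g_k}(p_k, r)}$ (domain varying with $k$), and the crucial observation is that
\[
\limsup_{k\to\infty}\hat{u}_k(\Phi_k(q), \bt)\in\{u_\infty(q, \bt), 0\},
\]
which follows from the \emph{equality} case of Lemma~\ref{limdis} applied along the subsequence where $\chi_k(q)=1$ (there the minimizing $\L^{g_k}$-geodesics stay in a fixed compact set). Since $\limsup\hat u_k\le u_\infty$ pointwise, the reverse Fatou inequality on the fixed compact set gives the upper bound. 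For the $\liminf$ direction it integrates $u_k\circ\Phi_k$ over the \emph{fixed} set $\L B_\t^{g_\infty}(p_\infty, r)$, uses $\liminf_k u_k\circ\Phi_k\ge u_\infty$ (from $\limsup_k\l^{g_k}\le\l^{g_\infty}$) and ordinary Fatou. No convergence of indicator functions is needed at any point. If you want to repair your proof, you should replace the single DCT step by this two-sided Fatou argument, or else supply a genuine proof of the a.e.\ convergence of $\chi_k$, which would require establishing the stability of $\Omega^{g_k}_{(p_k,0)}(\t)$ under Cheeger--Gromov convergence.
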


\begin{proof}
Let us put $u_{\star} (q, \t) := (4\pi\t)^{-n/2}
\exp(-\l^{g_\star}_{(p_{\star}, 0)}(q, \t))$ for $\star\in
\Z\cup\{\infty\}$, and fix $\bt \in(0, T)$ and $T_0 \in
(\bt, T)$. Set $V_\infty := M_\infty$.

We invoke Shi's gradient estimate (Theorem~\ref{Shi}):
\begin{equation*}
|\nabla R|^2(\cdot, \t) \le \frac{C(n)C_0^2}{\min\{C_0,
T_0-\t\}} \hbox{ on } B_{[0, T_0]}(V_\star, -\sqrt{C_0})
\hbox{ for }\t \in [0, T_0)
\end{equation*}
where
\begin{equation*}
B_{[0, T_0]}(V_\star, -\sqrt{C_0}) := \{ x\in V_\star ;
B_{g_\star(\t)}(x, \sqrt{C_0}) \subset V_\star \hbox{ for
all } \t \in [0, T_0]\}.
\end{equation*}
Fix $r>0$. Then by Proposition~\ref{escape}, we can find
$C(r)<\infty$ such that any\break $\L^{g_\star}$-geodesic
$\gamma_V([0, \bt])$ in $M_\star$ with $\gamma_V(0)=p_\star$ and
$|V|_{g_\star(0)}\le r$ can not escape from $B_{0}(p_\star,
C(r))$ when $\star$ is sufficiently large or~$=\infty$.

Define $\hat{u}_k(\cdot, \t) : M_k \to [0, \infty)$ by
\begin{equation*}
\hat{u}_k(q, \t) :=
\begin{cases}
u_k(q, \t) &\hbox{if } q \in \L B_\t(p_k, r)\\
0 &\hbox{otherwise}
\end{cases}
\end{equation*}
Then each $\hat{u}_k(\cdot, \bt)$ has a compact support in
$B_{g_k(0)}(p_k, C(r))$ and it follows from
Lemma~\ref{limdis} that
\begin{equation}\label{limsupu}
\limsup_{k\to\infty}\ \hat{u}_k(\Phi_k(q), \bt) \in \{ u_\infty(q, \bt), 0\}.
\end{equation}

Therefore, noting that $\hat{u}_k(\cdot, \bt)\kern-1pt \le\kern-1pt
(4\pi\bt)^{-n/2}\exp(\frac{1}{3}n(n-1)C_0\bt)$, we derive
from Proposition~\ref{epsilon}, Fatou's lemma and
(\ref{limsupu}) that
\begin{align*}
\limsup_{k \to\infty} \tilV^{g_k}_{(p_k, 0)}(\bt) -\epsilon(r)
&\le \limsup_{k\to\infty} \int_{ \L B_{\bt}(p_k, r) } u_k(\cdot, \bt)\,d\mu_{g_k(\bt)}\\
&= \limsup_{k\to\infty} \int_{B_{0}(p_\infty, C(r))} \hat{u}_k(\Phi_k(\cdot), \bt)\,d\mu_{\Phi_k^*g_k(\bt)}\\
&\le \int_{B_{0}(p_\infty, C(r))} \limsup_{k\to\infty}\ \hat{u}_k(\Phi_k(\cdot), \bt)\,d\mu_{\Phi_k^*g_k(\bt)}\\
&\le \tilV^{g_\infty}_{(p_\infty, 0)}(\bt).
\end{align*}

On the other hand, by combining Fatou's lemma and
(\ref{limsupdis}), we obtain
\begin{align*}
\liminf_{k \to \infty} \tilV^{g_k}_{(p_k, 0)}(\bt)
&\ge \liminf_{k \to \infty} \int_{\L B_{\bt}(p_\infty, r)} u_k(\Phi_k(\cdot), \bt)\,d\mu_{\Phi_k^{*}g_k(\bt)}\\
&\ge \int_{\L B_{\bt}(p_\infty, r)} \liminf_{k \to \infty} u_k(\Phi_k(\cdot), \bt)\,d\mu_{\Phi_k^{*}g_k(\bt)}\\
&\ge \int_{\, \L B_{\bt}(p_\infty, r)} u_\infty(\cdot, \bt)\,d\mu_{g_\infty(\bt)}\\
&\ge \tilV^{g_\infty}_{(p_\infty, 0)}(\bt) -\epsilon(r).
\end{align*}
We also used Proposition~\ref{epsilon} to get the last
inequality. Since $r>0$ and $\bt\in (0, T)$ are chosen
arbitrarily, we conclude that
\begin{equation*}
\lim_{k \to \infty} \tilV_{(p_k, 0)}^{g_k}(\t) =
\tilV_{(p_\infty, 0)}^{g_\infty}(\t)
\end{equation*}
for any $\t \in (0, T)$. This completes the proof of
Lemma~\ref{limvol}.
\end{proof}

\section{Proof of the main theorem}

Before proceeding to the proof of Theorem~\ref{main}, we
first establish the following technical lemma.
\begin{lem}\label{main2}
For any $\alpha>0$ and $\bt>0$ with $\alpha \bt^{-1} >2$,
we can find $\epsilon_n(\alpha\bt^{-1})\break >0$ depending
on $\alpha\bt^{-1}$ and $n \ge 2$ which satisfies the
following: let $(M^n,\break g(\t)), \t \in [0, T), T <
\infty$ be a complete backward Ricci flow with Ricci
curvature bounded bellow. Put
\begin{equation*}
M(\alpha) := \bigl\{ (p, s) \in M\times [0, T) ;\ |\Rm|(p,
s)>\alpha(T-s)^{-1}\bigr\}.
\end{equation*}
Suppose that the reduced volume based at $(p, s)$ satisfies
\begin{equation*}
\tilV_{(p, s)}(Q_{(p, s)}^{-1}\bt) >
1-\epsilon_n(\alpha\bt^{-1})\quad\hbox{at all } (p, s) \in
M(\alpha)
\end{equation*}
with $Q_{(p, s)} := |\Rm|(p, s)$. Here we define
$\tilV_{(p, s)}(\bt)$ as $\tilV_{(p, 0)}^{g_s}(\bt)$ for
$g_s(\t) := g(\t+s), \t\in[0, T-s)$. Then $M(\alpha) =
\emptyset$, that is,
\begin{equation*}
|\Rm|(\cdot, \t) \le \alpha(T-\t)^{-1} \hbox{ on } M\times
[0, T).
\end{equation*}
\end{lem}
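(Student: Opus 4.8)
The plan is to argue by contradiction via a point-picking blow-up. Suppose the conclusion fails for some sequence of backward Ricci flows $(M_k^n, g_k(\t)), \t\in[0, T_k)$ with Ricci curvature bounded below, all satisfying the reduced-volume hypothesis, yet with $M_k(\alpha)\ne\emptyset$. Using a standard point-picking argument (essentially Perelman's, cf.\ the argument behind \cite[Lemma 8.38]{V2}), I would select $(p_k, s_k)\in M_k(\alpha)$ where the scale-invariant curvature $|\Rm|(p, s)(T_k-s)$ is nearly maximal, and more precisely where $|\Rm|$ is comparable to its supremum on a suitable backward parabolic neighbourhood; this yields points $(p_k, s_k)$ with $Q_k := |\Rm|(p_k, s_k) \ge \alpha(T_k-s_k)^{-1}$ such that, after parabolic rescaling by $Q_k$ and time-translating $s_k$ to $0$, the rescaled flows $\tilde g_k(\t) := Q_k g_k(s_k + Q_k^{-1}\t)$ have $|\Rm|(\tilde p_k, 0) = 1$ and $|\Rm|\le C$ (for a universal $C$, say $4$) on a parabolic region $B_{\tilde g_k(0)}(\tilde p_k, A_k)\times[0, A_k]$ with $A_k\to\infty$. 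The key input making this possible is that we are inside $M_k(\alpha)$ with a definite margin $\alpha(T_k-s_k)^{-1}$, which converts the finite-time assumption $T_k<\infty$ into uniform curvature control after rescaling, exactly as in the construction of ancient $\kappa$-solutions.

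Next I would extract a limit. By Hamilton's compactness theorem \cite{Ha-Comp} — whose hypotheses (uniform curvature bounds on the parabolic regions, plus a non-collapsing or injectivity-radius lower bound) are met; for the injectivity radius one uses no-local-collapsing, which in turn follows from the reduced-volume hypothesis — a subsequence of $(M_k^n, \tilde g_k(\t), \tilde p_k)$ converges in the $C^\infty$ Cheeger--Gromov sense to a complete backward Ricci flow $(M_\infty^n, g_\infty(\t), p_\infty), \t\in[0,\infty)$, which is therefore ancient, has $|\Rm|(p_\infty, 0)=1$ (so is non-flat), and has Ricci curvature bounded below (indeed $|\Rm|$ bounded on compact time intervals). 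I would then apply Lemma~\ref{limvol}: since $Q_k^{-1}\bt\to 0$ is \emph{not} what appears — rather, under the parabolic rescaling of Proposition~\ref{rescaling} the hypothesis $\tilV_{(p_k, s_k)}(Q_k^{-1}\bt)>1-\epsilon_n(\alpha\bt^{-1})$ becomes $\tilV^{\tilde g_k}_{(\tilde p_k, 0)}(\bt)>1-\epsilon_n(\alpha\bt^{-1})$ — Lemma~\ref{limvol} gives $\tilV^{g_\infty}_{(p_\infty, 0)}(\bt)\ge 1-\epsilon_n(\alpha\bt^{-1})$.

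Now I would choose $\epsilon_n(\alpha\bt^{-1})$ at the end of the argument, \emph{after} seeing what Theorem~\ref{reduced} forces. Since $\tilV^{g_\infty}_{(p_\infty, 0)}$ is non-increasing in $\tau$ and $\le 1$, and $\tilV^{g_\infty}_{(p_\infty, 0)}(\bt)\ge 1-\epsilon_n(\alpha\bt^{-1})$, the reduced volume is pinched in $[1-\epsilon_n, 1]$ on all of $(0, \bt]$. The heart of the matter is to run a rigidity/almost-rigidity argument: if $\epsilon_n$ were $0$ we would get $\tilV^{g_\infty}_{(p_\infty, 0)}(\bt)=1$, hence by Theorem~\ref{reduced} the flow is the Gaussian soliton, contradicting $|\Rm|(p_\infty, 0)=1$. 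For the almost-equality case one must propagate this: the standard device is to iterate — for each fixed small $\bt$, letting $\epsilon_n(\alpha\bt^{-1})\to 0$ would force a limiting flow that is Gaussian, so a further contradiction/compactness argument (a diagonal one over a sequence $\epsilon_n^{(j)}\to0$) shows that \emph{some} $\epsilon_n(\alpha\bt^{-1})>0$ already suffices to preclude the existence of such a non-flat limit. The main obstacle is precisely making this last passage from the strict rigidity statement of Theorem~\ref{reduced} to a quantitative gap: one needs the curvature normalization $|\Rm|(p_\infty,0)=1$ to survive a second round of Cheeger--Gromov convergence, which requires carrying the uniform bound $|\Rm|\le C$ on large parabolic balls through the whole construction, so that Hamilton compactness applies again to the sequence of candidate near-Gaussian limits. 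Once that is in hand, the contradiction closes and $M(\alpha)=\emptyset$ follows.
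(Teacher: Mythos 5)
Your plan — point-picking blow-up, Hamilton compactness with an injectivity-radius bound coming from the reduced-volume hypothesis, and then rigidity from Theorem~\ref{reduced} applied to the limit — is the same route as the paper. But the way you organize the contradiction introduces a real difficulty that the paper avoids. The paper does not fix an $\epsilon_n$ and then wrestle with an almost-rigidity statement; it negates the conclusion directly: if no $\epsilon_n(\alpha\bt^{-1})>0$ works, then for each $k$ there is a flow $g_k$ satisfying the reduced-volume hypothesis with $\epsilon = 1/k$ and with $M_k(\alpha)\neq\emptyset$. After Perelman's point-picking (Lemma~\ref{pointpick} with $(A,B)=(k,\alpha)$) and parabolic rescaling (Proposition~\ref{rescaling}), the rescaled flows $\tilde g_k$ satisfy $\tilV^{\tilde g_k}_{(p_k,0)}(\bt) > 1 - 1/k$, and Lemma~\ref{limvol} then gives $\tilV^{g_\infty}_{(p_\infty,0)}(\bt) = 1$ \emph{exactly} in the limit. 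Theorem~\ref{reduced} then says the limit is the Gaussian soliton, contradicting $|\Rm|(p_\infty,0)=1$. There is no ``almost-rigidity'' or ``second round of Cheeger--Gromov convergence'' needed: the quantity $\epsilon_k=1/k$ is built into the sequence from the start, so the rigidity statement is applied on the nose. Your paragraph identifying this passage as the ``main obstacle'' and invoking a diagonal argument is therefore a detour; if you rewrite the contradiction setup as above, that obstacle disappears.

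Two smaller points. First, the parabolic region coming from the point-picking lemma has spatial radius $k Q_k^{-1/2}\to\infty$ after rescaling, but the time interval is the fixed $[0,\alpha/2]$, not $[0,A_k]$ with $A_k\to\infty$; Hamilton compactness only requires the former, so this is harmless but should be stated correctly. Second, the paper gets the injectivity-radius lower bound not by citing a no-local-collapsing theorem as a black box, but directly: Proposition~\ref{escape} confines $\L B_\delta(p_k,r)$ to a unit ball, so the reduced-volume lower bound $1-k^{-1}$ together with Proposition~\ref{epsilon} gives a uniform lower bound for $\vol_{\tilde g_k(0)} B_{\tilde g_k(0)}(p_k,1)$, and Cheeger's lemma then yields the injectivity-radius bound. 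Your appeal to ``no-local-collapsing'' is morally the same but less self-contained; for the statement as proved here (Ricci curvature bounded below, no global curvature bound assumed before rescaling), the direct route is cleaner.
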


One might notice the similarity of the statement of
Lemma~\ref{main2} to those of Perelman's pseudolocality
theorem~\cite[Theorem 10.1]{P} and Ni's
$\epsilon$-regularity theorem~\cite[Theorem 4.4]{Ni-Mean}.
In fact, the proof of Lemma~\ref{main2} follows the same
line as those of them. (As the referee report says, there
is a close relation between gap and local
regularity~theorems.)

\begin{proof}[Proof of Lemma~$\ref{main2}$]
We prove by contradiction. Fix $\alpha >0$ and $\bt>0$ with
$\alpha \bt > 2$. Assume that we have a sequence $\{(M_k^n,
g_k(\t))\}_{k\in \Z}, \t \in [0, T_k)$ of complete backward
Ricci flows with Ricci curvature bounded below such that
\begin{itemize}
\item $M_k(\alpha) := \{ (p, \t) \in M_k\times [0, T_k) ;\  |\Rm|(p, \t)(T_k-\t) > \alpha \} \ne \emptyset$ and
\item $\tilV^{g_k}_{(p, \t)}(Q_{(p, \t)}^{-1}\bt) > 1-k^{-1}$ for any $(p, \t) \in M_k(\alpha)$, where $Q_{(p, \t)}:=\break |\Rm|(p, \t)$.
\end{itemize}

Applying Perelman's point picking lemma (Lemma
\ref{pointpick}) for $(A, B) = (k, \alpha)$, we can find a
point $(p_k, \t_k) \in M_k(\alpha)$ such that
$\tilV^{g_k}_{(p_k, \t_k)}(Q_k^{-1}\bt) > 1- k^{-1}$ and
\begin{equation*}
|\Rm|(x, \t) \le 2Q_k
\end{equation*}
for $(x, \t) \kern-1pt\in\kern-1pt B_{g_k(\t_k)}(p_k, kQ_k^{-1/2})\kern-1pt \times\kern-1pt
[\t_k, \t_k + \frac{1}{2} Q_k^{-1}\alpha]$, where $Q_k :=
|\Rm|(p_k, \t_k)$.

Consider the sequence $\{(M_k^n, \tilde{g}_k(\t),
p_k)\}_{k\in\Z}$ of rescaled Ricci flows
\begin{equation*}
\tilde{g}_k(\t) := Q_kg_k(Q_k^{-1}\t + \t_k),\  \t\in [0,
\alpha/2].
\end{equation*}
Then every $\tilde{g}_k(\t)$ has $|\Rm|(p_k, 0)=1$,
$|\Rm|\le 2$ on $B_{\tilde{g}_k(0)}(p_k, k)\times [0,
\alpha/2]$, and $\tilV^{\tilde{g}_k}_{(p_k, 0)}(\bt) >
1-k^{-1}$ by Proposition~\ref{rescaling}.

Now we observe that the injectivity radius of $(M_k,
\tilde{g_k}(0))$ at $p_k$ is uniformly bounded from below.
To see this, we use Proposition~\ref{escape} to get small
$\delta = \delta(r)>0$ so that $\L\,\exp_\delta(p_k, r)
\subset B_{\tilde{g}_k(0)}(p_k, 1)$ for some large $r>0$
and all large $k$. Then
\begin{align*}
1- k^{-1} < \tilV^{\tilde{g}_k}_{(p_k, 0)}(\bt) &\le \tilV^{\tilde{g}_k}_{(p_k, 0)}(\delta)\\
&\le
(4\pi\delta)^{-n/2}e^{n(n-1)\delta}\vol_{\tilde{g}_k(0)}B_{\tilde{g}_k(0)}(p_k,
1) + \epsilon(r)
\end{align*}
from which we obtain a uniform lower bound for
$\vol_{\tilde{g}_k(0)}B_{\tilde{g}_k(0)}(p_k, 1)$. The
desired lower bound for the injectivity radius follows from
Cheeger's~lemma.

Since each $(M_k^n, \tilde{g}_k(\t))$ has a uniform
curvature bound and lower bound for the injectivity radius
at $(p_k, 0)$, according to Hamilton's compactness
theorem~\cite{Ha-Comp}, we can take a subsequence of
$\{(M_k^n, \tilde{g}_k(\t), p_k)\}_{k\in\Z}$ converging to
the limit Ricci flow $(M_\infty^n, g_\infty(\t), p_\infty),
\t \in [0, \alpha/2)$. From Lemma~\ref{limvol}, we infer
that $\tilV^{g_\infty}_{(p_\infty, 0)}(\bt) =1$, which
implies that the limit $(M_\infty^n, g_\infty(0))$ is
isometric to the Euclidean space by Theorem~\ref{reduced}.
This is in conflict with that $|\Rm|(p_\infty, 0)=1$. The
proof of Lemma~\ref{main2} is now complete.
\end{proof}

Now we present the proof of Theorem~\ref{main}.
\begin{proof}[Proof of Theorem~$\ref{main}$]
Take $\epsilon_n := \epsilon_n(3) >0$ from
Lemma~\ref{main2}. Suppose that $(M^n, g(\t)), \t\in [0,
\infty)$ is a complete ancient solution to the Ricci flow
with Ricci curvature bounded from below satisfying that
\begin{equation*}
\tilNu(g) > 1-\epsilon_n.
\end{equation*}
Due to Lemma~\ref{basept} and the monotonicity of the
reduced volume, we know that
\begin{equation*}
\tilV_{(p, \t)}(\bt) > 1-\epsilon_n \hbox{ for all } (p,
\t) \in M\times[0, \infty) \hbox{ and } \bt>0.
\end{equation*}
By Lemma~\ref{funda}, we know that $\pi_1(M)$ is finite,
 and applying Lemma~\ref{main2} for all $T>0$ yields that $(M^n, g(\t)), \t\in[0, \infty)$ is flat.
The only flat manifold with finite fundamental group is the
Euclidean space. Thus $(M^n, g(\t))$ is isometric to
$(\R^n, g_{\rm E})$ for all $\t\in[0, \infty)$, i.e.,
$(M^n, g(\t)), \t\in[0, \infty)$ is the Gaussian soliton.
This concludes the proof of Theorem~\ref{main}.
\end{proof}

\begin{rem}
Theorem~\ref{main} may have several variations. (See the
questions in \cite{Ni-Ent} for instance.) The following,
which also generalizes Theorem \ref{gap}, may be thought of
as one of them.
\end{rem}

\begin{thm}\label{main4}
There exists $\epsilon_n^{\prime}>0$ satisfying the
following: let $(M^n, g(\t)),$ $\t \in [0, \infty)$ be a
complete ancient solution to the Ricci flow with bounded
non-negative Ricci curvature. Suppose that the asymptotic
volume ratio $\nu(g(\t_0))$ of $g(\t_0)$ is greater than
$1-\epsilon_n^{\prime}$ for some $\t_0 \in[0, \infty)$.
Then $(M^n, g(\t)), \t\in [0, \infty)$ is the
Gaussian~soliton.
\end{thm}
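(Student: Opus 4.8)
The plan is to reduce Theorem~\ref{main4} to Theorem~\ref{main}. Fix the time $\tau_0$ appearing in the hypothesis and pass to the shifted flow $\tilde g(\tau):=g(\tau+\tau_0)$, which is again a complete ancient backward Ricci flow with bounded non-negative Ricci curvature and satisfies $\tilde g(0)=g(\tau_0)$. Everything hinges on the comparison
\begin{equation*}
\tilNu(\tilde g)\ \ge\ \nu(g(\tau_0)).
\end{equation*}
Granting it, set $\epsilon_n':=\epsilon_n$: if $\nu(g(\tau_0))>1-\epsilon_n'$ then $\tilNu(\tilde g)>1-\epsilon_n$, so by Theorem~\ref{main} the flow $\tilde g$ is the Gaussian soliton. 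Hence $M\cong\R^n$ and $g(\tau)$ is isometric to $(\R^n,g_{\rm E})$ for every $\tau\ge\tau_0$; in particular $g(\tau_0)$ is flat, and since letting $\tau$ decrease from $\tau_0$ to $0$ is evolution forward in real time, forward uniqueness of the Ricci flow forces $g(\tau)$ to be flat for all $\tau\in[0,\infty)$, that is, $g$ is the Gaussian soliton.

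To prove the comparison I would exploit that the slice $(M,\tilde g(0))$, having $\Ric\ge0$, is itself a stationary super Ricci flow, for which Lemma~\ref{Ricciflat} identifies the asymptotic reduced volume with $\nu(g(\tau_0))$. One then compares the monotone integrand $(4\pi\bt)^{-n/2}{\rm e}^{-\l(\gamma_V(\bt),\bt)}\L J_V(\bt)$ of Perelman's formula (\ref{eq:reducedpullback}) for the flow $\tilde g$ with the analogous static integrand for $\tilde g(0)$: both tend to $\pi^{-n/2}{\rm e}^{-|V|_{\tilde g(0)}^2}$ as $\bt\to0^+$, and one wants the $\tilde g$-integrand to decrease no faster in $\bt$ than the static one, so that integrating over $V\in T_pM$ and letting $\bt\to\infty$ yields $\tilNu(\tilde g)\ge\nu(g(\tau_0))$. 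The inputs for this pointwise estimate are the non-negativity of the scalar curvature $H=R$ and, for ancient solutions, the non-negativity of Hamilton's trace Harnack quantity $\H(X)$ (hence of $K$), used together with the differential inequalities (\ref{Hess})--(\ref{Laplace}) recalled in Section~2.

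The principal obstacle is exactly this pointwise comparison. The difficulty is that $\Ric\ge0$ pushes the reduced volume in two opposite directions at once: it makes the volume element $d\mu_{\tilde g(\bt)}$ grow in $\bt$, which would raise $\tilV^{\tilde g}$, but it also forces $\l^{\tilde g}(q,\bt)\ge d_{\tilde g(0)}(p,q)^2/4\bt$, which would lower it; a crude comparison of the reduced volumes themselves is therefore inconclusive, and one must track the competition between the growth of the $\L$-Jacobian $\L J_V$ and the decay of the ${\rm e}^{-\l}$ factor along each $\L$-geodesic. A lesser point is the concluding step: propagating flatness across $[0,\tau_0)$ via forward uniqueness really uses a bound on the full curvature tensor on $M\times[0,\tau_0]$ rather than merely the assumed bound on $\Ric$, which must either be noted or worked around.
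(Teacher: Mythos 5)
Your reduction strategy---establishing $\tilNu(\tilde g)\ge\nu(g(\tau_0))$ and invoking Theorem~\ref{main}---differs from the route the paper indicates, and the comparison inequality that carries it is exactly what you admit you cannot prove. As you note yourself, the two effects (growth of $\L J_V$ / of $d\mu_{g(\bt)}$ versus decay of ${\rm e}^{-\l}$) pull in opposite directions, and the differential inequalities in (\ref{Hess})--(\ref{Laplace}) only give one-sided control; there is no pointwise domination of the static integrand by the flow integrand along $\L$-geodesics, and the paper never claims such an inequality. Since the whole argument rests on it, this is a genuine gap, not a mere loose end.

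The paper takes a different path, signalled by the Proposition quoted just after the statement of Theorem~\ref{main4}: for a complete backward Ricci flow with bounded non-negative Ricci curvature, the asymptotic volume ratio $\nu(g(\t))$ is \emph{constant} in $\t$ (this uses Cheeger--Colding volume convergence and Lemma~\ref{Pelem}(b)). With that in hand, one repeats the scheme of Lemma~\ref{main2} with $\nu$ in place of $\tilV$: if $M(\alpha)\neq\emptyset$, Perelman's point-picking (Lemma~\ref{pointpick}) produces a blow-up sequence with $|\Rm|(p_k,0)=1$; $\nu$ is scale-invariant and constant in $\t$, so it stays $>1-\epsilon_n'$ along the sequence; under smooth Cheeger--Gromov convergence Bishop--Gromov monotonicity gives $\nu(g_\infty(0))\ge\limsup\nu(g_k(0))>1-\epsilon_n'$; and Anderson's gap theorem (Theorem~\ref{gap}, via Lemma~\ref{Ricciflat}) forces $g_\infty(0)$ to be Euclidean, contradicting $|\Rm|(p_\infty,0)=1$. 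Because Lemma~\ref{main2}-style conclusions hold for every $T$, one obtains $|\Rm|\equiv0$ on all of $M\times[0,\infty)$ directly; this also disposes of your secondary worry about propagating flatness back across $[0,\tau_0)$ by forward uniqueness, which is simply never needed. To salvage your version you would either have to prove the comparison $\tilNu(\tilde g)\ge\nu(g(\tau_0))$ (nontrivial and not in the paper) or switch to the paper's constancy-of-$\nu$ plus blow-up argument.
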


The following proposition was proved by the author by
utilizing Cheeger--Colding's volume convergence theorem
\cite[Theorem 5.9]{ChCo} and Lemma~\ref{Pelem}(b).
\begin{prop}[{\rm \cite[Theorem 7]{Yo}}]
Let $(M, g(\t))$ be a complete backward Ricci flow with
bounded non-negative Ricci curvature. Then the asymptotic
volume ratio $\nu(g(\t))$ of $g(\t)$ is constant in~$\t$.
\end{prop}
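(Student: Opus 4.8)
The plan is, for arbitrary $0\le\t_1<\t_2<\infty$, to show that the blow-downs at infinity of $(M,g(\t_1))$ and $(M,g(\t_2))$ agree, and then to extract $\nu(g(\t_1))=\nu(g(\t_2))$ from Cheeger--Colding's volume convergence theorem together with the Bishop--Gromov inequality. Fix $p\in M$. The first step is two distance comparisons on $M\times M$. Since $\dertau{}g=2\Ric\ge0$, the $g(\t)$-length of every curve is non-decreasing in $\t$, so $d_{g(\t_1)}\le d_{g(\t_2)}$. For the reverse, boundedness of the curvature gives $\Ric\le K_0\,g(\t)$ for some $K_0<\infty$ and all $\t$, and I would invoke Perelman's distance distortion estimate (Lemma~\ref{Pelem}(b), with the auxiliary radius chosen optimally) to get $\dertau{}d_{g(\t)}(x,y)\le C(n,K_0)$ in the barrier sense for all $x,y$; integrating over $[\t_1,\t_2]$ yields
\begin{equation*}
d_{g(\t_1)}(x,y)\le d_{g(\t_2)}(x,y)\le d_{g(\t_1)}(x,y)+c,\qquad c:=C(n,K_0)(\t_2-\t_1),
\end{equation*}
with $c$ a \emph{fixed} constant, independent of $x,y$ (on the bounded set of pairs where the barrier estimate is unavailable, the crude bound $d_{g(\t_2)}\le{\rm e}^{K_0(\t_2-\t_1)}d_{g(\t_1)}$ is itself $O(1)$ and absorbs into $c$).

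The point is that this error is additive and scale-independent. Passing to the rescaled pointed spaces $(M,r^{-2}g(\t_k),p)$, whose distance functions are $r^{-1}d_{g(\t_k)}$, the comparison reads $r^{-1}d_{g(\t_1)}\le r^{-1}d_{g(\t_2)}\le r^{-1}d_{g(\t_1)}+r^{-1}c$. Because $\Ric\ge0$, Gromov's precompactness theorem lets me choose $r_i\to\infty$ along which $(M,r_i^{-2}g(\t_1),p)$ converges in the pointed Gromov--Hausdorff sense to some $(Y,o)$; since $r_i^{-1}c\to0$, the space $(M,r_i^{-2}g(\t_2),p)$ is pointed-Gromov--Hausdorff closer and closer to $(M,r_i^{-2}g(\t_1),p)$ and hence converges to the \emph{same} $(Y,o)$. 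Applying \cite[Theorem~5.9]{ChCo} to each of the two convergent sequences gives, for every $R>0$,
\begin{equation*}
\vol_{r_i^{-2}g(\t_k)}B_{r_i^{-2}g(\t_k)}(p,R)\ \longrightarrow\ \mathcal{H}^n\(B_Y(o,R)\)\qquad(k=1,2).
\end{equation*}
But the left-hand side equals $\omega_nR^n\cdot\vol_{g(\t_k)}B_{g(\t_k)}(p,r_iR)\big/\omega_n(r_iR)^n$, which by the Bishop--Gromov inequality tends, as $r_iR\to\infty$, to $\omega_nR^n\,\nu(g(\t_k))$. Equating the two limits yields $\omega_nR^n\nu(g(\t_1))=\mathcal{H}^n(B_Y(o,R))=\omega_nR^n\nu(g(\t_2))$, so $\nu(g(\t_1))=\nu(g(\t_2))$; since $\t_1,\t_2$ are arbitrary, $\nu(g(\t))$ is constant.

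I expect the main obstacle to be exactly the additive distance comparison $d_{g(\t_2)}\le d_{g(\t_1)}+c$ with $c$ independent of the scale $r$: a naive comparison from $\Ric\le K_0g$ gives only $d_{g(\t_2)}\le{\rm e}^{K_0(\t_2-\t_1)}d_{g(\t_1)}$, an error that survives rescaling by $r_i^{-1}$ and would let me conclude only that the two blow-downs are bi-Lipschitz with a fixed, non-trivial constant rather than isometric. Perelman's estimate is precisely what promotes this to a uniform additive bound, and it is the only place the upper curvature bound enters. The remaining points are routine: the rescaled metrics retain $\Ric\ge0$, so the hypotheses of \cite[Theorem~5.9]{ChCo} and of Gromov precompactness are met, and the identification of the limiting ball-volumes with the asymptotic volume ratios is just the monotonicity in Bishop--Gromov.
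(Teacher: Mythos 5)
Your argument is correct and follows essentially the same route the paper indicates: the additive distance distortion from Perelman's Lemma~\ref{Pelem}(b) (with the crude exponential bound absorbing the short-distance pairs), disappearing under rescaling so that the blow-downs at the two times coincide, and then \cite[Theorem~5.9]{ChCo} together with Bishop--Gromov to identify the asymptotic volume ratios with the $\mathcal{H}^n$-measure of balls in the common tangent cone at infinity. These are precisely the two ingredients the paper cites for the proof in~\cite{Yo}, so there is nothing to add beyond noting that you have filled in the details correctly.
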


The proof of Theorem~\ref{main4} is essentially the same as
that of Theorem~\ref{main} and we leave it to the
interested~reader.

We also comment here that Theorem~\ref{main4} is not true
when the ancient solution $g(\t)$ in the statement is
replaced with an {\it immortal solution} $g(t), t\in[0,
\infty)$ to the (forward) Ricci flow. In fact, one can show
that any Ricci flow $g(t), t\in [0, T)$ which has bounded
non-negative curvature operator and the initial metric
$g(0)=g_0$ with positive $\nu(g_0)>0$ extends to the
immortal solution $g(t), t\in[0, \infty)$. (See also the
example in \cite[Chapter 4, Section~5]{CLN}.)

\section{A gap theorem for gradient shrinkers}

In this section, we present the proof of
Corollary~\ref{main3} and discuss the case of expanding
Ricci solitons.

\subsection{Shrinking Ricci solitons}

We now prove Corollary~\ref{main3}. Recall that $(M^n, g,
f)$ is a complete gradient shrinking Ricci soliton with
Ricci curvature bounded below.
\begin{proof}[Proof of Corollary~$\ref{main3}$]
First, we construct an ancient solution to the Ricci flow.
(Recall the proof of Theorem~\ref{reduced}. See also
\cite[Theorem 4.1]{CLN}.) Define a one-parameter family of
diffeomorphisms $\phi_\t :M\to M, \t \in(0, \infty)$ by
\begin{equation*}
\frac{d}{d\t}\phi_\t = \frac{\lambda}{\t}\nabla f \circ
\phi_\t\quad \hbox{and}\quad \phi_\lambda= {\rm id}_M.
\end{equation*}
It is easy to see that the gradient vector field $\nabla f$
is complete, thanks to the assumption on the lower bound
for $\Ric$. Then we pull back $g$ by $\psi_\t :=
\phi_\t^{-1}$ so as to obtain a backward Ricci flow
$g_0(\t) := \frac{\t}{\lambda}(\psi_\t)^{*}g, \t \in(0,
\infty)$ with $g_0(\lambda)=g$. Put $g_{1}(\t) :=g(\t+1),
\t\in[0, \infty)$ and fix some point $p\in M$. It suffices
to show that
\begin{equation}\label{goal}
\tilNu(g_{1}) \ge \int_M (4\pi\lambda)^{-n/2}{\rm e}^{-f}\,d\mu_g
\end{equation}
since the left (resp. right)-hand side of (\ref{goal}) is
$\le 1$ (resp. $>$0).

Let us first give a heuristic argument. It seems reasonable
to hold that
\begin{equation*}
\tilV^{g_0}_{(p, 0)}(\t) = \tilNu(g_0) = \int_M
(4\pi\lambda)^{-n/2}{\rm e}^{-f}\,d\mu_g\quad \hbox{ for all }
\t>0
\end{equation*}
(cf. \cite{CHI}). Then inequality (\ref{goal}) will
follow from Lemma~\ref{basept}, if it is applicable to this
case. Of course, the problem arises from the fact that
$\t=0$ is the singular time for~$g_0(\t)$.

Now we give a rigorous proof. Recall that we have
normalized $f$ in~(\ref{fnormalize}) so that
\begin{equation*}
R_{g_0(\t)}+ |\nabla f_\t|_{g_0(\t)}^2
-\frac{f_\t}{\t}=0\quad \hbox{ for } \t>0
\end{equation*}
where $f_\t=f(\cdot, \t) := (\psi_\t)^* f = f\circ
\psi_\t$. Since $R_{g_0(\t)}$ is non-negative
(Proposition~\ref{ancient}), so is $f_\t$. Put $x_1
:=\phi_{\t_1}(x)$ and $x_2:=\phi_{\t_2}(x)$ for some $x\in
M$. Then it follows from the argument in \cite[p.~344]{V2}
that $\gamma(\t) :=\phi_{\t}\circ\phi^{-1}_{\t_1}(x_1)$ is
the $\L^{g_0}$-minimal geodesic from $(x_1, \t_1)$ to
$(x_2, \t_2)$ and
\begin{equation}\label{shrinkdist}
\frac{1}{2\sqrt{\t_2}} L^{g_0}_{(x_1, \t_1)}(x_2, \t_2) = f(x_2, \t_1) - \sqrt\frac{\t_1}{\t_2}f(x_1, \t_1).
\end{equation}

Fix a compact set $\K \subset M$, $\epsilon>0$ and
$\bt\gg1$. Take $q \in \phi_{\bt}(\K)$ and $p_2 \in
\phi_{2}(\K)$ with $q=\phi_{\bt}\circ \phi_{2}^{-1}(p_2)$.
From the triangle inequality for $\L$-distance,
Sublemma~\ref{timeshift} and (\ref{shrinkdist}), it follows
that
\begin{align*}
\l_{(p, 0)}^{g_1}(q, \bt-1)
&\le \frac{1}{2\sqrt{\bt-1}} \( L_{(p_2, 1)}^{g_1}(q, \bt-1) + L_{(p, 0)}^{g_1}(p_2, 1)\)\\
&\le \frac{1}{2\sqrt{\bt}} L_{(p_2, 2)}^{g_0}(q, \bt) + \frac{1}{2\sqrt{\bt-1}} \max_{\phi_{2}(\K)} L_{(p, 0)}^{g_1}(\cdot, 1)\\
&\le f(q, \bt) - \sqrt{\frac{2}{\bt}}f(p_2, 2) + C(\K)\bt^{-1/2}\\
&\le f(q, \bt) + C(\K)\bt^{-1/2}.
\end{align*}
From this, we deduce that
\begin{align*}
\tilNu(g_1)
&\ge \tilV_{(p, 0)}^{g_1}(\bt-1) -\epsilon\\
&\ge {\rm e}^{-C(\K)\bt^{-1/2}} \int_{\phi_\bt(\K)} (4\pi\bt)^{-n/2}{\rm e}^{-f(q, \bt)} d\mu_{g_0(\bt)}(q) -\epsilon\\
&= {\rm e}^{-C(\K)\bt^{-1/2}} \int_\K
(4\pi\lambda)^{-n/2}{\rm e}^{-f} d\mu_{g} -\epsilon.
\end{align*}
We have used the equation
\begin{equation*}
\int_M h\circ \psi_\bt\,d\mu_{(\psi_\bt)^*g} = \int_M
h\,d\mu_g\quad\hbox{for any } h\in L^1(d\mu_g)
\end{equation*}
which follows from the definition of pull back. Inequality
(\ref{goal}) then follows from the arbitrariness of
$\bt>0,\epsilon>0$ and $\K\subset M$.

By using~(\ref{goal}), Theorems~\ref{main} and
\ref{finitefundaRicci} immediately imply
Corollary~\ref{main3}. As for (3) of Corollary~\ref{main3},
it is easy to see that the Euclidean space regarded as a
shrinking soliton is the Gaussian soliton up to scaling
(cf. \cite[p.~416]{V2}). This concludes the proof of
Corollary~\ref{main3}.
\end{proof}

In the above proof, inequality (\ref{goal}) was enough for
our purpose, however, we can actually show that the
equality holds in (\ref{goal}) in the situation of
Corollary~\ref{main3}. Here we describe the proof of this
for future applications.
\begin{prop}\label{fvolredvol}
Let $(M^n, g, f)$ be a complete gradient shrinking Ricci soliton
with Ricci curvature bounded below by $-K \in\R$. Assume that
$f$ is normalized so that $(\ref{fnormalize})$ holds. Then, with
notation as in the proof of\break Corollary~\ref{main3}, we have
\begin{equation}
\tilNu(g_1) = \int_M (4\pi\lambda)^{-n/2}{\rm
e}^{-f}\,d\mu_g.
\end{equation}
\end{prop}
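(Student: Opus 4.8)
The plan is to upgrade the one-sided estimate $\tilNu(g_1) \ge \int_M (4\pi\lambda)^{-n/2}{\rm e}^{-f}\,d\mu_g$ obtained in the proof of Corollary~\ref{main3} to an equality by establishing the reverse inequality $\tilNu(g_1) \le \int_M (4\pi\lambda)^{-n/2}{\rm e}^{-f}\,d\mu_g$. The natural strategy mirrors the proof of Lemma~\ref{basept}, but run in the opposite direction along the time axis: there we compared reduced volumes at two base times with the later one dominating; here we want to see that, because $g_0(\t)$ is essentially a self-similarly evolving (soliton) flow, its reduced volume is exactly constant in $\t$ and equals the normalized $f$-volume, so nothing is gained in the limit $\t\to\infty$. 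Concretely, I would first establish the pointwise identity
\begin{equation*}
\tilV^{g_0}_{(p,0)}(\t) = \int_M (4\pi\lambda)^{-n/2}{\rm e}^{-f}\,d\mu_g \quad\text{for all } \t>0,
\end{equation*}
using formula~(\ref{shrinkdist}) for the $\L^{g_0}$-distance together with the explicit description of $\L^{g_0}$-geodesics as $\gamma(\t)=\phi_\t\circ\phi_{\t_1}^{-1}(x_1)$. The point is that from~(\ref{shrinkdist}) (with $\t_1\to 0+$, where $\sqrt{\t_1/\t_2}\,f(x_1,\t_1)\to 0$ because $f_\t$ is controlled) one reads off $\l^{g_0}_{(p,0)}(q,\t)=f(q,\t)$, and then the change of variables $q=\psi_\t(x)$ turns $\int_M (4\pi\t)^{-n/2}{\rm e}^{-f(q,\t)}\,d\mu_{g_0(\t)}(q)$ into $\int_M (4\pi\lambda)^{-n/2}{\rm e}^{-f}\,d\mu_g$, exactly as in the last displayed computation of the proof of Corollary~\ref{main3}.

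Next I would transfer this to $g_1(\t)=g(\t+1)$. Since $g_1$ is a time-shift of $g_0$ (namely $g_1(\t)=g_0(\t+1)$ up to the diffeomorphism built into the definitions), Lemma~\ref{basept}, applied with base times $\t_1=0$ for an approximating flow $g_0$ started near $\t=\delta$ and $\t_2$ the shift to $g_1$, would give $\tilNu(g_1)\ge \tilNu(g_0(\cdot+\delta))$; but the constancy established above makes all these limits equal to the $f$-volume. The cleaner route for the reverse inequality, avoiding the singular time, is: for small $\delta>0$ set $\tilde g_\delta(\t):=g_0(\t+\delta)$, a genuine backward Ricci flow on $[0,\infty)$ satisfying Assumption~\ref{as}; by the constancy of $\tilV^{g_0}$ we get $\tilNu(\tilde g_\delta)=\int_M (4\pi\lambda)^{-n/2}{\rm e}^{-f}\,d\mu_g$; and since $g_1$ and $\tilde g_\delta$ differ only by the finite time-shift $1-\delta$ (and a diffeomorphism, under which $\tilNu$ is invariant), Lemma~\ref{basept} gives $\tilNu(g_1)\le \tilNu(\tilde g_\delta)$ when $1-\delta\ge 0$, i.e. for $\delta\le 1$. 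Combined with inequality~(\ref{goal}), this pins down $\tilNu(g_1)=\int_M (4\pi\lambda)^{-n/2}{\rm e}^{-f}\,d\mu_g$.

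The main obstacle I anticipate is making the limit $\t_1\to 0+$ in~(\ref{shrinkdist}) rigorous, i.e. justifying $\l^{g_0}_{(p,0)}(q,\t)=f(q,\t)$ and the attendant integrability and convergence of the reduced-volume integrals, given that $\t=0$ is the singular time for $g_0(\t)$ and that $f$ is only bounded below, not above. One has to control $f_\t$ and $|\nabla f_\t|$ on fixed compact sets uniformly as $\t\to 0+$ (using the soliton identity~(\ref{fnormalize}), non-negativity of $R_{g_0(\t)}$ from Proposition~\ref{ancient}, and the lower Ricci bound $-K$), check that the $\L^{g_0}$-geodesics in~(\ref{shrinkdist}) are genuinely minimizing up to the base point $(p,0)$ (so that $\Omega_{(p,0)}(\t)$ is essentially all of $T_pM$ and the $\L$-exponential pullback~(\ref{eq:reducedpullback}) is available), and verify the needed uniform integrability so that the constancy identity survives passing to the limit — this is where Proposition~\ref{escape} and the exponential tail bound $\epsilon(r)\le {\rm e}^{-r^2/2}$ from Proposition~\ref{epsilon} do the work. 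Once that technical core is in place, the remaining steps are bookkeeping with Lemma~\ref{basept}, Proposition~\ref{rescaling}, and the change-of-variables identity $\int_M h\circ\psi_\t\,d\mu_{(\psi_\t)^*g}=\int_M h\,d\mu_g$ already used above.
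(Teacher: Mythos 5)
Your overall strategy --- prove the pointwise constancy $\tilV^{g_0}_{(p,0)}(\t)=\int_M(4\pi\lambda)^{-n/2}{\rm e}^{-f}d\mu_g$ for the soliton flow $g_0$ and then transfer it to $g_1$ --- is exactly the heuristic the paper describes before its rigorous proof and then explicitly abandons: ``Of course, the problem arises from the fact that $\t=0$ is the singular time for $g_0(\t)$.'' You flag this as the ``main obstacle'' and list the ingredients one would need, but you do not actually resolve it, and it is not a bookkeeping issue: Lemma~\ref{basept} and Theorem~\ref{reduced} are stated for flows smooth on $[0,\infty)$, so neither the monotonicity nor the $\lim_{\t\to 0+}\tilV=1$ endpoint is available for $g_0$, and taking $\t_1\to 0+$ in (\ref{shrinkdist}) passes through the degenerate metric $g_0(\t)=\frac{\t}{\lambda}\psi_\t^*g$. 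So the first step of your plan is not established.

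There is also a concrete error in your transfer step. With $\hat g:=\tilde g_\delta=g_0(\cdot+\delta)$ (smooth on $[0,\infty)$) one has $g_1(\t)=\hat g(\t+(1-\delta))$, i.e.\ $g_1$ is the flow based at the \emph{later} time. Lemma~\ref{basept} then gives $\tilNu(g_1)\ge\tilNu(\tilde g_\delta)$, not $\le$ as you wrote. That is the same direction as inequality~(\ref{goal}); it does not furnish the reverse bound, so even granting the heuristic constancy your argument does not close. (Note also that $g_1$ and $\tilde g_\delta$ are related by a \emph{time-dependent} diffeomorphism and scaling; $\tilNu$ is invariant under time-independent diffeomorphisms and parabolic rescalings, not under arbitrary time-dependent ones, so you cannot simply declare them equal.)

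The paper's actual proof goes the other way and never touches the singular time. It works entirely with $g_1$, uses the \emph{second} inequality of Sublemma~\ref{timeshift} (the lower bound with the factor $\alpha^{\t_\Delta}_{\t_p,\bt}$) together with (\ref{shrinkdist}) to bound $\l^{g_1}_{(p,0)}(q,\bt-1)$ from below by $\alpha_i f(q,\bt)-C(\t_i)\bt^{-1/2}$ with $\alpha_i=\sqrt{1-1/\t_i}<1$, and then --- this is the point you are missing --- exploits $\alpha_i<1$ so that $\Ric+\Hess(\alpha_i f)\ge(\frac{\alpha_i}{2\lambda}-(1-\alpha_i)K)g>0$, which makes $\int_M{\rm e}^{-\alpha_i f}d\mu_g$ finite by the Bakry--Emery volume bound. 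This yields $\tilNu(g_1)\le\int_M(4\pi\lambda)^{-n/2}{\rm e}^{-\alpha_i f}d\mu_g$, and letting $i\to\infty$ gives the reverse of (\ref{goal}). You should redo the argument along these lines rather than trying to make the heuristic constancy rigorous.
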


\begin{proof}
Take a sequence $\{\t_i\}_{i \in \Z}$ with $\t_i \to
\infty$ as $i \to \infty$ and put $\alpha_i :=
\sqrt{1-\frac{1}{\t_i}}$. Fix $r>0$ and $\bt>0$
sufficiently large. For any $q \in \K_i(\bt-1) := \L^{g_1}
K_{\bt-1, \t_i-1}(p, r)$, take $p_i :=\gamma(\t_i-1) \in
\K_i :=\L^{g_1} K_{\t_i-1, \t_i-1}(p, r)$, where $\gamma$
is the minimal $\L^{g_1}$-geodesic from $(p, 0)$ to~$(q,
\bt-1)$.

It follows from the combination of Sublemma~\ref{timeshift}
and (\ref{shrinkdist}) that
\begin{align*}
\l^{g_1}_{(p, 0)}(q, \bt-1)
&= \frac{1}{2\sqrt{\bt-1}} \( L^{g_1}_{(p_i, \t_i-1)}(q, \bt-1) + L^{g_1}_{(p, 0)}(p_i, \t_i-1) \)\\
&\ge \alpha_i \frac{1}{2\sqrt{\bt}} L^{g_0}_{(p_i, \t_i)}(q, \bt)\\
&= \alpha_i \(f(q, \bt)- \sqrt\frac{\t_i}{\bt}f(p_i, \t_i) \)\\
&\ge \alpha_i \( f(q, \bt)- \sqrt\frac{\t_i}{\bt} \max_{\K_i}f(\cdot, \t_i) \)\\
&= \alpha_i f(q, \bt) -C(\t_i)\bt^{-1/2}.
\end{align*}
Recall that $L^{g_1}_{(p, 0)}(\cdot, \cdot) \ge0$, which
follows from the non-negativity of the scalar curvature of
$g_1(\t)$ (Proposition~\ref{ancient}), and that $\K_i$ is
compact.

Thus, by Proposition \ref{epsilon},
\begin{align*}
\tilNu(g_1)&\le \(1-\frac{1}{\bt}\)^{n/2} \tilV_{(p, 0)}^{g_1}(\bt-1) + \epsilon(r)\\
&\le \int_{\K_i(\bt-1)} (4\pi\bt)^{-n/2}\exp\({-\l_{(p, 0)}^{g_1}(\cdot, \bt- 1)}\) d\mu_{g_0(\bt)} +3\epsilon(r)\\
&\le {\rm e}^{C(\t_i)\bt^{-1/2}} \int_{\K_i(\bt-1)} (4\pi\bt)^{-n/2}{\rm e}^{-\alpha_if(\cdot, \bt)}d\mu_{g_0(\bt)} +3\epsilon(r)\\
&\le {\rm e}^{C(\t_i)\bt^{-1/2}} \int_M (4\pi\lambda)^{-n/2}{\rm e}^{-\alpha_if}d\mu_{g} +3\epsilon(r).
\end{align*}

Now we observe that ${\rm e}^{-\alpha_if}$ is integrable
for large $i \in \Z$. To do this, let us recall that
$f$-volume $\int_M {\rm e}^{-f}d\mu$ is finite if the
Bakry--Emery tensor $\Ric+\Hess f$ is bounded below by
positive constant~\cite{M,WW}. In our case, we know that
\begin{equation*}
\Ric+\Hess \alpha_if \ge
\(\frac{\alpha_i}{2\lambda}-(1-\alpha_i)K\) g >0
\end{equation*}
and hence $\int_M {\rm e}^{-\alpha_if}d\mu_g$ makes sense
for all large $i \in \Z$.

Since $r>0$ and $\bt>0$ are arbitrary, we have obtained that
\begin{equation}\label{rvollefvol}
\tilNu(g_1) \le \int_M (4\pi\lambda)^{-n/2}{\rm e}^{-\alpha_if}d\mu_{g} \quad (< \infty)
\end{equation}
and the right-hand side of (\ref{rvollefvol}) converges to
the normalized $f$-volume as $i\to\infty$. Combined with
(\ref{goal}), this completes the proof of the proposition.
\end{proof}

\subsection{Expanding Ricci solitons}

Finally, we consider gradient expanders of non-negative
Ricci curvature and prove the result corresponding to
Corollary~\ref{main3} for them. A {\it gradient expanding
Ricci soliton} is a triple $(M, g, f)$ satisfying
\begin{equation*}
\Ric-\Hess f+\frac{1}{2\lambda}g=0
\end{equation*}
for some positive constant $\lambda>0$. We normalize $f \in
C^\infty(M)$ so that $R+ |\nabla f|^2-\lambda^{-1}f=0$ on
$M$ for the expander $(M, g, f)$~too.

\begin{prop}[{\rm \cite{CN}}]\label{CN}
Let $(M^n, g, f)$ be a complete expanding Ricci soliton
with non-negative Ricci curvature. Then
\begin{enumerate}
\item[(1)]$M^n$ is diffeomorphic to $\R^n$.
\item[(2)]We have
\begin{equation}\label{eq:expandfvol}
\int_M (4\pi\lambda)^{-n/2}{\rm e}^{-f}\,d\mu_g \le 1
\end{equation}
and the equality holds if and only if $(M^n, g, f)$ is, up
to scaling, the expanding Gaussian soliton $(\R^n, g_{\rm
E}, \frac{|\,\cdot\,|^2}{4})$.
\end{enumerate}
\end{prop}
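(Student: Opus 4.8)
The plan is to deduce Proposition~\ref{CN} from Lemma~\ref{Ricciflat} by showing that the normalized potential $f$ of an expander with $\Ric\ge0$ dominates one quarter of the squared distance to the point where it attains its minimum; then $e^{-f}$ is pointwise bounded by a Gaussian and the static reduced volume inequality does the rest. For part~(1): the soliton equation gives $\Hess f=\Ric+\frac{1}{2\lambda}g\ge\frac{1}{2\lambda}g>0$, so $f$ is a smooth strictly convex function, and, $(M,g)$ being complete, $f$ grows at least quadratically along every geodesic issuing from a fixed point. Hence $f$ has compact sublevel sets and a unique critical point $p_0$, which is its global minimum, and $M^n$ is diffeomorphic to $\R^n$ by the standard theory of complete manifolds carrying a strictly convex exhaustion function (the argument of~\cite{CN}). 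This is the only place where outside input enters.

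For part~(2), first determine $f(p_0)$: evaluating the normalization $R+|\nabla f|^2-\lambda^{-1}f=0$ at $p_0$, where $\nabla f=0$, gives $f(p_0)=\lambda R(p_0)\ge0$, because $\Ric\ge0$ forces $R\ge0$. Next, for $q\in M$ take a unit-speed minimal geodesic $\gamma\colon[0,\ell]\to M$ from $p_0$ to $q$ with $\ell=d(p_0,q)$, and put $\phi(s):=f(\gamma(s))$; then $\phi'(0)=\langle\nabla f(p_0),\gamma'(0)\rangle=0$ and $\phi''(s)=\Hess f(\gamma',\gamma')\ge\frac{1}{2\lambda}$, so integrating twice yields $f(q)\ge f(p_0)+\frac{d(p_0,q)^2}{4\lambda}\ge\frac{d(p_0,q)^2}{4\lambda}$ for every $q\in M$. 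Therefore $e^{-f}\le\exp\bigl(-d(p_0,\cdot)^2/4\lambda\bigr)$ pointwise, and, regarding $(M^n,g)$ as a stationary super Ricci flow ($\dertau{}g=0\le2\,\Ric$) and invoking Lemma~\ref{Ricciflat} with base point $p_0$ and $\t=\lambda$,
\begin{align*}
\int_M(4\pi\lambda)^{-n/2}e^{-f}\,d\mu_g
&\ \le\ \int_M(4\pi\lambda)^{-n/2}\exp\!\left(-\frac{d(p_0,q)^2}{4\lambda}\right)d\mu_g(q)\\
&\ =\ \tilV_{(p_0,0)}(\lambda)\ \le\ 1,
\end{align*}
which is exactly $(\ref{eq:expandfvol})$.

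For the rigidity: if the left-hand side equals $1$ then all the inequalities above are equalities, so in particular $\tilV_{(p_0,0)}(\lambda)=1$, and the equality case of Lemma~\ref{Ricciflat} gives $(M^n,g)\cong(\R^n,g_{\rm E})$. It remains to check that a flat normalized gradient expander is, up to scaling, the expanding Gaussian soliton: on $(\R^n,g_{\rm E})$ the equation $\Hess f=\frac{1}{2\lambda}g_{\rm E}$ forces $f(x)=\frac{|x-x_0|^2}{4\lambda}+c$, and substituting this (with $R=0$) into $R+|\nabla f|^2-\lambda^{-1}f=0$ forces $c=0$, so after translating $x_0$ to the origin one obtains $(\R^n,g_{\rm E},\frac{|\,\cdot\,|^2}{4})$ up to the rescaling normalizing $\lambda$. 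Conversely the quantity $\int_M(4\pi\lambda)^{-n/2}e^{-f}\,d\mu_g$ is invariant under homothetic scaling of $g$ (which scales $\lambda$ in step), so every rescaling of the Gaussian expander realizes equality in $(\ref{eq:expandfvol})$.

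The main obstacle, such as it is, is not analytic: given Lemma~\ref{Ricciflat}, part~(2) is just the comparison above, and the only delicate points are the existence of the minimum $p_0$ — equivalently properness of $f$, which rests on $\Hess f\ge\frac{1}{2\lambda}g>0$ and completeness — and the sign $f(p_0)\ge0$, failing which the Gaussian bound would only give $\int_M(4\pi\lambda)^{-n/2}e^{-f}\,d\mu_g\le e^{-f(p_0)}$, which is vacuous. The one genuinely external ingredient is the diffeomorphism statement in part~(1), which I would settle by citing the standard classification of complete manifolds admitting a strictly convex exhaustion function.
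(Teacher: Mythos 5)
Your proof is correct, but it takes a genuinely more elementary route than the paper's. The paper proves part~(2) by constructing the self-similar forward Ricci flow $g_0(t)=\frac{t}{\lambda}(\psi_t)^*g$, introducing the forward $\L$-length, the forward reduced distance $\l^+$, and the formal forward reduced volume $\widehat{V}^{g_1}_{(p,0)}(t)$ of~(\ref{formalforward}); it then bounds $\l^+$ from below by $\frac{1}{4\lambda}d_g(p,\psi_{\btt}(\cdot))^2$ using the fact that the metric is monotone in $t$ under the Ricci flow with $\Ric\ge0$, and compares $\int_M(4\pi\lambda)^{-n/2}{\rm e}^{-f}d\mu_g$ with $\liminf_{t\to\infty}\widehat{V}^{g_1}_{(p,0)}(t)$ by repeating the triangle-inequality argument from the proof of~(\ref{goal}). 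You bypass the entire forward reduced-volume apparatus and instead derive the \emph{pointwise} Gaussian bound $f(q)\ge\frac{d(p_0,q)^2}{4\lambda}$ directly from the soliton equation: integrating $\Hess f\ge\frac{1}{2\lambda}g$ twice along a minimal geodesic from the critical point $p_0$, and using the normalization~(\ref{fnormalize}) to get $f(p_0)=\lambda R(p_0)\ge0$. Both arguments funnel through the same comparison
\begin{equation*}
\int_M (4\pi\lambda)^{-n/2}{\rm e}^{-f}\,d\mu_g
\ \le\ \int_M(4\pi\lambda)^{-n/2}\exp\!\Bigl(-\frac{d_g(p_0,\cdot)^2}{4\lambda}\Bigr)\,d\mu_g
\ \le\ 1,
\end{equation*}
with the last step and the rigidity supplied by Lemma~\ref{Ricciflat}. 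Your route is shorter and self-contained; it does not need the time-shifting Sublemma~\ref{timeshift} or the self-similar flow, and it makes transparent exactly where $\Ric\ge0$ and the choice of normalization enter (through $\Hess f\ge\frac{1}{2\lambda}g$ and $f(p_0)\ge0$, respectively). What it loses is the conceptual link the paper draws between the normalized $f$-volume and the forward reduced volume, which is part of the thematic point of Section~5. Both proofs treat part~(1) and the identification of the flat expander with the Gaussian soliton in essentially the same way, so nothing is missing.
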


We remark that the proposition is a restatement of a result
of~\cite{CN}. Because our proof is simple and purely
geometric in contrast to the one in~\cite{CN}, we decided
to include it here.

\begin{proof}[Proof of Proposition~\ref{CN}]
First, we note that the potential function $f$ is bounded
below and $\frac{1}{2\lambda}$-convex, i.e., $\Hess f \ge
\frac{1}{2\lambda}g>0$. Therefore, $f$ has the unique
critical point $p\in M$ where the minimum value of $f$ is
attained. Part (1) of the proposition follows from this.

Next, as in the proof of Corollary~\ref{main3}, we
construct a self-similar solution to the (forward) Ricci
flow $g_0(t):=\frac{t}{\lambda}(\psi_t)^{*}g, t\in(0,
\infty)$ and put $g_1(t) := g_0(t+1), t\in [0, \infty)$.

Define the {\it forward reduced distance} at $(q, \btt) \in M \times (0, \infty)$ by
\begin{equation*}
\l_{(p, 0)}^{+}(q, \btt) := \frac{1}{2\sqrt{\btt}} \inf
\bigl\{ \L^{+}(\gamma) ;\  \gamma(0)=p, \gamma(\btt)=q
\bigr\}
\end{equation*}
where we defined the {\it forward $\L$-length}
$\L^{+}(\gamma)$ of $\gamma : [0, \btt]\to M$ by
\begin{equation*}
\L^{+}(\gamma) := \int_0^{\btt} \sqrt{t}
\(\|\frac{d\gamma}{dt}\|^2_{g_1(t)} +R_{g_1(t)}(\gamma(t))
\) dt.
\end{equation*}
Then we consider the formal reduced volume defined by
\begin{equation}\label{formalforward}
\widehat{V}_{(p, 0)}^{g_1}(t) := \int_M (4\pi t)^{-n/2}{\rm e}^{-\l^{+}_{(p, 0)}(\cdot, t)}d\mu_{g_1(t)}.
\end{equation}
We do not care whether $\widehat{V}^{g_1}_{(p, 0)}(t)$ is
monotone. (This is the case when $g_1(t)$ has bounded
non-negative curvature operator or non-negative bi-sectional
curvature in the K\"{a}hler case~\cite{Ni-LYH}.)

Since $(M^n, g_1(t))$ has non-negative Ricci curvature, we
have
\begin{align*}
\l^{+}_{(p, 0)}(q, \btt-1)&\ge \frac{1}{2\sqrt{\btt-1}} \inf_{\gamma} \int_0^{\btt-1} \sqrt{t}\|
\frac{d\gamma}{dt}\|_{g_1(\btt-1)}^2 dt\\
&= \frac{1}{4(\btt-1)}d_{g_1(\btt-1)}(p, q)^2 \ge
\frac{1}{4\lambda}d_g(p, \psi_{\btt}(q))^2
\end{align*}
and by Lemma~\ref{Ricciflat},
\begin{align*}
\(\frac{\btt-1}{\btt}\)^{n/2}\widehat{V}_{(p, 0)}^{g_1}(\btt-1)
&\le \int_M(4\pi \btt)^{-n/2}\exp\({-\frac{1}{4\lambda}d_g(p, \psi_{\btt}(\cdot))^2}\) d\mu_{g_0(\btt)}\\
&= \int_M (4\pi\lambda)^{-n/2}\exp\({-\frac{d_g(p, \cdot)^2}{4\lambda}}\) d\mu_g\\
&\le 1.
\end{align*}

Then, from the same argument as in the derivation
of~(\ref{goal}) in the proof of Corollary~\ref{main3}, we
derive that
\begin{equation*}
\int_M (4\pi\lambda)^{-n/2}{\rm e}^{-f}d\mu_g \le
\liminf_{t\to\infty} \widehat{V}_{(p, 0)}^{g_1}(t)
\end{equation*}
and hence
\begin{equation}\label{forwardreduced}
\int_M (4\pi\lambda)^{-n/2}{\rm e}^{-f}d\mu_g \le \int_M
(4\pi\lambda)^{-n/2}\exp\({-\frac{d_g(p,
\cdot)^2}{4\lambda}}\) d\mu_g \le 1
\end{equation}
which yields~(\ref{eq:expandfvol}).

When the normalized $f$-volume is $1$, we have equalities
in (\ref{forwardreduced}). Then we know from the equality
case of Lemma~\ref{Ricciflat} that $(M^n, g)$ is isometric
to the Euclidean space. The only way to regard $(\R^n,
g_{\rm E})$ as a gradient expanding Ricci soliton is the Gaussian
soliton, up to rescaling. This finishes the~proof.
\end{proof}

\section{Concluding remarks}

In this section, we collect some remarks.
\begin{rem}\label{rem:entropy}
Let $(M^n, g(\t)), \t\in [0, T)$ be a super Ricci flow
$\dertau{}g =:2h \le 2\,\Ric$ satisfying
Assumption~\ref{as} on a closed manifold $M$. Put $H:= \tr
h$. Following~\cite{P}, we define the {\it
$\mathcal{W}$-entropy} for a triple $(g(\t), f, \t)$ by
\begin{equation}
\mathcal{W}(g(\t), f, \t) = \int_M \[\t(|\nabla f|^2 +H) +
f-n\]u\,d\mu_{g(\t)}
\end{equation}
where $f$ is a smooth function on $M^n$, $\t>0$ and
$u:=(4\pi\t)^{-n/2}{\rm e}^{-f}$.

We evolve $u$ by the conjugate heat equation $\dertau{}u =
\varDelta_{g(\t)} u - H u$, or equivalently,
\begin{equation*}
\dertau{f} = \varDelta_{g(\t)} f -|\nabla f|^2 + H
-\frac{n}{2\t}.
\end{equation*}

Then, by simple calculation, we obtain the entropy formula
for the super Ricci flow:
\begin{align*}
&{d\over d\t} \mathcal{W}(g(\t), f, \t)\\
&\quad=-2\t \int_M \biggl[\Big|h +\Hess f -\frac{1}{2\t}g\Big|^2+(dH-2{\rm div}h)(\nabla f)\\
&\qquad + (\Ric -h)(\nabla f, \nabla f)- \frac{1}{2}\(
\dertau{H} + \varDelta_{g(\t)} H + 2|h|^2\) \biggl] u\,
d\mu_{g(\t)}\\
&\quad\le 0
\end{align*}
from which we simultaneously recover the entropy formulae
of Perelman $(h=\Ric)$~\cite{P} and Ni
$(h=0)$~\cite{Ni-Ent}.

We also have similar formula for the super Ricci flow analogue
of\break {\it $\mathcal{F}$-entropy} introduced in
\cite[Section~1]{P}.
\end{rem}

\begin{rem}
(1) We can find the optimal value $\epsilon_n$ of the constant
obtained in Theorem~\ref{main}, namely $\epsilon_n := 1-
\max\{\tilNu(g)\} >0$. We take the maximum over all the complete
$n$-dimensional non-Gaussian ancient solutions to the Ricci flow
with Ricci curvature bounded below. The maximum is achieved, as
is seen by the limit argument used in the proof of
Lemma~\ref{main2}. Then it is easy to see that
$\{\epsilon_n\}_{n=2}^\infty$ is a non-increasing sequence. It
seems interesting to determine the exact value of $\lim_{n\to
\infty} \epsilon_n$.

(2) Now we calculate an asymptotic reduced volume (or
normalized\break $f$-volume) for the round $n$-sphere $(S^n,
g_{S^n})$ with constant Ricci curvature
$\Ric=\frac{1}{2}g_{S^n}$. Then $g(\t) := (1+\t)g_{S^n},
\t\in[0, \infty)$ is an ancient solution to the Ricci flow,
while $(S^n, g_{S^n}, f)$ with $f \equiv \frac{n}{2}$ is a
gradient shrinking Ricci soliton. By
Proposition~\ref{fvolredvol},
\begin{align*}
\tilNu(g)&= \int_{S^n} (4\pi)^{-n/2}{\rm e}^{-n/2}d\mu_{g_{S^n}}\\
&= \frac{\sqrt{2\pi}m^{m+\frac{1}{2}} {\rm
e}^{-m}}{\Gamma(m+1)} \sqrt{\frac{2}{e}} \nearrow
\sqrt{\frac{2}{e}}\quad \hbox{as } n\nearrow \infty.
\end{align*}
Here we have put $n=2m+1, m\in \frac{1}{2}\Z$
and used that
$\vol(S^n, \frac{1}{2(n-1)}g_{S^n}) = 2\pi^{m+1}/\Gamma(m+1)$ and Stirling's formula:
\begin{equation*}
\Gamma(m+1) = \sqrt{2\pi} m^{m+\frac{1}{2}}{\rm e}^{-m}
{\rm e}^{\theta(m)} \hbox{ for }m>0,
\end{equation*}
where $\theta(m) \searrow 0$ as $m\nearrow \infty$. This
gives an upper bound for the constant $\epsilon_n$ obtained
in Theorem~\ref{main}:
\begin{equation*}
\epsilon_n \le 1- {\rm e}^{-\theta(m)} \sqrt{2{\rm e}^{-1}} \searrow
1-\sqrt{2{\rm e}^{-1}} \hbox{ as } n\nearrow \infty.
\end{equation*}
\end{rem}

\begin{rem}\label{rem:funda}
Theorem~\ref{finitefundaRicci} has another corollary which
was pointed out by Professor Lei Ni.
\end{rem}
\begin{cor}\label{cor:blowupfunda}
Let $(M^n, g(t)), t\in[0, T)$ be a complete Ricci flow
with\break bounded curvature and positive injectivity radius at
$t=0$ which develops singularity at finite time $t=T<\infty$.
Then any singularity model of $(M^n, g(t))$ has finite
fundamental~group.
\end{cor}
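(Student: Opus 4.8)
The plan is to realize any singularity model of $(M^n, g(t))$ as a complete, $\kappa$-noncollapsed, ancient Ricci flow with bounded curvature, and then to apply Theorem~\ref{finitefundaRicci}. Recall the construction: as $g(t)$ becomes singular at $T<\infty$ while having bounded curvature on $M\times[0,t]$ for each $t<T$, one has $\sup_{M\times[0,t]}|\Rm|\to\infty$, so there are $(p_k, t_k)$ with $t_k\to T$ and $Q_k:=|\Rm|(p_k, t_k)\to\infty$; after Perelman's point-picking lemma (Lemma~\ref{pointpick}) we may assume $|\Rm|\le 2Q_k$ on parabolic neighbourhoods of $(p_k, t_k)$ of rescaled radius $\to\infty$. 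The rescaled flows $\tilde{g}_k(t):=Q_k\,g(t_k+Q_k^{-1}t)$ live on $[-Q_kt_k,\, Q_k(T-t_k))$, whose left endpoint tends to $-\infty$ since $t_k\to T>0$, and a singularity model is by definition a pointed $C^\infty$ Cheeger--Gromov limit $(M_\infty^n, g_\infty(t), p_\infty)$ of a subsequence of $(M, \tilde{g}_k(t), p_k)$. By Hamilton's compactness theorem~\cite{Ha-Comp} such a limit exists, as a complete ancient Ricci flow with bounded curvature, provided $\inj(M, \tilde{g}_k(0))$ at $p_k$ is uniformly bounded below.

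That injectivity-radius bound, together with the noncollapsing I will need afterwards, comes from no local collapsing. Since $g(t)$ has bounded curvature on compact time intervals and $\inj(M, g(0))>0$, Perelman's no local collapsing theorem~\cite{P} (see also~\cite{Ye} and~\cite[Section~8]{V2}) yields $\kappa=\kappa(n, g(0), T)>0$ with $g(t)$ being $\kappa$-noncollapsed on all scales $\le\sqrt T$. This property is scale invariant, so each $\tilde{g}_k$ is $\kappa$-noncollapsed on all scales $\le Q_k^{1/2}\sqrt T$, and hence the limit $(M_\infty, g_\infty)$ is $\kappa$-noncollapsed on all scales; together with the curvature bound, Cheeger's lemma gives $\inj(M_\infty, g_\infty(t))>0$, which legitimizes the compactness step above.

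Reversing time (and restricting to $\tau\ge0$), regard $(M_\infty, g_\infty(\tau)),\ \tau\in[0,\infty)$, as a complete ancient backward Ricci flow with bounded --- hence bounded-below --- Ricci curvature. The remaining step, which is the heart of the matter, is to show $\tilNu(g_\infty)>0$: for each $\tau>0$ one picks $q_\tau\in M_\infty$ with $\l_{(p_\infty,0)}(q_\tau,\tau)\le n/2$ (Perelman's bound on the minimum of the reduced distance), uses the curvature bound and the reduced-distance estimates of Section~2 to control $\l_{(p_\infty,0)}(\cdot,\tau)$ on a suitable $g_\infty(\tau)$-ball about $q_\tau$, and invokes $\kappa$-noncollapsing to obtain $\tilV_{(p_\infty,0)}(\tau)\ge c(n,\kappa)>0$ uniformly in $\tau$; this is the argument of~\cite[Lemma~8.38]{V2}. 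Hence $\tilNu(g_\infty)\ge c(n,\kappa)>0$, and Theorem~\ref{finitefundaRicci} gives $|\pi_1(M_\infty)|<\infty$. The main obstacle is precisely this positivity of the asymptotic reduced volume of the blow-up limit: for a genuine ancient $\kappa$-solution it is exactly the input of Corollary~\ref{kappafunda}, but since in dimension $\ge 4$ a singularity model need not have nonnegative curvature operator, one has to run the reduced-volume lower bound directly from $\kappa$-noncollapsing and the bounded-curvature reduced-distance estimates, checking that no curvature-sign hypothesis is secretly used. Everything else is the standard blow-up and compactness package.
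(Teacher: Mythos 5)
Your proof is correct and follows essentially the same route as the paper's: blow up at the singularity using Perelman's no local collapsing theorem and Hamilton's compactness theorem, observe that the resulting singularity model is an ancient solution with positive asymptotic reduced volume, and then apply Theorem~\ref{finitefundaRicci}. The paper simply cites \cite[Lemma~8.22]{V2} for the positivity of $\tilNu$ of the singularity model, which is exactly the $\kappa$-noncollapsing argument you sketch ($\min_M \l(\cdot,\t)\le n/2$ plus the reduced-distance gradient estimates plus $\kappa$-noncollapsing); unlike \cite[Lemma~8.38]{V2} for ancient $\kappa$-solutions, that lemma makes no curvature-sign hypothesis, so the concern you flag at the end is handled by invoking it directly.
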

The singularity model is the limit of dilations of $(M^n, g(t))$
around a singular point (see \cite[Chapter 8]{CLN} for the
precise definition). We can take such a blow-up limit in the
corollary by virtue of Perelman's no local collapsing theorem
\cite[Section~7]{P} and Hamiton's compactness
theorem~\cite{Ha-Comp}.\break The corollary immediately follows
from the fact that such a singularity\break model is an ancient
solution with positive asymptotic reduced volume\break (cf.
\cite[Lemma 8.22]{V2}).

We will be able to use this corollary in order to
understand the singularities of the Ricci flow further. For
example, we can prove the following: for any ancient
solution $(N^{n-1}, g_N(t)), t\in(-\infty, \alpha)$, the
canonical ancient solution on $S^1 \times N^{n-1}$ cannot
occur as a blow-up limit of the Ricci flow as in
Corollary~\ref{cor:blowupfunda}. In the case where $N$ is a
sphere, this result was conjectured by Hamilton
\cite[Section~26]{Ha-Form} and proved by
Ilmanen--Knopf~\cite{IK}.

\begin{rem}
(1) Feldman--Ilmanen--Ni \cite{FIN} have discovered the
{\it forward reduced volume} $\tilV^{+}_{(p, 0)}(t)$ for
the (forward) Ricci flow $(M^n, g(t)), t\in[0, T)$ which is
non-increasing in $t$. However, its definition is given by
\begin{equation*}
\tilV^{+}_{(p, 0)}(t) := \int_M (4\pi t)^{-n/2}{\rm
e}^{\l^{+}_{(p, 0)}(\cdot, t)}\,d\mu_{g(t)}
\end{equation*}
(cf. with (\ref{formalforward})) and it is not well defined
for general non-compact manifolds. It is not likely that
Theorem~\ref{main} has an analogue for the forward reduced
volume $\tilV^{+}_{(p, 0)}(t)$.

(2) One can also easily generalize the monotonicity of
$\tilV^{+}_{(p, 0)}(t)$ to the forward super Ricci flows
$\dert{}g\ge -2\,\Ric$, if the condition corresponding to
Assumption~\ref{as} is imposed.
\end{rem}

\begin{rem}\label{finalrem}
In Carrillo--Ni's preprint~\cite{CN}, the potential
function $f$ of the gradient Ricci soliton $(M^n, g, f)$ is
normalized so that
\begin{equation*}
\int_M (4\pi\lambda)^{-n/2}{\rm e}^{-f}d\mu_g =1.
\end{equation*}
Then their main result is the logarithmic Sobolev
inequality for gradient Ricci solitons with $\mu(g, f) :=
\lambda(R+|\nabla f|^2)-f$ as the best constant. They also
showed that $\mu(g, f) \ge0$ for gradient shrinking Ricci
solitons (under the curvature condition stronger than ours)
and conjectured that $\mu(g, f)=0$ implies that it is the
Gaussian soliton. It is easily checked that $\mu(g, f)=
-\log\vol_f(M)$, where $\vol_f(M)$ is the normalized
$f$-volume of $(M^n, g, f)$ with $f$ being normalized in
our sense as in (\ref{fnormalize}). Hence,
Corollary~\ref{main3}.(3) gives an affirmative answer to
the conjecture in~\cite{CN}.
\end{rem}

\begin{rem}
After the first version of this paper was completed, the
result of Zhang~\cite{Z} came to the author's attention. It
states that for any gradient Ricci soliton $(M, g, f)$, the
completeness of $g$ implies that of $\nabla f$. Recall that
we have used the assumption that $\Ric \ge -K$ for some
$K\in \R$ in the proof of Corollary~\ref{main3} only to
ensure the completeness of $\nabla f$ and the existence of
minimal $\L$-geodesics between any two points in
space--time. A natural question is whether the assumption
on $\Ric$ in the statement of Corollary~\ref{main3} is
superfluous.
\end{rem}

\appendix

\def\thesection{A}\setcounter{section}{0}
\section*{Appendix}

In this appendix, we present very detailed proofs to the
facts on the super Ricci flow used in the proof of main
theorem. The proofs rely on the following lemma whose proof
in~\cite{P} works as well for the super Ricci flow.

\begin{lem}[{\rm \cite[Lemma 8.3]{P}}]\label{Pelem}
Let $(M^n, g(\tau))$ be a complete super Ricci flow.
\begin{enumerate}
\item[(a)]
Assume that $\Ric(\cdot, \t_0) \le (n-1)K$ on the ball $B_{\t_0}(x_0, r_0)$.
Then outside of $B_{\t_0}(x_0, r_0)$,
\begin{equation*}
\(\dertau{}+\varDelta_{g(\t_0)}\) d_{\t_0}(\cdot, x_0) \le
(n-1)\(\frac{2}{3} Kr_0+r_0^{-1}\).
\end{equation*}
The inequality is understood in the barrier sense.
\item[(b)]
Assume that $\Ric(\cdot, \t_0) \le (n-1)K$ on the union of the balls $B_{\t_0}(x_0, r_0)$ and $B_{\t_0}(x_1, r_0)$.
Then
\begin{equation*}
\frac{d^+}{d\t} d_{\t}(x_0, x_1)\|_{\t=\t_0} \le
2(n-1)\(\frac{2}{3}Kr_0 + r_0^{-1}\).
\end{equation*}
Here, $\frac{d^+}{d\t}f(\t) :=
\limsup_{\epsilon\to0+}\frac{f(\t+\epsilon)-f(\t)}{\epsilon}$
denotes the upper Dini\break derivative.
\end{enumerate}
\end{lem}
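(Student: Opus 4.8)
The plan is to follow Perelman's proof of \cite[Lemma 8.3]{P} essentially verbatim; the key observation is that that proof invokes only the first and second variation formulae for arc length together with the one-sided inequality $\dertau{}g \le 2\Ric$, and never the Ricci flow equation itself. Throughout, for a unit-speed minimal geodesic of $g(\t_0)$ I write $X$ for its velocity and $e_1,\dots,e_{n-1}$ for parallel orthonormal fields along it perpendicular to $X$.

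\emph{Part (a).} Fix $x$ outside $B_{\t_0}(x_0, r_0)$, put $d := d_{\t_0}(x, x_0)>r_0$, and let $\gamma:[0,d]\to M$ be a unit-speed minimal $g(\t_0)$-geodesic from $x_0$ to $x$. First I would record the first-variation estimate, which comes from Hamilton's trick and $\dertau{}g\le 2\Ric$:
\begin{equation*}
\dertau{}d_{\t_0}(x, x_0) \le \frac12\int_0^d (\dertau{}g)(X, X)\, ds \le \int_0^d \Ric(X, X)\, ds .
\end{equation*}
Next, plugging the test fields $V_i(s) := \min(s/r_0, 1)\,e_i(s)$ into the index form of $\gamma$, which dominates $\varDelta_{g(\t_0)}d$ by the standard comparison (Jacobi fields minimize the index form), I would get
\begin{equation*}
\varDelta_{g(\t_0)}d_{\t_0}(x, x_0) \le \frac{n-1}{r_0} - \int_0^{r_0}\frac{s^2}{r_0^2}\Ric(X, X)\, ds - \int_{r_0}^d \Ric(X, X)\, ds .
\end{equation*}
Adding the two displays cancels the tail over $[r_0, d]$ and leaves
\begin{equation*}
\(\dertau{} + \varDelta_{g(\t_0)}\) d_{\t_0}(\cdot, x_0) \le \frac{n-1}{r_0} + \int_0^{r_0}\(1 - \frac{s^2}{r_0^2}\)\Ric(X, X)\, ds .
\end{equation*}
Since $\gamma([0, r_0])\subset B_{\t_0}(x_0, r_0)$ and $1 - s^2/r_0^2\ge 0$ there, the hypothesis $\Ric\le (n-1)K$ bounds the last integral by $(n-1)K\int_0^{r_0}(1-s^2/r_0^2)\,ds = \frac{2}{3}(n-1)Kr_0$, which is exactly the asserted inequality.

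\emph{Part (b).} Let $\gamma:[0,d_0]\to M$ be a unit-speed minimal $g(\t_0)$-geodesic from $x_0$ to $x_1$, with $d_0 := d_{\t_0}(x_0, x_1)$; I may assume $d_0\ge 2r_0$, which is the regime of interest (otherwise the whole geodesic lies in $B_{\t_0}(x_0, r_0)\cup B_{\t_0}(x_1, r_0)$ and a similar but simpler estimate applies). As in part (a), Hamilton's trick and $\dertau{}g\le 2\Ric$ give $\frac{d^+}{d\t}d_\t(x_0, x_1)\big|_{\t=\t_0}\le \int_0^{d_0}\Ric(X, X)\,ds$, and the point is to control the middle portion of this integral, over $[r_0, d_0-r_0]$, where there is no curvature bound. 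For that I would insert the symmetric tent fields $V_i(s) := \min(s/r_0, 1, (d_0-s)/r_0)\,e_i(s)$ into the index form of $\gamma$; its nonnegativity yields
\begin{equation*}
\int_{r_0}^{d_0-r_0}\Ric(X, X)\,ds \le \frac{2(n-1)}{r_0} - \int_0^{r_0}\frac{s^2}{r_0^2}\Ric(X, X)\,ds - \int_{d_0-r_0}^{d_0}\frac{(d_0-s)^2}{r_0^2}\Ric(X, X)\,ds .
\end{equation*}
Adding the untouched contributions over $[0, r_0]$ and $[d_0-r_0, d_0]$ and arguing as in part (a) on each of these two collars --- on which $\gamma$ lies in one of the two balls and the weight $1-(\cdot)^2/r_0^2$ is nonnegative --- one gets $\int_0^{d_0}\Ric(X, X)\,ds \le 2(n-1)(\frac{2}{3}Kr_0 + r_0^{-1})$, which is the claim.

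The step I expect to need the most care is not the geometry above, which is Perelman's and goes through unchanged, but the passage to the barrier/upper-Dini-derivative formulation at points where $d_{\t_0}(\cdot, x_0)$ fails to be smooth, i.e. on the cut locus of $x_0$ (and, for the $\t$-derivative, at times where the minimal geodesic is not unique). I would handle this exactly as in the Ricci flow case: at such a point, fix one minimal geodesic $\gamma$, replace the distance function by a smooth upper support function via Calabi's trick --- based at a point of $\gamma$ slightly displaced from $x_0$, which costs only an arbitrarily small error in the collar estimate --- and run the computation above along $\gamma$; in part (b), Hamilton's trick already produces the one-sided derivative bound directly, so only the test-field estimate along a fixed minimal geodesic is needed. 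The remaining checks (completeness of the geodesics involved, measurability of the integrands, etc.) are routine.
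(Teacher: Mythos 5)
Your proposal is correct and takes essentially the same approach as the paper. In fact the paper offers no proof of Lemma~\ref{Pelem} at all: it simply remarks that the proof of \cite[Lemma 8.3]{P} ``works as well for the super Ricci flow,'' and your write-up is precisely a verification of that remark, correctly isolating that Perelman's argument uses only the first and second variation of arc length together with the one-sided inequality $\dertau{}g \le 2\,\Ric$ (and not the full Ricci flow equation), so that it transfers verbatim. One small caveat worth flagging: in part (b) the stated numerical bound really requires $d_{\t_0}(x_0,x_1)\ge 2r_0$ (as in Perelman's original hypothesis, which the paper here leaves implicit); for $d_0<2r_0$ the tent-function estimate does not give the same constant, so your parenthetical ``a similar but simpler estimate applies'' should be read as ``the case is not needed'' rather than as producing the identical inequality.
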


\subsection{Perelman's Point picking lemma}

\begin{lem}[{\rm [25, Section 10; 17, Lemmas 30.1, 31.1]}]\label{pointpick}
Let $(M^n, g(\t)), \t \in[0, T)$ be a complete super Ricci
flow and $A, B >0$ are arbitrary numbers. Assume that there
exists a point $(x_1, \t_1)\in M(B)$, where $M(B) := \{ (x,
\t) \in M\kern-1pt \times\kern-1pt [0, T) ; |\Rm|(x, \t)(T\kern-1pt-\kern-1pt \t)\kern-1pt >\kern-1pt B \}$. Then
we can find a point $(p_*, \t_*)\kern-1pt \in\kern-1pt M(B)$ such that
\setcounter{equation}{0}
\begin{equation}\label{le2}
|\Rm|(x, \t) \le 2|\Rm|(p_*, \t_*) =: 2Q
\end{equation}
for all $(x, \t)$ with
$d_{\t_*}(x, p_*) < AQ^{-1/2}$ and $\t_* \le \t \le \t_* + \frac{1}{2}BQ^{-1}$.
\end{lem}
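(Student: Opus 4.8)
The plan is to carry out Perelman's standard point-picking iteration and to derive a contradiction from the failure of the process to terminate. Starting from the given point $(x_1, \t_1) \in M(B)$, set $Q_k := |\Rm|(x_k, \t_k)$ and proceed inductively: if $(x_k, \t_k)$ already satisfies $(\ref{le2})$ with $Q = Q_k$, stop — this is the point we want, with $(p_*, \t_*) := (x_k, \t_k)$. Otherwise, the negation of $(\ref{le2})$ produces a point $(x_{k+1}, \t_{k+1})$ with $d_{\t_k}(x_{k+1}, x_k) < A Q_k^{-1/2}$, $\t_k \le \t_{k+1} \le \t_k + \frac{1}{2} B Q_k^{-1}$ and $Q_{k+1} := |\Rm|(x_{k+1}, \t_{k+1}) > 2 Q_k$, which is then fed back into the iteration.

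The first thing to verify is that the iteration never leaves $M(B)$, so that it is self-sustaining. Since $(x_k, \t_k) \in M(B)$ means $T - \t_k > B Q_k^{-1}$, we get $T - \t_{k+1} \ge (T - \t_k) - \frac{1}{2} B Q_k^{-1} > \frac{1}{2} B Q_k^{-1}$, and hence $(T - \t_{k+1}) Q_{k+1} > \frac{1}{2} B Q_k^{-1}\cdot 2 Q_k = B$, i.e., $(x_{k+1}, \t_{k+1}) \in M(B)$. The same estimate shows $\t_k + \frac{1}{2} B Q_k^{-1} < T$ at every stage, so the parabolic neighbourhood appearing in $(\ref{le2})$ always lies inside $M \times [0, T)$, which is needed for the conclusion to make sense.

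Next I would show the process must terminate. From $Q_{k+1} > 2 Q_k$ we get $Q_k \ge 2^{k-1} Q_1 \to \infty$, while the times are non-decreasing with $\t_k \le \t_1 + \frac{1}{2} B Q_1^{-1}\sum_{j\ge 0} 2^{-j} = \t_1 + B Q_1^{-1} < T$, so all $\t_k$ lie in a compact subinterval $I := [\t_1, \t_1 + B Q_1^{-1}] \Subset [0, T)$. On $I$ the metrics $g(\t)$ are uniformly comparable — here I invoke the standing hypothesis that $\dertau{}g$ is bounded below on compact time intervals, say $\dertau{}g \ge -K g$ on $I$, giving $d_{\t_1}(\cdot, \cdot) \le {\rm e}^{KBQ_1^{-1}/2}\, d_\t(\cdot, \cdot)$ for $\t \in I$. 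Hence $d_{\t_1}(x_k, x_{k+1}) \le \con\cdot Q_k^{-1/2}$, and since $\sum_k Q_k^{-1/2} \le Q_1^{-1/2}\sum_k 2^{-(k-1)/2} < \infty$, all the $x_k$ stay inside a single closed $g(\t_1)$-metric ball $\bar B$, which is compact by completeness. Then $|\Rm|$ is bounded on the compact set $\bar B \times I \subset M \times [0, T)$ by smoothness of the flow, contradicting $Q_k \to \infty$. Therefore the iteration halts after finitely many steps, and its final point is the desired $(p_*, \t_*)$.

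The step I expect to be the main obstacle is precisely this termination argument: because the metric changes at every step, one must take care to conclude that the spatial iterates $x_k$ remain in one compact region, and this is exactly where the lower bound on $\dertau{}g$ enters, allowing each $d_{\t_k}$ to be compared with the fixed background distance $d_{\t_1}$. Once the $x_k$ are trapped in a compact set and the $\t_k$ in a compact subinterval of $[0, T)$, the contradiction with the smoothness of $(M, g(\t))$ is immediate, and everything else is routine bookkeeping with the geometric-series bounds above.
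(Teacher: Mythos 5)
Your proof is correct, and it takes a genuinely more direct route than the paper's. The paper follows Perelman's original two-claim structure: Claim 1 fixes an auxiliary basepoint $x_0\in M$ and iterates a \emph{modified} conclusion in which the bad point $(x,\t)$ is constrained by $d_\t(x,x_0)<d_{\t_*}(p_*,x_0)+A'Q^{-1/2}$ (distance measured at the variable time $\t$), which makes the iteration self-contained; Claim 2 then converts this back to the lemma's form $d_{\t_*}(x,p_*)<AQ^{-1/2}$ by invoking Perelman's distance-distortion estimate (Lemma~A.1(b)) and choosing the auxiliary constants $A'\ge 2A$, $\epsilon$ so that $4(n-1)B\epsilon\le1$ and $(\epsilon A')^2\ge 3/2$. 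You iterate the lemma's conclusion directly, so the bad point satisfies $d_{\t_k}(x_{k+1},x_k)<AQ_k^{-1/2}$, and you pay for this by needing to compare $d_{\t_k}$ with $d_{\t_1}$; that comparison is supplied by the standing hypothesis $\dertau{}g\ge -Kg$ on compact time intervals, giving $d_{\t_1}\le e^{KBQ_1^{-1}/2}d_{\t_k}$ on $I$, and the geometric-series bound then traps the iterates in one compact ball. You thereby avoid the auxiliary basepoint, the intermediate Claim, and the distance-distortion lemma entirely. It is worth noting that the paper's argument also implicitly uses the same metric comparability at the very end: to turn "bounded $\t_i$ and bounded $d_i=d_{\t_i}(x_i,x_0)$" into "the $(x_i,\t_i)$ lie in a compact subset of $M\times[0,T)$", one needs the balls $\bar B_{\t_i}(x_0,C)$ to sit inside one fixed compact set, which again comes from the lower bound on $\dertau{}g$. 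So the two proofs rest on the same hypotheses; yours simply makes the compactness step explicit and skips the machinery needed to track distances from a fixed basepoint at variable times. What the paper's route buys in exchange is fidelity to Perelman's original argument, which was designed to work without assuming any a priori comparability of the metrics; in the present setting that generality is not needed, and your shortcut is legitimate.
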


The proof is divided into two steps as in~\cite{P}.

\begin{clm}
Take $x_0\in M$ and $A^\prime>0$ satisfying that
$4(n-1)B\epsilon \le 1 \hbox{ and}\break (\epsilon A^\prime)^2
\ge 3/2 \hbox{ for some small } \epsilon>0 \hbox{ and }
A^\prime \ge 2A.$ Then we can find a point $(p_*, \t_*)\in
M(B)$ such that $(\ref{le2})$ holds for all $(x, \t)$ with
\begin{equation*}
d_{\t}(x, x_0) < d_{\t_*}(p_*, x_0) + A^\prime Q^{-1/2}
\hbox{ and  }\t_* \le \t \le \t_*+ \tfrac{1}{2}BQ^{-1}.
\end{equation*}
\end{clm}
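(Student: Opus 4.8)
The plan is to argue by contradiction, running Perelman's iterative point-picking scheme. Suppose no point $(p_*,\tau_*)$ as in the Claim exists. Starting from the given $(x_1,\tau_1)\in M(B)$ and writing $Q_k:=|\Rm|(x_k,\tau_k)$, I would inductively produce points $(x_{k+1},\tau_{k+1})$ as follows: since the conclusion fails for the candidate $(p_*,\tau_*)=(x_k,\tau_k)$, there is a space--time point violating it, i.e.\ some $(x_{k+1},\tau_{k+1})$ with $|\Rm|(x_{k+1},\tau_{k+1})>2Q_k$, with $\tau_k\le\tau_{k+1}\le\tau_k+\tfrac12 BQ_k^{-1}$, and with $d_{\tau_{k+1}}(x_{k+1},x_0)<d_{\tau_k}(x_k,x_0)+A'Q_k^{-1/2}$; among all such violating points I would choose one with $\tau_{k+1}$ as small as possible, so that for $\tau\in[\tau_k,\tau_{k+1})$ one still has $|\Rm|(\cdot,\tau)\le 2Q_k$ on the ball $B_\tau(x_0,R_k)$ with $R_k:=d_{\tau_k}(x_k,x_0)+A'Q_k^{-1/2}$. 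One first checks $(x_{k+1},\tau_{k+1})\in M(B)$ so the induction may continue: indeed $|\Rm|(x_{k+1},\tau_{k+1})(T-\tau_{k+1})>2Q_k\big(T-\tau_k-\tfrac12 BQ_k^{-1}\big)\ge Q_k(T-\tau_k)>B$, which in passing also gives $\tau_{k+1}<T$.

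Next comes the elementary book-keeping. The relation $Q_{k+1}>2Q_k$ forces $Q_k\ge 2^{k-1}Q_1\to\infty$, so $\sum_k Q_k^{-1/2}$ and $\sum_k Q_k^{-1}$ are convergent geometric series. Hence $\tau_k$ increases to a limit $\tau_\infty\le\tau_1+\sum_k\tfrac12 BQ_k^{-1}\le\tau_1+BQ_1^{-1}<T$, the last inequality because $(x_1,\tau_1)\in M(B)$ forces $Q_1(T-\tau_1)>B$; and telescoping the distance inequalities gives $d_{\tau_k}(x_k,x_0)<d_{\tau_1}(x_1,x_0)+A'\sum_{j<k}Q_j^{-1/2}=:D<\infty$ for all $k$, while $Q_k\to\infty$ by construction.

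The heart of the matter, and the step I expect to be the main obstacle, is to convert $Q_k\to\infty$ into a contradiction with the smoothness of the flow on $M\times[0,T)\supset M\times[0,\tau_\infty]$. Since $|\Rm|$ is continuous on that set, this is immediate once one knows the $x_k$ stay in a fixed compact subset $\mathcal{K}\subset M$ --- for then $Q_k=|\Rm|(x_k,\tau_k)\le\max_{\mathcal{K}\times[0,\tau_\infty]}|\Rm|<\infty$. But the bound $d_{\tau_k}(x_k,x_0)\le D$ only controls the moving-metric distance, so one must compare $d_\tau(\cdot,x_0)$ across each slab $[\tau_k,\tau_{k+1}]$, and this is exactly where Lemma~\ref{Pelem}(b) and the two numerical hypotheses enter. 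On $B_\tau(x_0,R_k)$ one has $|\Rm|\le 2Q_k$ for $\tau\in[\tau_k,\tau_{k+1})$, hence $\Ric(\cdot,\tau)\le(n-1)\cdot 2Q_k$ there; applying Lemma~\ref{Pelem}(b) with a comparison radius $r_0$ of order $\varepsilon Q_k^{-1/2}$ --- the hypothesis $(\varepsilon A')^2\ge 3/2$ being what guarantees $r_0<A'Q_k^{-1/2}\le R_k$, so the points in play stay inside the region where the curvature bound is available --- bounds the forward time-derivative of $d_\tau(x_0,\cdot)$ by $O\big((n-1)Q_k^{1/2}\big)$, and integrating over a slab of length $\le\tfrac12 BQ_k^{-1}$ produces a distance change of size $O\big((n-1)BQ_k^{-1/2}\big)$; the hypothesis $4(n-1)B\varepsilon\le 1$ then makes the accumulated total $\sum_k O\big((n-1)BQ_k^{-1/2}\big)$ bounded, once more by geometric decay. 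Chaining these comparisons back to time $\tau_1$ keeps all $x_k$ within a fixed $g(\tau_1)$-distance of $x_0$, hence in a compact set, and the contradiction follows. I expect the only delicate bookkeeping to be the barrier-sense reading of Lemma~\ref{Pelem} in this comparison step and the degenerate case $\tau_{k+1}=\tau_k$ (a violating point already present at time $\tau_k$, in which case all the $x_k$ live at one fixed time and no distance comparison is even needed), both dispatched by routine modifications.
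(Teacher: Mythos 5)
Your opening moves are the same as the paper's: the iterative construction of $(x_{k+1},\tau_{k+1})$ with $Q_{k+1}>2Q_k$, $d_{k+1}<d_k+A'Q_k^{-1/2}$, $\tau_{k+1}\in[\tau_k,\tau_k+\tfrac12 BQ_k^{-1}]$, the verification that $(x_{k+1},\tau_{k+1})\in M(B)$ via $Q_{k+1}(T-\tau_{k+1})-B>2(Q_k(T-\tau_k)-B)>0$, and the telescoping estimates keeping $\tau_k$ and $d_k=d_{\tau_k}(x_k,x_0)$ bounded. All of that matches.

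Where you diverge is the closing compactness step, and here you have both a misattribution and an over-complication. The hypotheses $4(n-1)B\epsilon\le 1$ and $(\epsilon A')^2\ge 3/2$, and the invocation of Lemma~\ref{Pelem}(b), are \emph{not} used in the paper's proof of this Claim; they are the whole content of the proof of Claim~2. For Claim~1 the paper needs nothing more than the standing assumption (stated just after Assumption~\ref{as}) that $\dertau{}g$ is bounded below on each compact time interval: this gives $g(\tau)\ge e^{-K(\tau-\tau_1)}g(\tau_1)$ for $\tau\in[\tau_1,\tau_1+BQ_1^{-1}]$, hence $d_{\tau_1}(x_k,x_0)\le e^{K(\tau_k-\tau_1)/2}\,d_{\tau_k}(x_k,x_0)$ stays bounded, so the $(x_k,\tau_k)$ live in a fixed compact subset of $M\times[0,T)$ on which $|\Rm|$ is continuous and thus bounded, contradicting $Q_k\to\infty$. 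No curvature comparison lemma and no point-picking refinement (your ``minimal $\tau_{k+1}$'') is required. Your alternative route, besides being unnecessary, is also incompletely justified as stated: Lemma~\ref{Pelem}(b) needs an upper Ricci bound on balls around \emph{both} endpoints whose distance is being differentiated, and you have not checked that the ball around the moving point $x_k$ stays inside the region where $|\Rm|\le 2Q_k$ is known; this is exactly the subtlety that the paper's Claim~2 handles with the maximal-$\tau'$ argument, and you would have to replicate it here. Replace that whole final paragraph with the one-line appeal to the standing time-derivative bound and the proof is complete.
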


\begin{proof}
If not, we can construct a sequence $\{(x_i, \t_i)\}_{i\in
\Z} \subset M(B)$ starting from $(x_1, \t_1) \in M(B)$
satisfying that
\begin{equation*}
Q_{i+1} > 2Q_i,\quad d_{i+1} < d_i + A^\prime
Q_i^{-1/2}\quad \hbox{and}\quad \t_i \le \t_{i+1} \le \t_i
+\tfrac{1}{2}BQ_i^{-1}
\end{equation*}
where we put $Q_i := |\Rm|(x_i, \t_i)$ and $d_i:=
d_{\t_i}(x_i, x_0)$. We see that $(x_{i+1}, \t_{i+1})$ lies
in $M(B)$ if $(x_i, \t_i)$ does. Indeed,
\begin{align*}
Q_{i+1}(T-\t_{i+1}) -B &> 2Q_i \( T- \t_i - \tfrac{1}{2}BQ_i^{-1}\) -B\\
&= 2\( Q_i(T-\t_i)- B \) >0.
\end{align*}
This implies that $\{(x_i, \t_i)\}_{i\in\Z} \subset M(B)$.

Then $Q_i > 2^{i-1}Q_1 \to \infty$ as $i \to\infty$, which
contradicts to that
\begin{equation*}
\t_i \le \t_1+ BQ_1^{-1} < T -\epsilon_1 <T\quad
\hbox{and}\quad d_i \le d_1 + 2A^\prime Q_1^{-1/2}.
\end{equation*}
Here $\epsilon_1>0$ is taken so that $|\Rm|(x_1, \t_1)(T-
\t_1 -\epsilon_1)
>B$. Hence the\break sequence $\{(x_i, \t_i)\}$ stops at finite
steps and the terminal one is the desired\break point~$(p_*,
\t_*)$.
\end{proof}

\begin{clm}
The point $(p_*, \t_*)$ just obtained satisfies the desired
property.
\end{clm}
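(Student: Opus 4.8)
The plan is to convert the curvature bound furnished by Claim~1, which is phrased in terms of the distance to $x_0$ measured at the \emph{running} time $\tau$, into the property asserted in Lemma~\ref{pointpick}, which is phrased in terms of the distance to $p_*$ measured at the \emph{fixed} time $\tau_*$. Write $Q := |\Rm|(p_*, \tau_*)$ and let $\Omega$ denote the set of $(y,\sigma)$ with $\tau_* \le \sigma \le \tau_* + \frac12 B Q^{-1}$ and $d_\sigma(y, x_0) < d_{\tau_*}(p_*, x_0) + A' Q^{-1/2}$; Claim~1 says precisely that $|\Rm| \le 2Q$ on $\Omega$. Hence it suffices to show that every $(x,\tau)$ with $d_{\tau_*}(x, p_*) < A Q^{-1/2}$ and $\tau_* \le \tau \le \tau_* + \frac12 B Q^{-1}$ belongs to $\Omega$, i.e.\ that $d_\tau(x, x_0)$ has not grown past $d_{\tau_*}(p_*, x_0) + A' Q^{-1/2}$.

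The mechanism controlling this growth is Lemma~\ref{Pelem}(b). First I would record that if, at some time $\sigma$, the balls $B_\sigma(x, Q^{-1/2})$ and $B_\sigma(x_0, Q^{-1/2})$ both lie in $\Omega$, then $|\Rm|(\cdot,\sigma) \le 2Q$ on them, so $\Ric(\cdot,\sigma) \le (n-1) c_n Q$ for a dimensional constant $c_n$, and Lemma~\ref{Pelem}(b) with $r_0 = Q^{-1/2}$ gives $\frac{d^+}{d\sigma} d_\sigma(x, x_0) \le c_n' Q^{1/2}$. Integrating over a time interval of length at most $\frac12 B Q^{-1}$ bounds the total growth of $\sigma \mapsto d_\sigma(x, x_0)$ by $M Q^{-1/2}$, where $M = M(n,B) := \frac12 c_n' B$. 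The parameter $A'$ of Claim~1 is constrained only by $A' \ge 2A$ and $(\epsilon A')^2 \ge \frac32$, both preserved under enlarging $A'$, so I am free to have also fixed $A' \ge A + M + 1$ at the outset.

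The conversion is then a continuity (bootstrap) argument on the compact ball $\bar B_{\tau_*}(p_*, A Q^{-1/2})$ (compact since the time-$\tau_*$ slice is complete). Let $\tau_1$ be the supremum of those $\sigma \in [\tau_*, \tau_* + \frac12 B Q^{-1}]$ for which $d_\tau(x, x_0) \le d_{\tau_*}(p_*, x_0) + (A + M) Q^{-1/2}$ holds for all $\tau \in [\tau_*, \sigma]$ and all $x \in \bar B_{\tau_*}(p_*, A Q^{-1/2})$. At $\sigma = \tau_*$ the triangle inequality gives $d_{\tau_*}(x, x_0) \le A Q^{-1/2} + d_{\tau_*}(p_*, x_0)$, strictly below the threshold since $M > 0$; by joint continuity and compactness $\tau_1 > \tau_*$. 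If $\tau_1 < \tau_* + \frac12 B Q^{-1}$, then for $\tau \le \tau_1$ the bound together with $A' \ge A + M + 1$ forces $B_\tau(x, Q^{-1/2})$ and $B_\tau(x_0, Q^{-1/2})$ into $\Omega$, so the growth estimate applies at each such $\tau$; integrating over $[\tau_*, \tau_1]$, which is strictly shorter than $\frac12 B Q^{-1}$, yields $d_{\tau_1}(x, x_0) < d_{\tau_*}(p_*, x_0) + (A + M) Q^{-1/2}$ uniformly in $x$, whence by continuity the defining condition persists past $\tau_1$, contradicting its definition. Hence $\tau_1 = \tau_* + \frac12 B Q^{-1}$, and for every admissible $(x,\tau)$ one has $d_\tau(x, x_0) \le d_{\tau_*}(p_*, x_0) + (A + M) Q^{-1/2} < d_{\tau_*}(p_*, x_0) + A' Q^{-1/2}$, i.e.\ $(x, \tau) \in \Omega$ and $|\Rm|(x, \tau) \le 2Q$, which is the asserted property.

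The one genuine difficulty is the circularity navigated in the last paragraph: the curvature bound is known only on $\Omega$, yet to certify via Lemma~\ref{Pelem}(b) that a point stays in $\Omega$ one needs the curvature bound on the slightly larger $Q^{-1/2}$-neighbourhoods of both $x$ and $x_0$. This is why the estimate must be organized as a continuity method and why $A'$ is taken a definite amount larger than $A$, enough to absorb both the distance growth $M Q^{-1/2}$ and the extra ball radius $Q^{-1/2}$; one such enlargement suffices precisely because the time interval has length only $\frac12 B Q^{-1}$, which makes the growth a fixed multiple of $Q^{-1/2}$. Note that no lower curvature bound enters, so the argument stays within the super Ricci flow framework.
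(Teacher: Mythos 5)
Your proof is correct and follows essentially the same approach as the paper: a continuity (bootstrap) argument driven by Lemma~\ref{Pelem}(b), which controls the growth of $\tau \mapsto d_\tau(\cdot, x_0)$ so that the region covered by Claim~1 is never left before time $\tau_* + \tfrac12 BQ^{-1}$. The only differences are cosmetic: you fix $r_0 = Q^{-1/2}$ and enlarge $A'$ by $M(n,B)+1$ to absorb the resulting growth, whereas the paper takes $r_0 = \epsilon A' Q^{-1/2}$ and uses the hypotheses $4(n-1)B\epsilon \le 1$, $(\epsilon A')^2 \ge 3/2$, $A' \ge 2A$ so the growth is at most $\tfrac12 A'Q^{-1/2}$; and you run the continuity argument uniformly over the compact ball $\bar B_{\tau_*}(p_*, AQ^{-1/2})$ rather than pointwise in $x$ as the paper does.
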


\begin{proof}
Take $x \in B_{\t_*}(p_*, AQ^{-1/2})$ and put $r_0 :=
\epsilon A^\prime Q^{-1/2}$. Let $\t^{\prime} \in[\t_*,
\t_*+ \frac{1}{2}BQ^{-1}]$ be the supremum of
$\t^{\prime\prime}$ such that
\begin{equation*}
|\Rm|(\cdot, \t) \le 2Q \hbox{ on } B_\t(x_0, r_0) \cup
B_\t(x, r_0) \hbox{ for all } \t \in[\t_0,
\t^{\prime\prime}].
\end{equation*}
It follows easily from the choice of $(p_*, \t_*)$ that
$\t^{\prime} > \t_*$ and $|\Rm| \le 2Q$ on $B_\t(x_0, r_0)$
for $\t \in [\t_*, \t_*+ \frac{1}{2}BQ^{-1}]$.

Applying Lemma~\ref{Pelem}(b) for $r_0 = \epsilon A^\prime
Q^{-1/2}$,
\begin{align*}
d_{\t^{\prime}}(x, x_0) - d_{\t_*}(x, x_0)
&\le 2(n-1)\(\tfrac{4}{3}\epsilon A^\prime Q^{1/2} +(\epsilon A^\prime)^{-1}Q^{1/2}\)(\t^{\prime}-\t_*)\\
&\le 2(n-1) \epsilon A^\prime BQ^{-1/2}\\
&\le \tfrac{1}{2}A^\prime Q^{-1/2}.
\end{align*}

Therefore, we have that
\begin{equation*}
d_{\t^{\prime}}(x, x_0) \le d_{\t_*}(x, p_*) +
d_{\t_*}(p_*, x_0) + \tfrac{1}{2}A^\prime Q^{-1/2} <
d_{\t_*}(p_*, x_0) + A^\prime Q^{-1/2}
\end{equation*}
and $\t^{\prime}= \t_* +\frac{1}{2}BQ^{-1}$. As $x \in
B_{\t_*}(p_*, AQ^{-1/2})$ is arbitrary, we conclude that
\begin{equation*}
|\Rm| \le 2Q \hbox{ on } B_{\t_*}(p_*, AQ^{-1/2})\times
[\t_*, \t_* +\tfrac{1}{2}BQ^{-1}].
\end{equation*}
This completes the proof of the lemma.
\end{proof}

\subsection{Ancient solutions have non-negative scalar curvature}

\begin{prop}[{\rm \cite[Proposition 2.1]{Chen}}]\label{ancient}
Any complete ancient super Ricci flow $(M^n, g(\t)), \t\in [0, \infty)$
satisfying $(\ref{evol})$ has non-negative trace of time derivative $2H := \tr \dertau{}g \ge0$.
\end{prop}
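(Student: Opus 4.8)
The plan is to run the scalar-curvature-type maximum principle of \cite{Chen}, adapted to $H$ in the backward time variable $\tau$. First I would rewrite (\ref{evol}) as an evolution inequality for $H$ alone: since $H=\tr_{g(\tau)}h$, the Cauchy--Schwarz inequality gives $|h|^{2}\ge\tfrac{1}{n}H^{2}$, so (\ref{evol}) yields
\begin{equation*}
\dertau{}H\;\le\;-\varDelta_{g(\tau)}H-\tfrac{2}{n}H^{2}\qquad\hbox{on }M\times[0,\infty).
\end{equation*}
I would also record, from the standing assumption that $\dertau{}g$ is bounded below on each compact time interval, that $H\ge-\tfrac{n}{2}K(\tau_{1},\tau_{2})$ on $M\times[\tau_{1},\tau_{2}]$ for every $[\tau_{1},\tau_{2}]\subset[0,\infty)$; in particular $H_{\min}(\tau):=\inf_{M}H(\cdot,\tau)$ is finite for each $\tau$.

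Next I would show that $\frac{d^{+}}{d\tau}H_{\min}(\tau)\le-\tfrac{2}{n}H_{\min}(\tau)^{2}$ in the barrier sense. On a closed manifold this follows at once from the displayed inequality and Hamilton's maximum principle, using that $\varDelta_{g(\tau)}H\ge0$ at a point where $H(\cdot,\tau)$ attains its spatial minimum. For a general complete $M$ the infimum need not be attained, and one must localize: using that the metrics $g(\tau)$ are mutually uniformly equivalent over a compact time interval and that $H$ is bounded below there, one tests the inequality against a cutoff of the distance function and controls the first- and zeroth-order error terms that arise. The hard part will be this localization; it is the only place where completeness of $M$ and the lower bound on $\dertau{}g$ really enter, and it is carried out exactly as in the proof of \cite[Proposition~2.1]{Chen}.

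Finally I would conclude by an ODE comparison, and it is here that ancientness is used. Suppose, for contradiction, that $H_{\min}(\tau_{1})=-A<0$ for some $\tau_{1}\ge0$, and let $w$ solve $\dot w=-\tfrac{2}{n}w^{2}$ with $w(\tau_{1})=-A$, i.e. $w(\tau)=-A\bigl(1-\tfrac{2A}{n}(\tau-\tau_{1})\bigr)^{-1}$, which exists on $[\tau_{1},\tau_{1}+\tfrac{n}{2A})$ and satisfies $w(\tau)\to-\infty$ as $\tau\nearrow\tau_{1}+\tfrac{n}{2A}$. Putting $z:=H_{\min}-w$ gives $z(\tau_{1})=0$ and $\frac{d^{+}}{d\tau}z\le-\tfrac{2}{n}(H_{\min}+w)z$, hence $z\le0$, so $H_{\min}(\tau)\le w(\tau)\to-\infty$ on $[\tau_{1},\tau_{1}+\tfrac{n}{2A})$. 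But the flow is defined on all of $[0,\infty)$, hence on the compact interval $[\tau_{1},\tau_{1}+\tfrac{n}{2A}]$, on which $H$ is bounded below by the first paragraph---a contradiction. Therefore $H_{\min}(\tau)\ge0$ for all $\tau$, i.e. $2H=\tr_{g(\tau)}\dertau{}g\ge0$. (For a general super Ricci flow on $[0,T)$ with $T<\infty$ the same reasoning would only give $H\ge-\tfrac{n}{2(T-\tau)}$, so ancientness is precisely what removes the defect.)
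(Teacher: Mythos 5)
Your argument is correct and shares the paper's core strategy: a Cauchy--Schwarz reduction of (\ref{evol}) to the Riccati-type inequality $\dertau{}H\le-\varDelta H-\tfrac{2}{n}H^{2}$, a cutoff-localized maximum principle borrowed from Chen's Proposition~2.1, and an ODE comparison driving $H_{\min}$ to $-\infty$ in finite time, contradicting ancientness. Where you and the paper diverge is in how the non-compactness of $M$ is tamed. You first invoke the paper's \emph{standing} assumption that $\dertau{}g$ is bounded below on compact time intervals in order to know that $H_{\min}(\t)$ is finite, and then localize only to justify the differential inequality for $H_{\min}$. The paper instead folds the localization into the \emph{object} itself: it sets $u(x,\t):=\phi\bigl(\frac{d_\t(x,x_0)-\frac{5}{3}(n-1)r_0^{-1}\t}{Ar_0}\bigr)H(x,\t)$ with $\phi$ a fixed compactly supported cutoff, so that $u_{\min}(\t)$ is automatically finite and attained, and runs the Riccati argument for $u_{\min}$ directly (absorbing the cutoff error terms into the $-\frac{1}{n}u_{\min}^2$ drift by the choice of $A$). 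This is why the paper can, and does, add the remark that Proposition~A.2 requires \emph{no} assumption on the lower bound of $\dertau{}g$ -- a stronger statement than what your route delivers. So your proof is valid under the standing hypotheses of the section, but it proves a slightly weaker result than the paper's, and it defers the genuinely delicate step (the localized differential inequality at a point where the infimum may not be attained) to Chen's paper rather than carrying it out; the paper's choice to work with $u=\phi H$ makes that step concrete and simultaneously removes the extra hypothesis.
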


Note that we have no assumption on the bound of
$\dertau{}g$ in Proposition~\ref{ancient}.

\begin{proof}
We give a proof by contradiction which is based on the
maximum principle argument. Assume that $H(x_0, \t_0)\kern-1pt<\kern-1pt 0$
for some $(x_0, \t_0)\kern-1.2pt\in\kern-1.2pt M\kern-1.2pt\times\kern-1.2pt[0, \infty)$. We may assume
that $\t_0=0$.

Let $\phi : \R \to [0, 1]$ is a non-increasing $C^2$
function satisfying that $\phi =1$ on $(-\infty, 1/2]$,
$\phi = 0$ on $[1, \infty)$ and $\phi^{\prime\prime}-
\frac{2\phi^{\prime2}}{\phi} \ge -C\sqrt\phi$ on $(-\infty,
1)$ for some $C>0$. Such a function can be constructed from
$\phi(s)=(s-1)^4$ for $s\le1$ near $s=1$.

Take sufficiently large $T_0>0$ so that $n|H|(x_0, 0)^{-1}
\le T_0$. Find $r_0>0$ such that $\Ric \le (n-1)r_0^{-2}$
on $B_\t(x_0, r_0)$ for all $\t\in [0, T_0]$ and fix $A>0$
so large enough that $|H|(x_0, 0) \ge nC(Ar_0)^{-2}$.

Put
\begin{equation*}
u(x, \t) := \phi\( \frac{d_\t(x, x_0) -
\frac{5}{3}(n-1)r_0^{-1}\t}{Ar_0} \) H(x, \t).
\end{equation*}
Then
\begin{align*}
\(\dertau{}+\varDelta\) u(x, \t)&= \phi^\prime \frac{(\dertau{}+\varDelta)d_{\t}(x, x_0) -\frac{5}{3}(n-1)r_0^{-1}}{Ar_0}H(x, \t)\\
&\quad+\phi\( \dertau{}+\varDelta \) H(x, \t)
+\phi^{\prime\prime}\frac{H(x, \t)}{(Ar_0)^2} +2\langle
\nabla\phi, \nabla H\rangle(x, \t).
\end{align*}

Let $u_{\min}(\t) := \min_{x\in M}u(x, \t)$ and assume that
$u_{\min}(\t_1) = u(x_1, \t_1) <0$ for some $\t_1\ge0$ and
$x_1\in M$. Then $d_{\t_1}(x_1, x_0) < Ar_0
+\frac{5}{3}(n-1)r_0^{-1}\t_1$ and $H(x_1, \t_1)<0$.
Furthermore, we have $\nabla u(x_1, \t_1)= 0$ and
$\varDelta u(x_1, \t_1) \ge0$.

If $d_{\t_1}(x_1, x_0)<r_0$, then $u=H$ near $(x_1, \t_1)$ and
\begin{align*}
\frac{d^{+}}{d\t}u_{\min}(\t_1)
&\le \liminf_{\t \searrow \t_1} \frac{u(x_1, \t)- u(x_1, \t_1)}{\t-\t_1}\\
&\le -\varDelta H(x_1, \t_1) -2|h|^2(x_1, \t_1)\\
&\le -\frac{2}{n}H^2(x_1, \t_1)=
-\frac{2}{n}u_{\min}(\t_1)^2.
\end{align*}

If $d_{\t_1}(x_1, x_0) \ge r_0$, by Lemma~\ref{Pelem}(a)
and that $2ab \le a^2+b^2$,
\begin{align*}
\frac{d^{+}}{d\t}u_{\min}(\t_1)
&\le -2\phi |h|^2(x_1, \t_1) +\(\phi^{\prime\prime}-\frac{2\phi^{\prime2}}{\phi}\) \frac{H(x_1, \t_1)}{(Ar_0)^2}\\
&\le -\frac{2}{n}\phi H^2(x_1, \t_1) -C\sqrt\phi \frac{H(x_1, \t_1)}{(Ar_0)^2}\\
&\le -\frac{2}{n}\phi H^2(x_1, \t_1)  + \frac{nC^2}{2(Ar_0)^4} +\frac{1}{2n}\phi H^2(x_1, \t_1)\\
&= -\frac{1}{n}u_{\min}(\t_1)^2  + \frac{nC^2}{2(Ar_0)^4} -\frac{1}{2n}u_{\min}(\t_1)^2.
\end{align*}

Since $|u_{\min}|(0) \ge |H|(x_0, 0) \ge nC(Ar_0)^{-2}$, we
know that $\frac{d^{+}}{d\t} u_{\min} \le
-\frac{1}{n}\break u_{\min}^2$ on $[0, T_0]$. Therefore,
\begin{equation*}
u_{\min}(\t) \le \frac{n}{nu_{\min}(0)^{-1}+\t}
\longrightarrow -\infty
\end{equation*}
as $\t \to n|u_{\min}|(0)^{-1} \le T_0$. This is the
desired contradiction.
\end{proof}

\section*{Acknowledgments}

The author expresses his gratitude to Professors Takao
Yamaguchi and Koichi Nagano for their comments and
discussions. He also would like to thank Professor Lei Ni
for his interest in this work and insightful comments. This
work was partially supported by Research Fellowships of the
Japan Society for the Promotion of Science for Young
Scientists.

\noindent {\sc Graduate School of Pure and Applied Sciences\\
University of Tsukuba\\
305-8571 Tsukuba\\ Japan}\\
\textit{E-mail address}: {\tt takumiy@math.tsukuba.ac.jp}

\begin{thebibliography}{99}

\bibitem{A} M. Anderson,
\textit{Convergence and rigidity of manifolds under Ricci
curvature bounds}, Invent. Math. \textbf{102}(2) (1990),
429--445.

\bibitem{A2} M. Anderson,
\textit{On the topology of complete manifolds of
nonnegative Ricci curvature}, Topology \textbf{29}(1)
(1990), 41--55.

\bibitem{CHI} H.-D. Cao, R. Hamilton and T. Ilmanen,
\textit{Gaussian densities and stability for some Ricci
solitons}, Preprint, arXiv:math/0404165.

\bibitem{CN} J. Carrillo and L. Ni,
\textit{Sharp logarithmic Sobolev inequalities on gradient
solitons and applications}, Preprint, arXiv:0806.2417.

\bibitem{ChCo} J. Cheeger and T. Colding,
\textit{On the structure of spaces with Ricci curvature
bounded below}, I. J. Differential Geom. \textbf{46}(3)
(1997), 406--480.

\bibitem{Chen} B.-L. Chen,
\textit{Strong uniqueness of the Ricci flow}, Preprint,
arXiv:0706.3081.

\bibitem{ChCh}
B. Chow and S.-C. Chu, \textit{A geometric approach to the
linear trace Harnack inequality for the Ricci flow}, Math. Res.
Lett. \textbf{3}(4) (1996),\break 549--568.

\bibitem{V2} B. Chow, S.-C. Chu, D. Glickenstein, C. Guenther, J. Isenberg, T. Ivey, D. Knopf, P. Lu, F. Luo and L. Ni,
\textit{The Ricci flow: techniques and applications. Part
I. Geometric aspects}, Mathematical Surveys and Monographs,
\textbf{135}, American Mathematical Society, Providence,
RI, 2007.

\enlargethispage{6pt}
\bibitem{CH} B. Chow and R. Hamilton,
\textit{Constrained and linear Harnack inequalities for
parabolic equations}, Invent. Math. \textbf{129}(2) (1997),
213--238.

\bibitem{CLN} B. Chow, P. Lu and L. Ni,
\textit{Hamilton's Ricci flow}, Graduate Studies in
Mathematics, \textbf{77}, American Mathematical Society,
Providence, RI; Science Press, New York, 2006.

\bibitem{EKNT} K. Ecker, D. Knopf, L. Ni and P. Topping,
\textit{Local monotonicity and mean value formulas for
evolving Riemannian manifolds}, J. Reine Angew. Math.
\textbf{616} (2008), 89--130.

\bibitem{FIN} M. Feldman, T. Ilmanen and L. Ni,
\textit{Entropy and reduced distance for Ricci expanders},
J. Geom. Anal. \textbf{15}(1) (2005), 49--62.

\bibitem{Ha-Ha} R. Hamilton, \textit{The Harnack estimate for the Ricci flow}, J. Differential Geom. \textbf{37}(1) (1993), 225--243.

\bibitem{Ha-Form} R. Hamilton,
\textit{The formation of singularities in the Ricci flow},
Surveys in Differential Geometry, \textbf{II}, Cambridge,
MA, 1993, 7--136, Int. Press, Cambridge, MA, 1995.

\bibitem{Ha-Comp} R. Hamilton,
\textit{A compactness property for solutions of the Ricci
flow}, Amer. J. Math. \textbf{117}(3) (1995), 545--572.

\bibitem{IK} T. Ilmanen and D. Knopf,
\textit{A lower bound for the diameter of solutions to the
Ricci flow with nonzero} $H\sp 1(M\sp n;\Bbb R)$, Math.
Res. Lett. \textbf{10}(2--3) (2003), 161--168.

\bibitem{KL} B. Kleiner and J. Lott,
\textit{Notes on Perelman's papers}, Geom. Topol.
\textbf{12}(5) (2008), 2587--2855.

\bibitem{Li} P. Li,
\textit{Large time behavior of the heat equation on
complete manifolds with nonnegative Ricci curvature}, Ann.
Math. (2) \textbf{124}(1) (1986), 1--21.

\bibitem{Lo} J. Lott,
\textit{Optimal transport and Perelman's reduced volume},
arXiv:0804.0343, Calc. Var. Partial
Differential Equations, to appear.

\bibitem{MT} R. McCann and P. Topping,
\textit{Ricci flow, entropy and optimal transportation},
Preprint, Amer. J. Math, to appear.

\bibitem{M} F. Morgan,
\textit{Manifolds with density}, Notices Amer. Math. Soc.
\textbf{52}(8) (2005), 853--858.

\bibitem{Ni-Ent} L. Ni,
\textit{The entropy formula for linear heat equation}, J.
Geom. Anal. \textbf{14}(1) (2004), 87--100; Addenda: J.
Geom. Anal. \textbf{14}(2) (2004), 369--374.

\bibitem{Ni-Mean}
L. Ni, \textit{Mean value theorems on manifolds}, Asian J.
Math. \textbf{11}(2) (2007), 277--304.

\bibitem{Ni-LYH} L. Ni,
\textit{A matrix Li--Yau--Hamilton estimate for K\"{a}hler-Ricci
flow},\break J. Differential Geom. \textbf{75}(2) (2007),
303--358.

\bibitem{P} G. Perelman,
\textit{The entropy formula for the Ricci flow and its
geometric applications}, Preprint, arXiv:math/0211159.

\bibitem{RS}
M.-K. von Renesse and K.-T. Sturm, \textit{Transport
inequalities, gradient estimates, entropy, and Ricci
curvature}, Comm. Pure Appl. Math. \textbf{58}(7) (2005),
923--940.

\bibitem{WW} G. Wei and W. Wylie,
\textit{Comparison geometry for the Bakry--Emery Ricci
tensor}, Preprint, arXiv:0706.1120.

\bibitem{W} W. Wylie,
\textit{Complete shrinking Ricci solitons have finite
fundamental group}, Proc. Amer. Math. Soc. \textbf{136}(5)
(2008), 1803--1806.

\bibitem{Ye} R. Ye,
\textit{On the $l$-function and the reduced volume of
perelman I}, Trans. Amer. Math. Soc. \textbf{360}(1)
(2008), 507--531.

\bibitem{Yo} T. Yokota,
\textit{Curvature integrals under the Ricci flow on
surfaces}, Geom. Dedicata. \textbf{133}(1) (2008),
169--179.

\bibitem{Z} Z.-H. Zhang,
\textit{On the completeness of gradient Ricci solitons},
Preprint, arXiv:0807.1581.

\end{thebibliography}
\end{document}